\newcommand{\dist}{\text{dist}} 
\newcommand{\diam}{\text{diam}}
\newcommand{\supp}{\text{{\rm supp }}}
\newcommand{\R}{{\mathbb R}} 
\def \p {\partial}
\def\p{\rho}
\theoremstyle{plain}
\newtheorem{theorem}{Theorem}[section]
\newtheorem{cor}[theorem]{Corollary}
\newtheorem{lem}[theorem]{Lemma}
\newtheorem{prop}[theorem]{Proposition}
\newtheorem{defi}[theorem]{Definition}
\newtheorem{remark}[theorem]{Remark}
\providecommand{\bysame}{\makebox[3em]{\hrulefill}\thinspace}
\begin{document}

\setcounter{equation}{0}










\title[The Reflector Problem and the inverse square law]
{The Reflector Problem and the inverse square law}
\author[C. E. Guti\'errez and A. Sabra]
{Cristian E. Guti\'errez\\
 and \\
Ahmad Sabra}
\begin{abstract}
We introduce a model to design reflectors that take into account the inverse square law for radiation.
We prove existence of solutions  both in the near and far field cases, when the input and output energies are prescribed.
\end{abstract}
\thanks{\today.}
\thanks{Authors partially supported by NSF grant DMS--1201401.
We thank the Karlsruhe Institute of Technology, Germany, for the support during March and April 2013 where this paper was completed. It is also a pleasure to thank Professors Christian Koos and Wolfgang Reichel for their kind hospitality.}
\address{Department of Mathematics\\Temple University\\Philadelphia, PA 19122}
\email{gutierre@temple.edu, ahmad.sabra@temple.edu}

\maketitle


\setcounter{equation}{0}
\section{Introduction}

It is known that the intensity of radiation is inversely proportional to the square of the distance from the source.
In particular, at large distances from the source, the radiation intensity is distributed over larger surfaces and therefore
the intensity per unit area decreases as the distance from the surface to the source increases.

In this paper, we solve a problem in radiometry involving the inverse square law, see e.g., \cite{book:born-wolf}[Section 4.8.1, formula (10)] and \cite[Chapter 4]{ross:introtoradiometryandphotometry}.
We begin explaining the concepts needed to pose the problem.
Suppose that $f(x)$ denotes the radiant intensity in the direction $x\in S^{2}$, measured in Watts per steradian; in other words, we are assuming the radiation emanates from the origin in a non isotropic fashion.
Here $S^2$ denotes the spherical surface of radius one centered at the origin in $\R^3$.
If
we have a piece of surface $\Omega\subseteq S^{2}$, then the total amount of energy received at $\Omega$, or in other words the radiant flux through $\Omega$, is given by the surface integral
\[
\Phi=\int_\Omega f(x)\,dx\footnote{The units for this quantity are Watts because the units for $\Omega\subseteq S^2$ are considered non dimensional units, i.e., $\Omega$ is measured in steradians.}.
\]
The {\it irradiance} $E$ is the amount of energy or radiant flux incident on a surface per unit area; it is measured in $W/m^2$.
 We notice that in this case radiation flows perpendicularly through $\Omega$.
If the piece of surface $\Omega\subseteq S^2$ is dilated by $r$ with respect to the origin, then the new surface $r\Omega$ is contained in the sphere of radius
$r$ centered at the origin. If there is no loss, then the radiant flux on $\Omega$ and the
radiant flux on $r\Omega$ must be the same. Therefore, if $r\Omega$ has a larger area than $\Omega$, i.e., $r>1$, the average amount of energy per unit area at $r\Omega$ must be smaller than the average amount of energy per unit area at $\Omega$.
Likewise if $r<1$, then the energy per unit area in $r\Omega$ gets larger.
Let $\Phi_1,\Phi_r$ be the radiant fluxes at $\Omega$ and $r\Omega$, respectively.
As we said, by energy conservation $\Phi_1=\Phi_r$. If $|\Omega|$ denotes the surface area of $\Omega$, we have that the irradiance in $\Omega$ is $E_1=\dfrac{\Phi_1}{|\Omega |}$,
and the irradiance at $r\Omega$ is $E_r= \dfrac{\Phi_r}{|r\Omega |}$. Therefore
\begin{equation}\label{eq:inversesquarelaw}
E_r=\dfrac{\Phi_1}{|r\Omega|}=\dfrac{\Phi_1}{r^2|\Omega|}=\dfrac{E_1}{r^2}
=\dfrac{1}{|\Omega|}\int_\Omega \dfrac{f(x)}{r^2}\,dx.
\end{equation}
Suppose we break the domain $\Omega=\cup_{j=1}^N\Omega_j$, with $\Omega_j$ disjoint, and such that
$f$ is approximately equal to a constant $f_j$ on $\Omega_j$. Then the radiant flux $\Phi$ received at $\Omega$ satisfies $\Phi=\sum_{j=1}^N \Phi_j$ where $\Phi_j$ is the radiant flux at $\Omega_j$.
We have $\Phi_j=(\Phi_j)_r$ and the irradiance $E_j$ in $\Omega_j$ is $\dfrac{\Phi_j}{|\Omega_j |}$, and the irradiance $E_j^r$ in $r\Omega_j$ is $\dfrac{(\Phi_j)_r}{|r\Omega_j |}$. So, as before,
\[
E_j^r=\dfrac{E_j}{r^2}=\dfrac{1}{|\Omega_j|}\int_{\Omega_j} \dfrac{f(x)}{r^2}\,dx\approx \dfrac{f_j}{r^2}=\dfrac{f(x_j)}{r^2};\quad x_j\in \Omega_j.
\]
We then  have that
\[
\int_\Omega f(x)\,\dfrac{1}{r^2}\,dx
=
\sum_{j=1}^{N}\int_{\Omega_j} f(x)\,\dfrac{1}{r^2}\,dx
\approx
\sum_{j=1}^{N} f(x_j)\,\dfrac{1}{r^2}\,|\Omega_j|
\approx
\sum_{j=1}^{N} E_j^r\,|\Omega_j|
\approx
\int_\Omega E^r(x)\,dx,
\]
where $E^r(x)$ represents the irradiance over an infinitesimal patch over $r\Omega$ around the direction $x$.


Now, instead of considering a patch of surface on $r\Omega$, we look at a sufficiently smooth surface given parametrically by $\rho(x)x$ for $x$ on a piece of $S^2$.
We fix a unit direction $y$, and take an infinitesimally small neighborhood of $y$ in $S^2$, say $B_\varepsilon(y)$.
We want to calculate approximately the area of the piece of surface described by $A(y)=\{\rho(x)x:x\in B_\varepsilon(y)\}$,
and the irradiance over $A(y)$.
The tangent plane $T$ at $\rho(y)y$ has normal unit $\nu(y)$. The region on the tangent plane for $x\in B_\varepsilon(y)$
is denoted by $T(y)=\{z\in T:z/|z|\in B_\varepsilon(y)\}$. Let us calculate the area of $T(y)$.
Notice $A(y)$ and $T(y)$ have approximately the same area.
Consider the plane $T'$ perpendicular to the direction $y$ passing through $\rho(y)y$,
and denote by $T'(y)$ the region on that plane for $x\in B_\varepsilon(y)$, i.e.,
$T'(y)=\{z\in T':z/|z|\in B_\varepsilon(y)\}$.
We have that
\[
|T(y)|\approx \dfrac{|T'(y)|}{ y\cdot \nu(y)}.
\]
Therefore
\[
|A(y)|\approx \dfrac{|T'(y)|}{ y\cdot \nu(y)}.
\]
On the other hand, by similarity
\[
|T'(y)|\approx \rho(y)^2 |B_\varepsilon(y)|,
\]
and we then obtain that
\[
|A(y)|\approx \dfrac{\rho(y)^2 |B_\varepsilon(y)|}{ y\cdot \nu(y)}.
\]
The irradiance on the infinitesimal area $A(y)$ then equals
\[
\dfrac{\text{radiant flux over $A(y)$}}{|A(y)|}\approx
\dfrac{\int_{B_\varepsilon(y)} f(x)\,dx}{\dfrac{\rho(y)^2 |B_\varepsilon(y)|}{ y\cdot \nu(y)}}
\approx
\dfrac{f(y)\, y\cdot \nu(y)}{\rho(y)^2}.
\]
%
Therefore, if we have a piece of surface $\sigma$ in $\R^3$ parameterized by a function $\rho(x)x$ for $x \in \Omega$ with $\Omega\subseteq S^2$, and we have radiant intensity $f(x)$ for each $x\in \Omega$,
then based on the above considerations we introduce the following quantity, measuring
the amount of irradiance at the patch of surface $\sigma(\Omega)$, by the surface integral
\begin{equation}\label{enu}
\int_\Omega f(x)\, \dfrac{x\cdot \nu_\rho(x)}{\rho(x)^2}\,dx,
\end{equation}
where $\nu_\rho(x)$ is the outer unit normal to the surface $\sigma$ at the point $\rho(x)x$.
Notice that if $\sigma$ is far from the source, then
$\rho$ is very large and therefore the irradiance on $\sigma$ is very small. Formula \eqref{enu} generalizes
the inverse square law in radiometry from \cite{book:born-wolf}[Section 4.8.1, formula (10)].

The problem we propose and solve in this paper is the following.
Suppose $f$ is a positive function given in $\Omega\subseteq S^2$
and $\eta$ is a Radon measure on a bounded set $D$ contained on a surface in $R^3$,
and $\dist(0,D)>0$.
Suppose radiation emanates from the origin with radiant intensity $f(x)$ in each direction $x\in \Omega$.
We want to find a reflector surface $\sigma$ parameterized by $\rho(x)x$ for $x\in \Omega$ such that the radiation is reflected off by $\sigma$ into the set $D$ and such that the irradiance received on each patch of $D$ has at least measure  $\eta$. In other words, we propose to find $\sigma$ such that
\begin{equation} \label{eq:pbequ}
\int_{\tau_\sigma (E)}f(x)\, \dfrac{x\cdot \nu(x)}{\rho(x)^2}\,dx\geq \eta(E),
\end{equation}
for each $E\subseteq D$, where the set $\tau_\sigma(E)$ is the collection of directions $x\in \Omega$ that $\sigma$ reflects off in $E$.
We ask the reflector to cover all the target $D$, that is, $\tau_\sigma(D)=\Omega$. In particular, from \eqref{eq:pbequ} we need to have
\begin{equation} \label{eq:admissiblereflector}
\int_{\Omega}f(x)\, \dfrac{x\cdot \nu(x)}{\rho(x)^2}\,dx\geq \eta(D),
\end{equation}
and we say in this case that the reflector $\sigma$ is admissible.
Since $f$ and $\eta$ are given but we do not know the reflector $\sigma$, we do not know a priori if \eqref{eq:admissiblereflector} holds. However, assuming that
the input and output energies satisfy
\[
\int_{\Omega}f(x)\,dx\geq \dfrac{1}{C}\eta(D),
\]
where $C$ is an appropriate constant depending only on the distance between the farthest point on the target and the source, and
from how close to the source we want to place the reflector,
we will prove that there exists a reflector $\sigma$ satisfying \eqref{eq:pbequ}; see condition \eqref{probcond} and Theorems \ref{discretethmgeq} and
\ref{thm:solutionforgeneralmeasure}.
In particular, we will see that if the target $D$ has a point very far away from the source, then the constant $1/C$ will be very large and therefore, for a given $\eta$ we will need more energy $f$ at the outset to prove the existence of a reflector satisfying
\eqref{eq:pbequ}.
We will also see that, in general, for each fixed point $P_0$ in the support of the measure $\eta$,
we construct a reflector that overshoots energy only at $P_0$, that is, for each set $E\subseteq \Omega$ such that $P_0\notin E$ we have equality in \eqref{eq:pbequ}; see Theorems
\ref{discretethmgeq}[parts (2) and (3)] and Theorem \ref{thm:solutionforgeneralmeasure}.
In Subsections \ref{sub:discreteovershooting} and \ref{sub:overshooting}, we show that it is possible to construct a reflector that minimizes the overshooting at $P_0$, that is unique in the discrete case, see Theorem \ref{uniqueness}.

To solve our problem, we introduce the notion of reflector and reflector measure with supporting ellipsoids of revolution, and show that \eqref{eq:pbequ} makes sense in terms of Lebesgue measurability,
Section \ref{sec:reflectorsandreflectormeasures}. With this definition, reflectors are concave functions and therefore differentiable a.e., so the normal $\nu(x)$ exists a.e..
To obtain the $\sigma$-additivity of the reflector measure given in Proposition \ref{mumeasure}, we need to assume that the target $D$ is contained in a hyperplane or $D$ is countable. This is needed in the proof of Lemma \ref{lem:aleksandrovforreflectors} and Remark \ref{rmk:countabletarget}, a result that might fail otherwise, see Remark \ref{rmk:counterexampletoaleksandrov}.

With this definition of the reflector, the reflected rays might cross the reflector to reach the target, in other words, the reflector might obstruct the target in certain directions.
This physical issue can be avoided by assuming that the
supporting ellipsoids used in the definition of reflector have the target contained in their interiors.
Another kind of physical obstruction might happen when the target obstructs the incident rays in their way to the reflector.
All of these are discussed and illustrated in Subsection \ref{subset:physicalvisibilityissues}.

In Section \ref{sec:ellipsoids}, we show properties of ellipsoids of revolution that are needed in the proofs of our results.
In Section \ref{sec:Soldis}, the problem is first solved in the discrete case, that is, when the measure $\eta$ is concentrated on a finite target $D$. The solution for a general measure $\eta$
is given in Section \ref{sec:solutionforgeneralmeasure}.
Section \ref{sec:NearFieldEquation} contains the pde for the problem which is a Monge-Amp\`ere type equation.

In Section \ref{sec:Farfield}, we consider and solve the problem in the far field case, that is,
when $D$ is replaced by a set of directions in $\Omega^* \subseteq S^2$.

We finish this introduction mentioning results in the literature
that are relevant for this work and put our results in perspective.
The reflector problem in the far field case has been considered by L. Caffarelli, V. Oliker and X-J. Wang, see \cite{Caffarelli-Oliker:weakantenna}, \cite{Wang:antenna}, and \cite{Wang:antennamasstransport}. The near field case is in \cite{kochengin-oliker:nearfieldreflector} where the notion of reflector defined with supporting ellipsoids is introduced. In all these papers it is assumed that $\int_\Omega f(x)\,dx=\eta(\text{target})$, and the model does not take into account the inverse square law.
For the far field refraction, models taking into account the loss of energy due to internal reflection are considered in \cite{gutierrez-mawi:refractorwithlossofenergy}.

We believe this paper is the first contribution to the problem of constructing a reflector that takes into account how far it is from the source.

\setcounter{equation}{0}
\section{Ellipsoids}\label{sec:ellipsoids}

Let $\mathcal L$ be a plane in $\R^3$ and $P\in \R^3$ be a point not in $\mathcal L$.
An ellipsoid of revolution having a focus at $P$ is the locus of the points $X$ such that $|X-P|=\varepsilon \,\dist(X,\mathcal L)$, where $0<\varepsilon<1$
is a constant. $\mathcal L$ is called the directrix plane.
Notice that this determines the other focus which might not be the origin. Let $f:=\dist(P,\mathcal L)$ be the {\it focal parameter} of the ellipsoid.
If $O$ is the origin in $\R^3$, $P$ is an arbitrary point, and $c>|O-P|:=OP$, then
the ellipsoid of revolution $E(P)$ with foci $O$ and $P$, is given by
$\{X:|X|+|X-P|=c\}$.
The polar equation of such ellipsoid is
\begin{equation}\label{def:ofpolarradius}
\rho(x)=\dfrac{\dfrac{c^2-OP^2}{2c}}{1-\dfrac{OP}{c}\, x\cdot m},
\end{equation}
where $m=\dfrac{\overrightarrow{OP}}{OP}$, $x\in S^2$.
The {\it eccentricity} of $E(P)$ is by definition $\varepsilon=\dfrac{OP}{c}$, and we have $0<\varepsilon<1$.
We show that $\mathcal L$ is the plane perpendicular to the line with direction $m$ and having distance to the origin
 $c\dfrac{1+\varepsilon^2}{2\,\varepsilon}$. Indeed, let $M\in E(P)$ be the point where the distance between $\mathcal L$ and $E(P)$ is attained. We have $PM=\varepsilon \, \dist(M,\mathcal L)$, and $OP+2\,PM=c$, so $PM=\dfrac{c-OP}{2}=\dfrac{c}{2}(1-\varepsilon)$.
Then
 \begin{align*}
\dist(O,\mathcal L)&=OM+\dist(M,\mathcal L)=OP+PM+\dist(M,\mathcal L)\\
&= OP + \left(1+\dfrac{1}{\varepsilon}\right)PM=
c\,\varepsilon+\left(1+\dfrac{1}{\varepsilon}\right) \dfrac{c}{2}(1-\varepsilon)= c\,\dfrac{1+\varepsilon^2}{2\,\varepsilon},
\end{align*}
as desired.
We then have the following formula for the focal parameter:
$$
f=\dist(P,\mathcal L)= \dist(O,\mathcal L)-OP= \dfrac{c(1-\varepsilon^2)}{2\,\varepsilon}.
$$
We also have that
\begin{equation}\label{eq:formulaforthefocalparameter}
d:=\dfrac{c^2-OP^2}{2c}=f\,\varepsilon,
\end{equation}
where $d$ represents the radius of the circle obtained by intersecting the ellipsoid with a plane through $O$ perpendicular to $\overrightarrow{OP}$.

From the definition of $d$ we get $ 2cd=c^2- OP^2$, and dividing both sides of this equation by $c^2$ yields
$\varepsilon^2+\dfrac{2d}{OP}\varepsilon -1 =0$.
Solving for $\varepsilon$ we obtain:
\begin{equation} \label{epsequ}
\varepsilon=\sqrt{1+\dfrac{d^2}{OP^2}}-\dfrac{d}{OP}.
\end{equation}
Therefore, we shall denote by $E_d(P)$ the ellipsoid of foci $O$ and $P$ having polar equation
\begin{equation} \label{ellipse}
\rho_d (x)=\dfrac{d}{1-\varepsilon\, x \cdot m}.
\end{equation}

%
%
%
%
%
The ellipsoid is very eccentric if $\varepsilon\approx 1$, this means that $d$ is very close to zero and the ellipsoid degenerates to a segment. 
We will work with ellipsoids that are not too eccentric, that is, with $d/OP\geq \delta$, with $\delta>0$ arbitrarily fixed.
In other words, the eccentricity will arbitrary but controlled in advance.
However, if we take into account physical issues presented in Subsection \ref{subset:physicalvisibilityissues}, then we need to choose $\delta$ sufficiently large, see \eqref{eq:assumptionondeltatoavoidobstruction}.

We have the following simple proposition that will be used frequently in the paper.
\begin{prop}\label{propineq}
Let $O$ be the origin in $\R^3$ and $P\neq O$.
Fix $\delta>0$ and consider an ellipsoid $E_d(P)$ with $d \geq \delta\,OP$.
Then there exists a constant $0<c_\delta<1$, independent of $P$, such that $E_d(P)$ has eccentricity $\varepsilon\leq c_\delta$ and we have
\begin{equation}\label{impineq}
\dfrac{d}{1+c_{\delta}} \leq \min_{x \in S^2} \rho_d(x) \leq \max_{x \in S^2} \rho_d(x)\ \leq \dfrac{d}{1-c_\delta}.
\end{equation}
\end{prop}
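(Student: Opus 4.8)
The plan is to treat the two assertions separately, since both follow directly from the explicit formulas \eqref{epsequ} for the eccentricity and \eqref{ellipse} for the polar radius.

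First I would control the eccentricity. Setting $t=d/OP$, the hypothesis $d\geq \delta\,OP$ is exactly $t\geq\delta$, and formula \eqref{epsequ} expresses $\varepsilon$ as a function of $t$ alone, namely $\varepsilon(t)=\sqrt{1+t^2}-t$. Differentiating gives $\varepsilon'(t)=t/\sqrt{1+t^2}-1<0$, so $\varepsilon(t)$ is strictly decreasing; hence $\varepsilon\leq \varepsilon(\delta)=\sqrt{1+\delta^2}-\delta=:c_\delta$. Rationalizing, $c_\delta=\left(\sqrt{1+\delta^2}+\delta\right)^{-1}$, and since the denominator exceeds $1$ we get $0<c_\delta<1$. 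Crucially, $c_\delta$ depends only on $\delta$ and not on $P$, as required.

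Next I would bound $\rho_d$. Because $x,m\in S^2$ we have $x\cdot m\in[-1,1]$, so the denominator in \eqref{ellipse} satisfies $1-\varepsilon\,x\cdot m\in[1-\varepsilon,\,1+\varepsilon]$, with the two endpoints attained at $x=m$ and $x=-m$. Since $\varepsilon<1$ the denominator stays positive, and the extremal values are $\min_{x\in S^2}\rho_d(x)=d/(1+\varepsilon)$ and $\max_{x\in S^2}\rho_d(x)=d/(1-\varepsilon)$. Inserting the eccentricity bound $\varepsilon\leq c_\delta$, which yields $1+\varepsilon\leq 1+c_\delta$ and $1-\varepsilon\geq 1-c_\delta>0$, gives $\min_{x\in S^2}\rho_d(x)\geq d/(1+c_\delta)$ and $\max_{x\in S^2}\rho_d(x)\leq d/(1-c_\delta)$, which is \eqref{impineq}.

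I do not expect a serious obstacle: the proposition reduces to the monotonicity of the explicit eccentricity formula together with the fact that $x\cdot m$ ranges over $[-1,1]$. The only point deserving a moment's care is the uniformity of the constant in $P$, and this is precisely why it matters that \eqref{epsequ} depends on $d$ and $OP$ only through their ratio $t$; the single inequality $\delta\leq t$ then pins down $c_\delta$ once and for all.
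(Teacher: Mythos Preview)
Your proof is correct and follows essentially the same route as the paper. The only cosmetic difference is in how the eccentricity bound is obtained: you invoke \eqref{epsequ} directly and use that $t\mapsto\sqrt{1+t^2}-t$ is decreasing, whereas the paper substitutes $d\geq\delta\,OP$ into the relation $OP=\dfrac{2\varepsilon d}{1-\varepsilon^2}$ to get the quadratic inequality $\varepsilon^2+2\delta\varepsilon-1\leq 0$ and then solves it; both arrive at the same $c_\delta=\sqrt{1+\delta^2}-\delta$, and the bound on $\rho_d$ is handled identically.
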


\begin{proof}
From the definitions of $d$ and $\varepsilon$ we have
\begin{equation} \label{eqop}
OP=\dfrac{2\varepsilon d}{1-\varepsilon^2}.
\end{equation}
Since $d\geq \delta \,OP=\dfrac{2\,\delta\,\varepsilon \,d}{1-\varepsilon^2}$,
we have $\varepsilon^2+2\delta\varepsilon-1 \leq 0$.
Solving this quadratic inequality and keeping in mind that $\varepsilon > 0$ yields
\begin{equation}\label{epsbdd}
\varepsilon \leq -\delta+\sqrt{1+\delta^2}:=c_\delta<1.
\end{equation}
Then using equation (\ref{ellipse}) and noting that $1-\varepsilon \leq 1- \varepsilon\,  x\cdot m \leq 1+\varepsilon$,
we obtain
$$\dfrac{d}{1+c_{\delta}} \leq \dfrac{d}{1+\varepsilon} \leq \rho_d(x)\leq \dfrac{d}{1-\varepsilon}\leq \dfrac{d}{1-c_\delta}$$ for all $x \in S^2$.
\end{proof}

We recall the following proposition borrowed from \cite[Lemma 6]{kochengin-oliker:nearfieldreflector}.
\begin{prop}\label{propmonot}
For ellipsoids of fixed foci $O$ and $P$, the eccentricity $\varepsilon$ is a strictly decreasing function of $d$, and for each fixed $ x$, $\rho_d(x)$ is strictly increasing function of $d$.
\end{prop}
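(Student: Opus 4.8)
The plan is to treat the two monotonicity claims separately, working directly from the explicit formulas \eqref{epsequ} and \eqref{ellipse} with $OP$ held fixed. For the eccentricity, I would start from \eqref{epsequ} and rationalize,
\[
\varepsilon=\sqrt{1+\dfrac{d^2}{OP^2}}-\dfrac{d}{OP}=\dfrac{1}{\sqrt{1+\dfrac{d^2}{OP^2}}+\dfrac{d}{OP}},
\]
so that $\varepsilon(d)$ is the reciprocal of a quantity that is manifestly strictly increasing in $d$ for $d>0$; hence $\varepsilon$ is strictly decreasing in $d$. (Equivalently, one differentiates $\sqrt{1+t^2}-t$ in $t=d/OP$ and observes that the derivative $t/\sqrt{1+t^2}-1$ is negative.)

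For the radius, fix $x$ and set $\mu=x\cdot m\in[-1,1]$. The subtle point is that in $\rho_d(x)=d/(1-\varepsilon\mu)$ both the numerator and, through $\varepsilon(d)$, the denominator vary with $d$, and since $\varepsilon$ decreases while $d$ increases the two effects must be reconciled. I would differentiate directly: writing $\varepsilon'=d\varepsilon/dd$,
\[
\dfrac{d}{dd}\rho_d(x)=\dfrac{1-\varepsilon\mu+d\,\varepsilon'\,\mu}{(1-\varepsilon\mu)^2}.
\]
Because $\mu\in[-1,1]$ and $0<\varepsilon<1$, the quantity $1-\varepsilon\mu$ lies between $1-\varepsilon$ and $1+\varepsilon$, both positive, so the denominator is positive and the sign is governed by the numerator. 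Substituting $\varepsilon'=(1/OP)\bigl(t/\sqrt{1+t^2}-1\bigr)$ with $t=d/OP$ and simplifying, the numerator collapses to
\[
1-\dfrac{\mu}{\sqrt{1+\dfrac{d^2}{OP^2}}},
\]
which is strictly positive since $|\mu|\le 1<\sqrt{1+d^2/OP^2}$. Hence $\rho_d(x)$ is strictly increasing in $d$ for every fixed $x$.

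An alternative and arguably cleaner route reparametrizes by $c$ rather than $d$. Using $\varepsilon=OP/c$ and $d=(c^2-OP^2)/2c$, the polar equation \eqref{ellipse} becomes $\rho(x)=\dfrac{c^2-OP^2}{2(c-OP\,\mu)}$; differentiating in $c$ produces the numerator $c^2-2\,OP\,\mu\,c+OP^2$, whose discriminant $4\,OP^2(\mu^2-1)\le 0$ forces it to be strictly positive for $c>OP$, and since $d$ is strictly increasing in $c$ the claim follows. The main obstacle in either approach is exactly this second part: one must verify that after the algebraic cancellation what remains is provably positive for all $\mu\in[-1,1]$, including the delicate endpoint $\mu=1$, where positivity relies on the strict inequality $c>OP$ (equivalently $0<\varepsilon<1$) rather than $c\ge OP$, so that no choice of direction $x$ can reverse the monotonicity.
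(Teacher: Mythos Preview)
Your argument is correct. Both routes you give --- the direct differentiation of $\rho_d(x)$ in $d$ via \eqref{epsequ}, and the reparametrization by $c$ --- are valid, and the algebraic simplification of the numerator to $1-\mu/\sqrt{1+d^2/OP^2}$ (respectively, to the quadratic $c^2-2\,OP\,\mu\,c+OP^2$ with nonpositive discriminant) is accurate and settles the sign for all $\mu\in[-1,1]$.

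As for comparison: the paper does not supply its own proof of this proposition. It simply records the statement and attributes it to \cite[Lemma 6]{kochengin-oliker:nearfieldreflector}. So your write-up is more self-contained than what appears here; there is nothing in the paper to compare your method against beyond the citation.
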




\begin{prop}\label{propnorm}
The outer unit normal $\nu_d$ to the ellipsoid $E_d(P)$ at the point $\rho_d(x)x$ is given by
\begin{equation}\label{nueq}
\nu_d(x)=\dfrac{x-\varepsilon\, m}{|x-\varepsilon\, m|}.
\end{equation}

\end{prop}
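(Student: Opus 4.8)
The plan is to realize $E_d(P)$ as a level set and read the normal off a gradient. Writing $P = OP\,m$ and $c = OP/\varepsilon$, the ellipsoid is the zero set of $G(X) = |X| + |X-P| - c$; since $G < 0$ inside the ellipsoid and $G>0$ outside, $\nabla G$ points outward and the outer unit normal at a point $X$ of $E_d(P)$ is $\nabla G(X)/|\nabla G(X)|$. Because $\nabla G(X) = \dfrac{X}{|X|} + \dfrac{X-P}{|X-P|}$, the whole problem reduces to evaluating and simplifying this sum at $X = \rho_d(x)\,x$.

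First I would avoid computing any square root by using the defining relation itself: from $|X| + |X-P| = c$ and $|X| = \rho_d(x)$ we get $|X-P| = c - \rho_d(x)$, which by \eqref{ellipse} is a rational function of $x\cdot m$. Together with $X/|X| = x$ and $X - P = \rho_d(x)\,x - \varepsilon c\,m$ (using $OP = \varepsilon c$ and $d = \tfrac{c}{2}(1-\varepsilon^2)$, both immediate from $\varepsilon = OP/c$ and \eqref{eq:formulaforthefocalparameter}), this lets me write every term of $\nabla G$ over the common denominator $1 - \varepsilon\,x\cdot m$. Collecting the coefficients of $x$ and of $m$, the sum $x + \dfrac{X-P}{|X-P|}$ factors as $\dfrac{2(1-\varepsilon\,x\cdot m)}{\,1+\varepsilon^2 - 2\varepsilon\,x\cdot m\,}\,(x-\varepsilon m)$, and since $1 + \varepsilon^2 - 2\varepsilon\,x\cdot m = |x-\varepsilon m|^2$ the normalization is immediate, giving $\nu_d(x) = (x-\varepsilon m)/|x-\varepsilon m|$. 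The vector is well defined because $|x-\varepsilon m| \geq 1 - \varepsilon > 0$ for $0<\varepsilon<1$.

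The only genuine obstacle is bookkeeping in the final cancellation: one must verify that the common factor $1 - \varepsilon\,x\cdot m$ carried by both $\rho_d(x)$ and $|X-P|$ is exactly what makes the two focal unit vectors combine into a single positive multiple of $x - \varepsilon m$, with the purely $m$-directed contributions organizing, after extracting the common factor $2(1-\varepsilon\,x\cdot m)$, into the single term $-\varepsilon m$. A secondary point I would state explicitly is the orientation — that $\nabla G$, rather than $-\nabla G$, is the outward direction — which holds because $G$ increases as $X$ recedes from the foci, so that the positivity of the scalar factor above confirms $\nu_d$ is the outer, and not the inner, normal.
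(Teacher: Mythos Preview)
Your proof is correct. Both you and the paper realize the ellipsoid as a level set and read the normal off a gradient, but the choice of function differs. You use the bifocal function $G(X)=|X|+|X-P|-c$, which forces you to compute the unit vector toward the second focus and then carry out the algebraic cancellation you describe. The paper instead rewrites the polar equation \eqref{ellipse} as $|y|-\varepsilon\, m\cdot y = d$ (with $y=\rho_d(x)x$) and takes $h(y)=|y|-\varepsilon\, m\cdot y$ as the level-set function; then $\nabla h(y)=y/|y|-\varepsilon m = x-\varepsilon m$ drops out with no further work. So the paper's route is shorter. On the other hand, your approach via $G$ is the more classical one and has the merit of making the orientation argument entirely transparent (you can see at once that $G<0$ inside), whereas the paper simply asserts the formula gives the \emph{outer} normal without checking the sign of $\nabla h$.
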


\begin{proof}
Let $y=\rho_d(x)\,x$, then $x=y/|y|$ and from equation (\ref{ellipse})
$|y|-\varepsilon\, m\cdot y =d$.
So the ellipsoid is a level set of the function $h(y)$=$|y| - \varepsilon\, m\cdot y$, and so by computing the gradient of $h$ the formula immediately follows.
\end{proof}

From the law of reflection we obtain the following well known fact.
\begin{prop}\label{snell}
Given an ellipsoid $E_d(P)$ with foci $O$, $P$,
each ray emitted from $O$ is reflected off to $P$.
\end{prop}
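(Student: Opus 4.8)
The statement to prove is Proposition \ref{snell}: each ray emitted from the origin $O$ is reflected off the ellipsoid $E_d(P)$ into the other focus $P$. This is the classical reflection property of the ellipse, so the plan is to verify the law of reflection directly using the explicit normal computed in Proposition \ref{propnorm}.

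My plan is as follows. Fix a direction $x\in S^2$ and let $Y=\rho_d(x)x$ be the corresponding point on the ellipsoid. The incident ray travels from $O$ along the direction $x$, strikes the surface at $Y$, and I must show the reflected ray points toward $P$, i.e. along the direction $\dfrac{P-Y}{|P-Y|}$. The law of reflection states that the reflected direction $r$ is obtained from the incident direction $x$ by $r=x-2(x\cdot\nu_d)\,\nu_d$, where $\nu_d=\nu_d(x)$ is the unit normal from \eqref{nueq}. So the key step is to compute $x-2(x\cdot \nu_d)\nu_d$ and check that it equals a positive multiple of $P-Y=OP\,m-\rho_d(x)x$.

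First I would record the ingredients: from \eqref{nueq}, $\nu_d=\dfrac{x-\varepsilon m}{|x-\varepsilon m|}$, so $|x-\varepsilon m|^2=1-2\varepsilon\,x\cdot m+\varepsilon^2$; and from \eqref{ellipse}, $\rho_d(x)=\dfrac{d}{1-\varepsilon\,x\cdot m}$ together with $P=OP\,m$ and $d=\varepsilon\, f$ with the relation $OP=\dfrac{2\varepsilon d}{1-\varepsilon^2}$ from \eqref{eqop}. Then I would compute $x\cdot\nu_d=\dfrac{1-\varepsilon\,x\cdot m}{|x-\varepsilon m|}$ and substitute into $x-2(x\cdot\nu_d)\nu_d$. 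The numerator simplifies because $x-2\dfrac{(1-\varepsilon\,x\cdot m)}{|x-\varepsilon m|^2}(x-\varepsilon m)$ can be expanded and, using $|x-\varepsilon m|^2=1-2\varepsilon\,x\cdot m+\varepsilon^2$, collapses into a combination of $x$ and $m$ whose coefficients are, up to a common positive scalar, exactly the coefficients of $OP\,m-\rho_d(x)x$. Verifying that the resulting vector is a positive scalar multiple of $P-Y$ completes the argument.

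The main obstacle is purely algebraic bookkeeping: the reflection formula produces a vector that is a linear combination of $x$ and $m$, and one must track the scalar coefficients carefully through the substitution to confirm they match those of $P-Y$ after clearing the common denominators $|x-\varepsilon m|^2$ and $1-\varepsilon\,x\cdot m$. There is no conceptual difficulty, since the eccentricity bound $\varepsilon<1$ from Proposition \ref{propineq} guarantees all denominators are positive and the scalar multiple relating $r$ to $P-Y$ is genuinely positive, so the reflected ray points toward $P$ rather than away from it.
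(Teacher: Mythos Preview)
Your proposal is correct and follows the same route as the paper, which simply records that the result ``follows from the law of reflection'' without writing out the computation; you supply exactly that computation using the explicit normal \eqref{nueq}. One small remark: the fact $\varepsilon<1$ that you need for positivity of the scalar is already part of the definition of the ellipsoid, so there is no need to invoke Proposition~\ref{propineq} for it.
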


\setcounter{equation}{0}
\section{Reflectors and reflector measures}\label{sec:reflectorsandreflectormeasures}

In this section we introduce the definition of reflector and reflector measure, and prove some properties that will be used later in the paper.

\begin{defi}\label{def:definitionofreflector}
Let $\Omega\subseteq S^{2}$ such that $|\partial \Omega|=0$. The surface $\sigma=\{\rho(x)x\}_{x\in \bar{\Omega}}$ is a reflector from $\bar{\Omega}$ to $D$ if for each $x_0\in \bar{\Omega}$ there exists an ellipsoid $E_d(P)$ with $P\in D$ that supports $\sigma$ at $\rho(x_0)\,x_0$. That is, $E_d(P)$ is given by $\rho_d(x)=\dfrac{d}{1-\varepsilon \,x\cdot m}$ with $m=\dfrac{\overrightarrow{OP}}{OP}$ and satisfies $\rho(x)\leq \rho_d(x)$ for all $x\in \bar{\Omega}$ with equality at $x=x_0$.
\end{defi}
 Notice that each reflector is concave and therefore continuous.

 The reflector mapping associated with a reflector $\sigma$ is given by
 \[
 \mathcal N_\sigma(x_0)=\{P\in D: \text{there exists $E_d(P)$ supporting $\sigma$ at $\rho(x_0)x_0$}\};
 \]
 and the tracing mapping is
\[
\tau_\sigma(P)=\{x\in \bar{\Omega}: P\in \mathcal N_\sigma(x) \}.
\]

We prove the following lemma of crucial importance for the definition of reflector measure.
For its proof,  we use a real analysis result proved afterwards in Lemma \ref{lm:densitypointslebesgue}.

\begin{lem}\label{lem:aleksandrovforreflectors}
Suppose $D$ is contained in a plane $\Pi$ in $\R^3$, and let $\sigma$ be a reflector from $\bar{\Omega}$ to $D$.
Then the set
\[
S=\{x\in \bar{\Omega}:\text{there exist $P_1\neq P_2$ in $D$ such that $x\in \tau_\sigma(P_1)\cap \tau_{\sigma}(P_2)$}\}
\]
has measure zero in $S^{2}$.
\end{lem}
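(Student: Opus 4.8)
The plan is to show that the "bad" set $S$, where a ray direction $x$ is simultaneously reflected to two distinct target points $P_1 \neq P_2$, has measure zero. The geometric content is this: if an ellipsoid $E_{d_1}(P_1)$ and an ellipsoid $E_{d_2}(P_2)$ both support the reflector $\sigma$ at the same point $\rho(x_0)x_0$, then the reflector has a genuine corner there unless the two supporting ellipsoids share a tangent plane at that point. Since $\sigma$ is concave (noted right after Definition \ref{def:definitionofreflector}), it is differentiable almost everywhere, so at almost every $x_0$ the normal $\nu(x_0)$ is uniquely determined. The heart of the argument is to convert "nondifferentiability" into the statement "the point $x_0$ lies on the boundary of the trace set of some fixed target point," and then to show such boundaries are negligible.

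**First I would** fix the geometry using the tools already assembled. At a point $x_0 \in S$ where $\sigma$ happens to be differentiable, both supporting ellipsoids $E_{d_1}(P_1)$ and $E_{d_2}(P_2)$ touch $\sigma$ at $\rho(x_0)x_0$ and lie above it, so both have the same tangent plane there, hence the same outer normal. By Proposition \ref{propnorm} the normal to $E_d(P)$ at $\rho_d(x)x$ is $\nu_d(x) = \dfrac{x - \varepsilon m}{|x - \varepsilon m|}$, which depends on $P$ only through the eccentricity $\varepsilon$ and the direction $m = \overrightarrow{OP}/OP$. I would argue that equality of the two normals at $x_0$, together with $P_1, P_2$ lying in the common plane $\Pi$, forces a rigid constraint: the directions $m_1, m_2$ and eccentricities $\varepsilon_1, \varepsilon_2$ must be compatible in a way that pins $x_0$ to a lower-dimensional locus. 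Concretely, once the tangent plane at $\rho(x_0)x_0$ is fixed, the reflected ray is determined by the reflection law (Proposition \ref{snell} gives focusing to the second focus), so a single reflected ray can meet the plane $\Pi$ in at most one point unless the ray lies in $\Pi$; thus $P_1 = P_2$ at any differentiability point, contradiction. Therefore $S$ is contained in the set of nondifferentiability points of $\sigma$.

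**The next step** is to control the nondifferentiability set. Since $\rho$ is concave, writing $\sigma$ as a graph of a concave function over $\bar\Omega$ (or over its radial projection), the set of points where the gradient fails to exist is a Lebesgue-null set by standard convex-analysis results (Rademacher/Aleksandrov theory for convex functions), and pushing this forward through the bi-Lipschitz correspondence $x \mapsto \rho(x)x$ keeps it null in $S^2$. This is where I would invoke the promised real-analysis result, Lemma \ref{lm:densitypointslebesgue}, to handle the measurability and density-point bookkeeping needed to pass between the graph parametrization and the spherical measure, and to confirm that $S$, being a subset of a null set, is itself measurable with measure zero.

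**The hard part will be** the geometric rigidity step, namely ruling out that two distinct target points in $\Pi$ can share both the contact point and the tangent plane at a differentiability point of $\sigma$. The planarity hypothesis on $D$ is exactly what makes this work — the excerpt's Remark \ref{rmk:counterexampletoaleksandrov} warns the statement can fail without it — so I must use that $D \subseteq \Pi$ decisively, presumably by observing that a reflected ray determined by the fixed normal at $x_0$ hits the plane $\Pi$ transversally in a unique point, so there is no room for two distinct foci $P_1, P_2$. Formalizing "a single reflected direction meets a plane in one point" while correctly excluding the degenerate cases (reflected ray parallel to $\Pi$, or the source direction tangent to $\Pi$) is the delicate bookkeeping I expect to occupy most of the proof.
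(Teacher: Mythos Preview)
Your geometric reduction is right up to a point, but the conclusion you draw from it is wrong, and this is a genuine gap rather than bookkeeping. At a differentiability point $x_0\in S$ the common normal forces the reflected direction $r=x_0-2(x_0\cdot\nu_0)\nu_0$ to satisfy $\dfrac{P_1-X_0}{|P_1-X_0|}=\dfrac{P_2-X_0}{|P_2-X_0|}=r$, so $P_1$ and $P_2$ both sit on the half-line from $X_0=\rho(x_0)x_0$ with direction $r$. Two distinct points of $\Pi$ on this line do \emph{not} contradict anything: they force $X_0$ itself to lie on the line through $P_1,P_2$, hence $X_0\in\Pi$. In other words, the ``degenerate case'' (reflected ray lying in $\Pi$) is not an exception to be brushed aside; it is exactly the case that occurs for every $x_0\in S\cap N^c$. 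So you do \emph{not} get $S\subseteq N$; you get
\[
S\cap N^c\ \subseteq\ F:=\{x\in\bar\Omega:\rho(x)x\in\Pi\},
\]
and a priori $F$ could have positive measure (nothing prevents the reflector from grazing the plane $\Pi$ on a set of positive area).

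The paper handles this by a density argument, and this is where Lemma \ref{lm:densitypointslebesgue} is actually used --- it is not measurability bookkeeping but the core step. One shows (when $O\notin\Pi$) that at each $z_0\in F\cap N^c$ the unique supporting plane $H_{z_0}$ to $\sigma$ differs from $\Pi$; the plane through $O$ and the line $H_{z_0}\cap\Pi$ bisects every spherical ball $B_r(z_0)$ into two halves, and the condition $\rho=w$ on $F$ together with $\rho\leq h_{z_0}$ confines $F$ to one half. Hence the upper density of $F$ at $z_0$ is at most $\tfrac12$, and Lemma \ref{lm:densitypointslebesgue} then forces $|F|=0$. Your outline should be reorganized around this: the nondifferentiability set is the easy part, and the substance of the proof is showing that the set of differentiability points of $S$ --- where the reflector meets $\Pi$ --- has measure zero via the half-density argument.
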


\begin{proof}
Let $N$ be the set of points where $\sigma$ is not differentiable. Since $\rho$ is concave, it is  locally Lipschitz and so the measure of $N$ in $S^{2}$ is zero. Let us write $S=(S\cap N)\cup (S\cap N^c)$. We shall prove that the measure, in $S^{2}$, of $S\cap N^c$ is zero.

Let $x_0\in S\cap N^c$, then there exist $E_{d_1}(P_1)$ and $E_{d_2}(P_2)$ supporting ellipsoids to $\sigma$ at $x_0$
with $P_1\neq P_2$,
and $x_0$ is not a singular point of $\sigma$. Then there is a unique normal $\nu_0$ to $\sigma$ at $x_0$ and
$\nu_0$ is also normal to both ellipsoids $E_{d_1}(P_1)$ and $E_{d_2}(P_2)$ at $x_0$.
From the Snell law applied to each ellipsoid we then get that
\[
r_i=x_0-2\,(x_0\cdot \nu_0)\,\nu_0,
\]
where $r_i=\dfrac{P_i-X_0}{|P_i-X_0|}$ is the reflected unit direction by the ellipsoid $E_{d_i}(P_i)$, with $X_0=\rho(x_0)x_0$,
$i=1,2$.
Therefore
\[
\dfrac{P_1-X_0}{|P_1-X_0|}=\dfrac{P_2-X_0}{|P_2-X_0|}.
\]
This implies that $X_0$ is on the line joining $P_1$ and $P_2$, and since $D\subseteq \Pi$ we get that
$E:=\{\rho(x_0)\,x_0:x_0\in S\cap N^c\}\subseteq \Pi$. That is, the graph of $\sigma$, for $x\in S\cap N^c$, is contained in the plane $\Pi$.

We will prove that the set $S\cap N^c$ has measure zero in $S^2$.

{\bf Case 1:} $O\in \Pi$.
We have $\rho(z)\,z\in \Pi$, for each $z\in S\cap N^c$, then incident ray with direction $z$ is contained in $\Pi$. This means that $S\cap N^c\subseteq \Pi$, and therefore
$S\cap N^c$ is contained in a great circle of $S^2$ and therefore has surface measure zero.

{\bf Case 2:} $O\notin \Pi$. In this case, $\Pi$ cannot be a supporting plane to $\sigma$ at any $x\in F:=S\cap N^c$. 
Otherwise, if $\Pi$ supports $\sigma$ at some $z\in F$, the ray with direction $z$ 
is reflected off at the point $\rho(z)z$ into a ray with
unit direction $r_z$ on the plane $\Pi$. If $\nu_0$ is the unit normal to $\Pi$, then $\nu_0\perp r_z$.
By the Snell law, $r_z=z-2\,(z\cdot \nu_0)\,\nu_0$, and so $\rho(z)z\cdot \nu_0=0$, and so $O\in \Pi$, contradiction.

To show that $F$ has measure zero we will use the notion of density point to a set.
%
%
Let $w(x)$ be the polar equation of the hyperplane $\Pi$.
We have that $\rho(x)=w(x)$ for all $x\in F$.
Let $z_0\in F$. Since $\rho$ is concave, there is a plane $H_{z_0}$, with polar radius $h_{z_0}(x)$, that supports $\sigma$ at $\rho(z_0)\,z_0$.
That is,
$h_{z_0}(x)\geq \rho(x)$ for all $x\in \bar{\Omega}$ and $h_{z_0}(z_0)=\rho(z_0)$. This plane is unique because $z_0$ is not a singular point of $\sigma$, and therefore $0\notin H_{z_0}$.
Since $\Pi$ is not a supporting plane, we have that $H_{z_0}\neq \Pi$, and then the sets
$A^+=\{x\in S^2:h_{z_0}(x)\geq w(x)\}$ and
$A_-=\{x\in S^2:h_{z_0}(x)<w(x)\}$ have both non empty interiors.

Consider the line $\Pi\cap H_{z_0}:=\ell$, which passes through $\rho(z_0)\,z_0$,
and the plane $J$ containing the line $\ell$ and the origin. The point $\rho(z_0)\,z_0\in J$.
For each $r>0$, the plane $J$ divides each spherical ball $B_r(z_0)$ into two disjoint pieces $B_r(z_0)^+$ and
$B_r(z_0)^-$ having the same spherical measure, 
$B_r(z_0)=B_r(z_0)^+\cup B_r(z_0)^-$,
and with $B_r(z_0)^+=B_r(z_0)\cap A^+$ and $B_r(z_0)^-
=B_r(z_0)\cap A^-$.
We have for $x\in F$, that $h_{z_0}(x)\geq \rho(x)=w(x)$, so $F\subseteq A^+$, and therefore $F\cap A^-=\emptyset$.
Then
\[
B_r(z_0)\cap F=(B_r(z_0)^+\cap F)\cup (B_r(z_0)^-\cap F)\subseteq B_r(z_0)^+
\]
and consequently we get
\[
|B_r(z_0)\cap F|_*\leq |B_r(z_0)^+|=\dfrac12 \,|B_r(z_0)|;
\]
$|\cdot |_*$ denotes the spherical Lebesgue outer measure in $S^2$ (notice that we do not know a priori if the set $F$ is measurable in the sphere).
That is,
\[
f(z_0):=\limsup_{r\to 0}\dfrac{|B_r(z_0)\cap F|_*}{|B_r(z_0)|}\leq 1/2.
\]
So
\[
z_0\in \{z\in F:f(z)<1\}:=M;
\]
but the set $M$ has measure zero by Lemma \ref{lm:densitypointslebesgue}. 
Therefore the measure of $F$ is zero in $S^2$ and the proof of the lemma is complete.
%


\end{proof}

\begin{remark}\label{rmk:countabletarget}\rm
If $D$ is a finite or countable set, then the conclusion of Lemma \ref{lem:aleksandrovforreflectors} holds
regardless if $D$ is on a plane. In fact, let $D=\{P_i\}_{i=1}^\infty$, with $O\notin D$.
Let $\Pi_{ij}$ be the plane (or line) generated by the points $O,P_i,P_j$.
Following the proof of Lemma \ref{lem:aleksandrovforreflectors} we have that 
\[
S\cap N^c\subseteq \cup_{i\neq j}\Pi_{ij},
\]
and since the surface measure of $S\cap N^c\cap \Pi_{ij}$ is zero we are done.
\end{remark}

\begin{remark}\label{rmk:counterexampletoaleksandrov}\rm
We present an example of a target $D$ that is not contained in a plane, a set $\Omega\subseteq S^2$, and a reflector $\sigma$ from $\bar \Omega$ to $D$ such that the set $S$ in Lemma \ref{lem:aleksandrovforreflectors} has positive measure.
Consider the origin $O$, the point $P_0=(0,2,0)$, and the half sphere $S_-=\{X=(x_1,x_2,x_3):|X-P_0|=1,x_3\leq 0\}$.
Let $D= S_- \cup \{P_0\}$, and consider the ellipsoid $E_d(P_0)=\{\rho_d(x)x\}_{x\in S^2}$ with $d$ large enough such that it contains $D$. Let $\Omega=\{x=(a,b,c)\in S^2:c>0\}$ and the reflector $\sigma=\{\rho_d(x)x\}_{x \in \bar{\Omega}}$.

Each point $P\in D$ is reached by reflection, because the ray from $P_0$ passing through $P$ intersects $\sigma$ at some point $P'$ and since $\sigma$ is an
ellipsoid the ray emanating from $O$ with direction $P'/|P'|$ is reflected off to $P$.
We can see in this case that the set $S$ in Lemma \ref{lem:aleksandrovforreflectors} has positive measure. Indeed, let $P\in D$, $P\neq P_0$. By continuity there is an ellipsoid of revolution $E_{d'}(P)$ with foci $O$ and $P$ that supports $\sigma$ 
at some unit direction $x_P$. That is, $x_P\in \tau_\sigma(P)$. Since $\sigma$ is an ellipsoid we also have $x_P\in \tau_\sigma(P_0)$. Since there is a one to one correspondence between the points $P$ and $x_P$, the measure of $S$ is positive.

Similarly, Lemma \ref{lem:aleksandrovforreflectors} does not hold when $D$ is contained in a union of planes. For example, let $P=(0,2,0)$, $\mathcal C$ be the closed disk centered at $(0,2,-1)$ and radius 1 contained on the plane $z=-1$, and let the target be $D=P\cup \mathcal C$. If $\sigma$ is a sufficiently large ellipsoid with foci $O$, $P$, and containing $D$, then the set $\tau_\sigma(P)\cap \tau_\sigma(\mathcal C)$ has positive measure.
\end{remark}

\begin{lem}\label{lm:densitypointslebesgue}
Let $S\subseteq \R^n$ be a set not necessarily Lebesgue measurable and consider
\[
f(x):=\limsup_{r\to 0} \dfrac{|S\cap B_r(x)|_*}{|B_r(x)|},
\]
where $|\cdot |_*$ and $|\cdot |$ denote the Lebesgue outer measure and Lebesgue measure respectively.
If
\[
M=\{x\in S: f(x)<1\},
\]
then $|M|=0$.
Here $B_r(x)$ is the Euclidean ball centered at $x$ with radius $r$.

Moreover, if $B_r(x)$ is a ball in a metric space $X$ and
$\mu^*$ is a Carath\'eodory outer measure on $X$\footnote{From \cite[Theorem (11.5)]{wheeden-zygmund:book} every Borel subset of $X$ is Carath\'eodory measurable.}, then a similar result holds true for all $S\subseteq X$.
\end{lem}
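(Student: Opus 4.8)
The plan is to reduce the statement for the possibly non-measurable set $S$ to the classical Lebesgue density theorem for a \emph{measurable} set, by passing to a measurable hull. First I would record the following standard fact about Lebesgue outer measure: since $|\cdot|$ is $\sigma$-finite and Borel regular, every $S\subseteq\R^n$ admits a Lebesgue measurable set $H\supseteq S$ enjoying the localized identity
\[
|S\cap E|_* = |H\cap E| \qquad \text{for every measurable } E.
\]
To build $H$ when $|S|_*<\infty$, choose measurable $H_k\supseteq S$ with $|H_k|\le |S|_*+1/k$ and put $H=\bigcap_k H_k$, so that $|H|=|S|_*$. The strong identity then follows from a Carath\'eodory splitting argument: writing $E'=H\cap E$ and using that $S\subseteq H$ gives $S\cap E=S\cap E'$, one has $|S|_*=|S\cap E'|_*+|S\setminus E'|_*\le |E'|+|H\setminus E'|=|H|=|S|_*$, forcing equality and hence $|S\cap E|_*=|E'|=|H\cap E|$. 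The $\sigma$-finite case is obtained by decomposing $\R^n$ into finite-measure pieces.

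With $H$ in hand, the heart of the argument is short. Each Euclidean ball $B_r(x)$ is measurable, so the hull identity applied with $E=B_r(x)$ gives $|S\cap B_r(x)|_* = |H\cap B_r(x)|$ for all $x$ and all $r>0$, whence
\[
f(x) = \limsup_{r\to 0}\frac{|H\cap B_r(x)|}{|B_r(x)|};
\]
that is, $f$ is exactly the upper density of the \emph{measurable} set $H$. Now I would invoke the Lebesgue density theorem: the set $Z$ of points of $H$ that fail to be points of density one for $H$ has $|Z|=0$. If $x\in H$ is a density point, then the genuine limit of $|H\cap B_r(x)|/|B_r(x)|$ equals $1$, so $f(x)=1$ and therefore $x\notin M$. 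Thus $M\subseteq Z$, which yields $|M|_*\le |Z|=0$; since null sets are measurable, $|M|=0$.

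For the metric-space version the same two-step scheme applies essentially verbatim. Closed balls are Borel, hence $\mu^*$-measurable by the cited Wheeden--Zygmund result, so the hull identity reads $\mu^*(S\cap B_r(x)) = \mu(H\cap B_r(x))$ and $f$ is again the upper density of the measurable hull $H$; one only replaces the classical density theorem by its analogue for $\mu^*$.

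The one genuinely nontrivial ingredient—and the step I expect to be the main obstacle—is the construction of the measurable hull with the \emph{localized} identity $|S\cap E|_*=|H\cap E|$, since this is precisely what allows us to bypass the non-measurability of $S$; everything afterward is a direct application of the density theorem. In the metric setting the extra care is that a density theorem for $\mu^*$ presupposes a Vitali/Besicovitch-type covering theorem, which is available on $\R^n$ and on $S^2$ with surface measure, the only case used later in the paper.
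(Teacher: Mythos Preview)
Your argument is correct, but it follows a genuinely different route from the paper's. The paper proves the lemma \emph{directly} by a Vitali covering argument: it stratifies $M=\bigcup_k M_k$ according to the smallest integer $m_x$ with $f(x)<1-1/m_x$, assumes some $M_k$ has positive outer measure, and then uses the Vitali covering theorem (in the form of \cite[(7.18)--(7.19)]{wheeden-zygmund:book}) to extract a finite disjoint family of balls $B_1,\dots,B_N$ that nearly covers $M_k$ while $\sum|B_i|<(1+\varepsilon)|M_k|_*$; the density bound $|S\cap B_i|_*<(1-1/k)|B_i|$ then forces $|M_k|_*<(1-1/k)|M_k|_*$, a contradiction. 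In effect the paper is reproving the relevant piece of the density theorem by hand.

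Your approach trades that computation for the measurable-hull identity $|S\cap E|_*=|H\cap E|$, which converts $f$ into the upper density of a \emph{measurable} set and lets you invoke the Lebesgue density theorem as a black box. This is shorter and more conceptual, and the hull argument you sketch is correct (the Carath\'eodory splitting step is exactly the right way to get the localized identity). What the paper's route buys is self-containment and an explicit isolation of the Vitali step, which then transfers verbatim to the metric-space statement via \cite[Theorem 2.2.2]{ambrosio-tilli:bookmetricspaces}. In your version the metric-space extension requires, as you correctly flag, both a density theorem for $\mu^*$ (hence a Vitali-type covering property) and enough regularity of $\mu^*$ to build the hull; these hold in the $S^2$ case actually used, but the paper's direct argument needs only the covering theorem and nothing about hulls.
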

\begin{proof}
We first assume $|S|_*<\infty$.
Fix $x\in M$. There exists a positive integer $m$ such that $f(x)<1-\dfrac{1}{m}<1$, and let $m_x$ be the smallest integer with this property.
So for each $\eta>0$ sufficiently small we have
\begin{equation}\label{densityinequality}
\sup_{0<r\leq \delta}\dfrac{|S\cap B_r(x)|_*}{|B_r(x)|}<1-\dfrac{1}{m_x},\qquad \text{for all $0<\delta \leq \eta$}.
\end{equation}
Given a positive integer $k$, let $M_k=\{x\in M: m_x=k\}$. We have $M=\cup_{k=1}^\infty M_k$.
We shall prove that $|M_k|=0$ for all $k$.
Suppose by contradiction that $|M_k|_*>0$ for some $k$, we also have that $|M_k|_*<\infty$.
Let us consider the family of balls $\mathcal F=\{B_r(x)\}_{x\in M_k}$ with $B_r(x)$ satisfying \eqref{densityinequality}. Then we have that the family $\mathcal F$ covers $M_k$ in the Vitali sense, i.e., for every $x\in M_k$ and for every $\eta>0$ there is ball in $\mathcal F$ containing $x$ whose diameter is less than $\eta$. Therefore from \cite[Corollary (7.18) and (7.19)]{wheeden-zygmund:book} we have that given $\varepsilon>0$ there exists a family of disjoint balls $B_1,\cdots ,B_N$ in $\mathcal F$ such that
\begin{align*}
|M_k|_* -\varepsilon &< \left| M_k\cap \cup_{i=1}^N B_i \right|,\qquad \text{and}\\
\sum_{i=1}^N |B_i| &< (1+\varepsilon)|M_k|_*.
\end{align*}
Since $M_k\subseteq S$, then from \eqref{densityinequality} we get
\[
|M_k|_* -\varepsilon< \left| S\cap \cup_{i=1}^N B_i \right|\leq \sum_{i=1}^N |S\cap B_i|\leq \left(1-\dfrac{1}{k}\right)\sum_{i=1}^N |B_i|
<(1+\varepsilon)\left(1-\dfrac{1}{k}\right)|M_k|_*,
\]
then letting $\varepsilon\to 0$ we obtain a contradiction.

If  $|S|_*=\infty$, we can write
$S=\cup_{j=1}^\infty S_j$ with $|S_j|_*<\infty$. Let 
$f_j(x)=\limsup_{r\to 0} \dfrac{|S_j\cap B_r(x)|_*}{|B_r(x)|}$. We have $f_j\leq f$, and so
\begin{align*}
\{x\in S:f(x)<1\}= \cup_{j=1}^\infty \{x\in S_j:f(x)<1\}
\subseteq  \cup_{j=1}^\infty \{x\in S_j:f_j(x)<1\}
\end{align*} 
and the lemma follows.

This argument applies to the case of a general metric space, because the Vitali covering theorem in that context is available from the book of Ambrosio and Tilli \cite[Theorem 2.2.2.]{ambrosio-tilli:bookmetricspaces},
and then the second part of our lemma follows.

\end{proof}

As a consequence of Lemma \ref{lem:aleksandrovforreflectors} we obtain the following.
\begin{prop}\label{propmeasu}
Suppose $\sigma$ is a reflector from $\bar{\Omega}$ to $D$ and the target $D$ is contained in a plane or $D$ is countable.
If $A$ and $B$ are disjoint subsets of $D$, then $\tau_\sigma(A) \cap \tau_\sigma(B)$ has Lebesgue measure zero.
\end{prop}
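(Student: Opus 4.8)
The plan is to deduce this directly from Lemma \ref{lem:aleksandrovforreflectors} together with Remark \ref{rmk:countabletarget}, by showing that the set $\tau_\sigma(A)\cap\tau_\sigma(B)$ is contained in the exceptional set $S$ already proved to have measure zero. The main point is to unwind the definition of the tracing map on sets and exploit the disjointness of $A$ and $B$; there is no genuine analytic obstacle here, so the proof is essentially a containment argument.

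First I would record that, by definition of the tracing mapping, $\tau_\sigma(A)=\bigcup_{P\in A}\tau_\sigma(P)$ and likewise for $B$. Hence if $x\in\tau_\sigma(A)\cap\tau_\sigma(B)$, then there exist $P_1\in A$ and $P_2\in B$ with $x\in\tau_\sigma(P_1)$ and $x\in\tau_\sigma(P_2)$. The key observation is that since $A$ and $B$ are disjoint, $P_1\in A$ and $P_2\in B$ force $P_1\neq P_2$. Therefore $x\in\tau_\sigma(P_1)\cap\tau_\sigma(P_2)$ with $P_1\neq P_2$, which is exactly the condition defining the set $S$ in Lemma \ref{lem:aleksandrovforreflectors}. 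This yields the inclusion $\tau_\sigma(A)\cap\tau_\sigma(B)\subseteq S$.

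Next I would invoke the measure-zero conclusion for $S$. Under the hypothesis that $D$ lies in a plane, this is precisely Lemma \ref{lem:aleksandrovforreflectors}; under the hypothesis that $D$ is countable, it is Remark \ref{rmk:countabletarget}. In either case $|S|=0$ in $S^2$. Since $\tau_\sigma(A)\cap\tau_\sigma(B)\subseteq S$, its spherical Lebesgue outer measure is bounded by $|S|_*=0$; consequently the set is Lebesgue measurable with measure zero. This last remark about measurability is worth stating explicitly, because a priori $\tau_\sigma(A)\cap\tau_\sigma(B)$ need not be known to be measurable, but being contained in a null set settles both its measurability and the vanishing of its measure at once. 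This completes the argument, and no step presents a serious difficulty beyond the careful bookkeeping of the two hypotheses on $D$.
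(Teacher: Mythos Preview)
Your argument is correct and is exactly the approach the paper intends: it states Proposition~\ref{propmeasu} simply ``as a consequence of Lemma~\ref{lem:aleksandrovforreflectors}'' without writing out the details, and the containment $\tau_\sigma(A)\cap\tau_\sigma(B)\subseteq S$ you establish is precisely that consequence. Your remark that the subset of a null set is automatically measurable with measure zero is a welcome clarification the paper omits.
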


\begin{remark}\rm
We assume that the target $D$ is compact, and let us set $M=\max_{P\in D}OP$.
We are interested in reflectors $\sigma$ such that they are at a positive distance from the origin.
Say we want $\rho(x)\geq a>0$ for all $x \in \Omega$.  To obtain this, by Proposition \ref{propineq}  it is enough to pick $\delta>0$ such that $\dfrac{\delta\,M}{1+c_{\delta}}\geq a$ and impose the condition that $d\geq \delta M$ for each supporting ellipsoid $E_d(P)$ to $\sigma$.
\end{remark}
\begin{defi}\label{def:A(delta)}
Suppose $D$ is compact not containing $O$, and $M=\max_{P \in D} OP$.
For each $\delta>0$, we let
$
\mathcal{A(\delta)}$ be the collection of all reflectors $\sigma=\{\rho(x)x\}_{x\in \bar{\Omega}}$ from $\bar{\Omega}$ to $D$, such that for each $x_0 \in \bar{\Omega} $ there exist a supporting ellipsoid $E_d(P)$ to $\sigma$ at $\rho(x_0)\,x_0$ with $P \in D$ and $d\geq \delta\, M$.
\end{defi}

The following proposition will be used in Section \ref{sec:solutionforgeneralmeasure}.
\begin{prop}\label{lipschitz}
If $\sigma=\{\rho(x)x\}_{x\in \bar{\Omega}}$ is a reflector in $\mathcal{A(\delta)}$, then $\rho$ is globally Lipschitz in $\bar{\Omega}$, $\rho$ is bounded\footnote{The bound is not necessarily uniform for all $\sigma\in \mathcal A(\delta)$.}, and the surface $\sigma$ is strictly convex. The Lipschitz constant of $\rho$ is bounded uniformly by a constant depending only on $\delta$ and $M$.
\end{prop}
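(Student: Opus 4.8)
The plan is to reduce everything to a single uniform Lipschitz estimate for the polar radius $\rho_d$ of an individual supporting ellipsoid, and then transfer it to $\rho$ through the sandwiching inequalities built into the definition of a supporting ellipsoid. First I would observe that since $\sigma\in\mathcal A(\delta)$, at every $x_0\in\bar\Omega$ there is a supporting ellipsoid $E_d(P)$ with $P\in D$ and $d\geq \delta M\geq \delta\,OP$ (because $OP\leq M$). Hence Proposition \ref{propineq} applies with the same constant $c_\delta=-\delta+\sqrt{1+\delta^2}$, so every supporting ellipsoid has eccentricity $\varepsilon\leq c_\delta<1$, and in particular $1-\varepsilon\,x\cdot m\geq 1-c_\delta>0$ for all $x\in S^2$.

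The crux is the following uniform estimate. For any two $x_1,x_2\in S^2$, the polar equation \eqref{ellipse} gives
\begin{equation*}
\rho_d(x_1)-\rho_d(x_2)=d\,\frac{\varepsilon\,(x_1-x_2)\cdot m}{(1-\varepsilon\,x_1\cdot m)(1-\varepsilon\,x_2\cdot m)},
\end{equation*}
so that, using $|m|=1$ and the lower bound on the denominators,
\begin{equation*}
|\rho_d(x_1)-\rho_d(x_2)|\leq \frac{d\,\varepsilon}{(1-c_\delta)^2}\,|x_1-x_2|.
\end{equation*}
Although $d$ and $\max_{S^2}\rho_d$ need not be bounded uniformly over $\mathcal A(\delta)$ (this is the reason for the footnote), the product $d\,\varepsilon$ is: rearranging \eqref{eqop} yields $\varepsilon d=\tfrac12\,OP\,(1-\varepsilon^2)\leq \tfrac12\,OP\leq \tfrac12 M$. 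Therefore every supporting ellipsoid satisfies the \emph{uniform} Lipschitz bound
\begin{equation*}
|\rho_d(x_1)-\rho_d(x_2)|\leq L\,|x_1-x_2|,\qquad L:=\frac{M}{2(1-c_\delta)^2},
\end{equation*}
with $L$ depending only on $\delta$ and $M$. I regard isolating this product $\varepsilon d$ as the main point; everything else is routine.

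To pass to $\rho$, fix $x_1,x_2\in\bar\Omega$ and let $E_{d_1}(P_1)$, $E_{d_2}(P_2)$ be supporting ellipsoids at $x_1$ and $x_2$. Definition \ref{def:definitionofreflector} gives $\rho\leq\rho_{d_1}$ with equality at $x_1$, hence $\rho(x_2)-\rho(x_1)\leq\rho_{d_1}(x_2)-\rho_{d_1}(x_1)\leq L\,|x_1-x_2|$; symmetrically $\rho(x_1)-\rho(x_2)\leq L\,|x_1-x_2|$. Thus $|\rho(x_1)-\rho(x_2)|\leq L\,|x_1-x_2|$, the global Lipschitz bound with constant depending only on $\delta$ and $M$. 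Boundedness of $\rho$ is then immediate, since $\rho$ is continuous on the compact set $\bar\Omega$ (alternatively $\rho(x)\leq\rho(x_0)+L\,\diam\bar\Omega$ for a fixed $x_0$), while Proposition \ref{propineq} gives the uniform lower bound $\rho\geq \delta M/(1+c_\delta)>0$.

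Finally, for strict convexity I would argue that $\sigma$ bounds the convex region $W=\bigcap\overline{E_d(P)}$, the intersection being over all supporting solid ellipsoids; indeed $\rho\leq\rho_d$ shows $\sigma$ lies in every such solid ellipsoid, and in direction $x$ the radial extent of $W$ equals $\min_d\rho_d(x)=\rho(x)$ because a supporting ellipsoid attains equality there. Suppose $\partial W$ contained a nondegenerate segment with an interior point $X_0=\rho(x_0)x_0$ on $\sigma$. The supporting ellipsoid $E_d(P)$ at $x_0$ contains the whole segment (as $\sigma\subseteq\overline{E_d(P)}$), yet has $X_0$ on its boundary; since an ellipsoid with $\varepsilon\leq c_\delta<1$ is nondegenerate and hence strictly convex, its boundary contains no segment through an interior boundary point, a contradiction. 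Hence $\sigma$ is strictly convex.
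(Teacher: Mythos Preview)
Your proof is correct and follows essentially the same approach as the paper: both bound $\rho_d(x_1)-\rho_d(x_2)$ via the explicit formula, isolate the product $\varepsilon d$ using \eqref{eqop}, and transfer the estimate to $\rho$ by the supporting-ellipsoid inequalities. The paper obtains the slightly sharper constant $\tfrac{M}{2}\cdot\tfrac{1+c_\delta}{1-c_\delta}$ by simplifying $\varepsilon d/(1-\varepsilon)^2=\tfrac{OP}{2}\cdot\tfrac{1+\varepsilon}{1-\varepsilon}$ before bounding $\varepsilon$ by $c_\delta$, and for strict convexity it argues more directly that the tangent plane to a supporting ellipsoid at $\rho(x_0)x_0$ strictly supports $\sigma$, rather than going through the intersection body $W$ and a segment argument; both routes are valid.
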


\begin{proof}
Let $x,y \in \bar{\Omega}$. Then there exist $P \in D$ with $d\geq \delta M$ such that $E_d(P)$ supports $\sigma$ at $\rho(x)x$, i.e.,
$\rho(z) \leq \rho_d(z)$ for all $z \in \bar{\Omega}$ with equality at $z=x$.
Therefore
\begin{align*}
\rho(y) - \rho (x) &= \rho(y)-\rho_d(x)\leq \rho_d(y)-\rho_d(x)= \dfrac{d}{1-\varepsilon \, y\cdot m }-\dfrac{d}{1-\varepsilon \, x\cdot m }\\
&=\dfrac{\varepsilon d}{(1-\varepsilon \, x\cdot m )(1-\varepsilon \,  y\cdot m )} (y-x)\cdot  m\\
&\leq \dfrac{\varepsilon d}{(1-\varepsilon)^2} |x-y|
=\dfrac{(1-\varepsilon^2)\,OP}{2\,(1-\varepsilon)^2}\, |x-y|  \qquad \text {by (\ref{eqop})}\\
&=\dfrac{OP}{2}\, \dfrac{1+\varepsilon}{1-\varepsilon} \,|x-y| \leq \dfrac{M}{2} \,\dfrac{1+c_\delta}{1-c_\delta}\, |x-y| \qquad \text{by \eqref{epsbdd}.}
\end{align*}
 Interchanging the roles of $x,y$, we conclude that
 $|\rho(y) - \rho (x)| \leq \dfrac{M}{2} \,\dfrac{1+c_\delta}{1-c_\delta}\, |x-y|.$

To prove boundedness of $\rho$, from the definition of $\mathcal A(\delta)$ we have $\rho(x) \geq \dfrac{\delta M}{1+c_\delta}$.
Let $x_0 \in \bar{\Omega}$, then there exist $E_{d_0}(P_0)$ supporting $\sigma$ at $\rho(x_0)x_0$. By Proposition \ref{propineq},
$\rho(x) \leq \rho_{d_0}(x) \leq \dfrac{d_0}{1-c_{\delta}}$for every $x
\in \bar{\Omega}$.

Let $\Pi_0$ the plane tangent to $E_{d_0}(P_0)$ at $\rho(x_0)x_0$ and let $p_0(x)$ be it's polar equation. By strict convexity of the ellipsoid, we have
$\rho_{d_0}(x) < p_0(x)$ for all $x \in \bar{\Omega}$, $x\neq x_0$, and $\rho_{d_0}(x_0)=p_0(x_0)$.
Therefore the strict convexity follows.

 \end{proof}

\begin{prop}\label{Harnack}
If $\sigma=\{\rho(x)x\}_{x\in \bar{\Omega}}$ is a reflector in $\mathcal A(\delta)$, then $\sigma$ admits the following "Harnack type" inequality:
$$\max_{x\in \bar{\Omega}}\rho(x) \leq \dfrac{1+c_{\delta}}{1-c_{\delta}} \min_{x\in \bar{\Omega}} \rho(x).$$
\end{prop}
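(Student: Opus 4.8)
The plan is to exploit a single supporting ellipsoid to control both the maximum and the minimum of $\rho$ at once, so that the ellipsoid parameter $d$ cancels out. Since a reflector is concave, hence continuous, and $\bar\Omega$ is compact, the extrema $\max_{x\in\bar\Omega}\rho$ and $\min_{x\in\bar\Omega}\rho$ are attained, say at $\bar x$ and $\underline x$ respectively. I would begin by choosing a supporting ellipsoid at the minimizer.

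By the definition of $\mathcal A(\delta)$ there is an ellipsoid $E_d(P)$ with $P\in D$ and $d\geq \delta M$ that supports $\sigma$ at $\rho(\underline x)\,\underline x$; thus $\rho(x)\leq \rho_d(x)$ for all $x\in\bar\Omega$, with equality at $x=\underline x$. Since $M=\max_{P\in D}OP\geq OP$, we have $d\geq \delta M\geq \delta\,OP$, so Proposition \ref{propineq} applies to $E_d(P)$ with the constant $c_\delta$.

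The key step is the two-sided estimate. Evaluating the lower bound of Proposition \ref{propineq} at the contact point gives
$$
\rho(\underline x)=\rho_d(\underline x)\geq \frac{d}{1+c_\delta},
$$
hence $d\leq (1+c_\delta)\,\rho(\underline x)$. On the other hand, because $E_d(P)$ supports the whole reflector, the upper bound of Proposition \ref{propineq} yields
$$
\rho(\bar x)\leq \rho_d(\bar x)\leq \frac{d}{1-c_\delta}.
$$
Combining the two displays eliminates $d$ and gives exactly
$$
\max_{x\in\bar\Omega}\rho(x)=\rho(\bar x)\leq \frac{1+c_\delta}{1-c_\delta}\,\rho(\underline x)=\frac{1+c_\delta}{1-c_\delta}\,\min_{x\in\bar\Omega}\rho(x).
$$

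I do not expect a genuine obstacle here. The one thing to notice is that a single contact ellipsoid at the minimizer already bounds $\rho$ from above everywhere, through the supporting property, while being pinned to $\rho(\underline x)$ from below at the contact point, through the equality there; this is precisely what lets the common $d$ drop out. The only bookkeeping point is the verification that $d\geq \delta M$ forces $\varepsilon\leq c_\delta$, so that the estimate \eqref{impineq} of Proposition \ref{propineq} is applicable with a constant independent of $P$.
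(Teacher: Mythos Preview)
Your proof is correct and follows essentially the same approach as the paper: take a supporting ellipsoid at the minimizer, use the lower bound of Proposition~\ref{propineq} at the contact point to get $d\leq (1+c_\delta)\min\rho$, and combine with the supporting property and the upper bound to obtain $\rho(x)\leq d/(1-c_\delta)$ for all $x$.
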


\begin{proof}
Since $\sigma \in \mathcal A(\delta)$, then $\rho$ is bounded below. 
Let $x_0 \in \bar{\Omega}$ be the point where $\rho$ attains its minimum. Moreover , there exist $P_0 \in D$ and $d_0 \geq \delta M$ such that $E_{d_0}(P_0)$ supports $\sigma$ at $\rho(x_0)x_0$.
By Proposition \ref{propineq},
$
\min_{x\in \bar{\Omega}}\rho(x)=\rho(x_0)=\rho_{d_0}(x_0)
\geq \dfrac{d_0}{1+c_{\delta}}$,
and for every $x \in \bar{\Omega}$
$\rho(x) \leq \rho_{d_0}(x) \leq \dfrac{d_0}{1-c_\delta}\leq \dfrac{1+c_\delta}{1-c_{\delta}}\,\min_{x\in \bar{\Omega}}\rho(x).$
\end{proof}

We now continue  proving properties of reflectors that will permit us to define the reflector measure given in Proposition \ref{mumeasure}.

\begin{prop}\label{convprop}
Assume $O\notin D$ with $D$ compact such that either $D$ is contained on a plane or $D$ is countable.
Let $\sigma \in \mathcal{A(\delta)}$ and let $S$ be the set from
Lemma \ref{lem:aleksandrovforreflectors}. Suppose $\{x_n\}_{n=1}^\infty,x_0$ are in $\bar{\Omega} \setminus S$ and $x_n\to x_0$.
If $E_{d_n}(P_n)$ and $E_{d_0}(P_0)$ are the corresponding supporting ellipsoids to $\sigma$ at $x_n$ and $x_0$, and $\nu(x_n),\nu(x_0)$ are the corresponding unit normal vectors, then we have
\begin{enumerate}
\item $\lim_{n \to \infty} d_{n}= d_0$
\item $\lim_{n \to \infty} P_{n}=P_0$
\item $\lim_{n \to \infty} \nu(x_n)=\nu(x_0)$
\end{enumerate}
\end{prop}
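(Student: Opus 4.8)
The plan is to use a compactness argument combined with the uniqueness of the supporting ellipsoid at points outside $S$. First I would establish that the sequences $\{d_n\}$ and $\{P_n\}$ range in a compact set. Since $\sigma\in\mathcal A(\delta)$, each $d_n\geq \delta M$, which gives a lower bound bounded away from zero. For an upper bound, the supporting condition gives $\rho(x_n)=\rho_{d_n}(x_n)$, and combining $\rho_{d_n}(x_n)\geq d_n/(1+c_\delta)$ from Proposition \ref{propineq} with the boundedness of $\rho$ from Proposition \ref{lipschitz} yields $d_n\leq (1+c_\delta)\max_{\bar\Omega}\rho<\infty$. The foci $P_n$ lie in the compact target $D$. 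Hence $\{(d_n,P_n)\}$ ranges in a compact subset of $(0,\infty)\times D$, and I may extract a subsequence with $d_{n_k}\to d^*$ and $P_{n_k}\to P^*$. Because $O\notin D$ and $D$ is compact, $OP^*\geq \dist(O,D)>0$, and $d^*\geq \delta M\geq \delta\,OP^*$, so $E_{d^*}(P^*)$ is a genuine, nondegenerate ellipsoid with eccentricity at most $c_\delta$.

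Next I would pass to the limit to show that $E_{d^*}(P^*)$ supports $\sigma$ at $x_0$. The polar radius $\rho_d(x)=\dfrac{d}{1-\varepsilon\,x\cdot m}$ depends jointly continuously on $(d,P,x)$: the eccentricity $\varepsilon$ is a continuous function of $d$ and $OP$ by \eqref{epsequ}, $m=\overrightarrow{OP}/OP$ is continuous in $P$ for $P\neq O$, and the denominator is bounded below by $1-c_\delta>0$ by \eqref{epsbdd}. Fixing $x$ and letting $k\to\infty$ in the supporting inequality $\rho_{d_{n_k}}(x)\geq \rho(x)$ gives $\rho_{d^*}(x)\geq\rho(x)$ for all $x\in\bar\Omega$. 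For the contact equality, joint continuity together with $x_{n_k}\to x_0$ gives $\rho_{d_{n_k}}(x_{n_k})\to \rho_{d^*}(x_0)$, while continuity of $\rho$ gives $\rho(x_{n_k})\to\rho(x_0)$; since $\rho_{d_{n_k}}(x_{n_k})=\rho(x_{n_k})$, we obtain $\rho_{d^*}(x_0)=\rho(x_0)$. Thus $E_{d^*}(P^*)$ touches $\sigma$ from outside at $x_0$, i.e. it supports $\sigma$ there.

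Finally, since $x_0\in\bar\Omega\setminus S$, the supporting ellipsoid at $x_0$ has a unique focus: if $P^*\neq P_0$ then $x_0\in\tau_\sigma(P^*)\cap\tau_\sigma(P_0)$, placing $x_0$ in $S$, a contradiction; hence $P^*=P_0$. With the foci $O,P_0$ now fixed, both $E_{d^*}(P_0)$ and $E_{d_0}(P_0)$ pass through $\rho(x_0)x_0$, and since $d\mapsto\rho_d(x_0)$ is strictly increasing by Proposition \ref{propmonot}, this forces $d^*=d_0$. As this limit is the same for every convergent subsequence and $\{(d_n,P_n)\}$ lives in a compact set, the whole sequence converges to $(d_0,P_0)$, proving (1) and (2). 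Part (3) then follows from the normal formula \eqref{nueq}, $\nu(x_n)=\dfrac{x_n-\varepsilon_n m_n}{|x_n-\varepsilon_n m_n|}$, where $|x_n-\varepsilon_n m_n|\geq 1-c_\delta>0$, so continuity in $(\varepsilon,m,x)$ together with $\varepsilon_n\to\varepsilon_0$ and $m_n\to m_0$ gives $\nu(x_n)\to\nu(x_0)$. The main obstacle is the limit-passage step: one must verify that the limiting ellipsoid does not degenerate and that both the supporting inequality and the contact equality survive in the limit. The uniform control $\varepsilon_n\leq c_\delta<1$ coming from $\mathcal A(\delta)$ is exactly what keeps the denominators bounded away from zero, makes all the convergences well behaved, and guarantees the limit ellipsoid is admissible.
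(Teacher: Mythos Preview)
Your proposal is correct and follows essentially the same compactness-plus-uniqueness strategy as the paper: extract a convergent subsequence of $(d_n,P_n)$, show the limit ellipsoid supports $\sigma$ at $x_0$, invoke $x_0\notin S$ to identify the limit focus as $P_0$, then conclude the whole sequence converges and deduce (3) from \eqref{nueq}. The only cosmetic differences are that the paper obtains convergence of $d_n$ indirectly via $c_n=OM_n+M_nP_n$ and $\varepsilon_n$, whereas you bound $d_n$ directly using Proposition~\ref{propineq} and the boundedness of $\rho$ from Proposition~\ref{lipschitz}; and the paper argues $d^*=d_0$ by noting two confocal ellipsoids through a common point coincide, while you use the strict monotonicity of Proposition~\ref{propmonot}.
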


\begin{proof}
We prove that for every subsequence $\{n_k\}$ there exists a sub-subsequence $\{n_{k_l}\}$ for which (1)-(3) hold.
Indeed, we have $\lim_{n \to \infty} x_{n_k} = x_0$.
Since $D$ is compact,
there exists a subsequence $P_{n_{k_l}}$ converging to some $P'_0\in D$. Since $OP'_0 >0$, we then have
$m_{n_{k_l}} = \dfrac{\overrightarrow{OP_{n_{k_l}}}}{OP_{n_{k_l}}}\to m'_0 =\dfrac{\overrightarrow{OP'_0}}{OP'_0} $.

Let $M_{n_{k_l}}=\rho(x_{n_{k_l}})x_{n_{k_l}}$, and $M_0=\rho(x_0)x_0$, then by continuity of $\rho$, $M_{n_{k_l}}\to M_0$
as $l\to +\infty$. Then $c_{n_{k_l}}= OM_{n_{k_l}} + M_{n_{k_l}}P_{n_{k_l}}\to c'_0=OM_0+M_0P'_0>0$. 
By definition of eccentricity, $\varepsilon_{n_{k_l}} =\dfrac{OP_{n_{k_l}}}{c_{n_{k_l}}}$, and $\varepsilon_{n_{k_l}}\to \varepsilon'_0=\dfrac{OP'_0}{c'_0}$.
Notice that since $\sigma\in \mathcal A(\delta)$, we have $\varepsilon_{n_{k_l}}\leq {c_{\delta}} <1$,  and so $0<\varepsilon'_0\leq c_{\delta}<1$ because $OP'_0>0$.
In addition, by equation (\ref{eqop}), $d_{n_{k_l}}\to d'_0$ with $OP_0'=\dfrac{2\varepsilon_0'd_0'}{1-\varepsilon_0'^2}$, 
obtaining $d'_0\geq \delta M$.

Let us now consider the ellipsoid $E_{d'_0}(P'_0)$. Note that
$\rho(x) \leq \dfrac{d_{n_{k_l}}}{1-\varepsilon_{n_{k_l}}\, x\cdot  m_{n_{k_l}}}$ for all $x\in \bar{\Omega}$, with equality at $x=x_{n_{k_l}}$.
If $l\to +\infty$, we then obtain that
$\rho(x) \leq \dfrac{d'_0}{1-\varepsilon'_0\, x\cdot m'_0 }$ for all $x\in \bar{\Omega}$, with equality at $x=x_0$.
Therefore $x_0 \in \tau_{\sigma}(P'_0) \cap \tau_{\sigma}(P_0)$, but since $x_0 \in S^c$ we get $P'_0=P_0$.
The ellipsoids $E_{d'_0}(P_0), E_{d_0}(P_0)$ have the same foci and a common point $\rho(x_0)x_0$, then they are identical and
so $d'_0=d_0$, $\varepsilon'_0=\varepsilon_0$, and $m'_0 =m_0$. Therefore, by equation (\ref{nueq}), $\nu(x_{n_{k_l}})\to \nu(x_0)$ as $l\to \infty$.

 Hence the proposition follows.
\end{proof}

\begin{defi}\label{sigmaalgeb}
Let $\delta>0$ and let $D$ be compact with $O\notin D$. Given $\sigma\in\mathcal{A(\delta)}$, we define
$\mathcal{S_\sigma}=\{\ E\subseteq D:\text{$\tau_\sigma (E)$ is Lebesgue measurable} \}$.
\end{defi}

\begin{lem}\label{nonempty}
If $E \subseteq D$ is closed, then  $E \in \mathcal{S_{\sigma}}$.
\end{lem}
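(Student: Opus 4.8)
The plan is to prove the stronger statement that $\tau_\sigma(E)$ is \emph{closed} in $\bar{\Omega}$ whenever $E\subseteq D$ is closed; since closed sets are Lebesgue measurable, this gives $E\in\mathcal S_\sigma$ at once. First I note that $E$, being a closed subset of the compact set $D$, is itself compact, and that $\bar{\Omega}$ is compact as a closed subset of $S^2$. To establish closedness I would take a sequence $\{x_n\}\subseteq\tau_\sigma(E)$ with $x_n\to x_0$ (so $x_0\in\bar{\Omega}$) and aim to produce a point $P_0\in E$ together with a supporting ellipsoid $E_{d_0}(P_0)$ to $\sigma$ at $\rho(x_0)x_0$; this yields $x_0\in\tau_\sigma(P_0)\subseteq\tau_\sigma(E)$, as desired. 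By definition of $\tau_\sigma$, for each $n$ there is $P_n\in E$ and an ellipsoid $E_{d_n}(P_n)$ supporting $\sigma$ at $\rho(x_n)x_n$, that is, $\rho(x)\le \rho_{d_n}(x)=\dfrac{d_n}{1-\varepsilon_n\,x\cdot m_n}$ for all $x\in\bar{\Omega}$, with equality at $x=x_n$. Using compactness of $E$, I would pass to a subsequence (still indexed by $n$) with $P_n\to P_0\in E$.

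The heart of the argument is a limiting procedure in the spirit of Proposition \ref{convprop}, but simpler since no uniqueness is required here. Because $O\notin D$ we have $OP_0>0$, so $m_n=\overrightarrow{OP_n}/OP_n\to m_0=\overrightarrow{OP_0}/OP_0$. Continuity of $\rho$ gives $M_n:=\rho(x_n)x_n\to M_0:=\rho(x_0)x_0$, whence $c_n=OM_n+M_nP_n\to c_0=OM_0+M_0P_0>0$ and $\varepsilon_n=OP_n/c_n\to\varepsilon_0=OP_0/c_0$; since $\sigma\in\mathcal A(\delta)$ we have $\varepsilon_n\le c_\delta<1$, so $0<\varepsilon_0\le c_\delta<1$, and by \eqref{eqop} the radii converge, $d_n\to d_0$ with $d_0\ge\delta M$. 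Passing to the limit in $\rho(x)\le\rho_{d_n}(x)$ for each fixed $x$, and in the equality $\rho(x_n)=\rho_{d_n}(x_n)$ (using continuity of $\rho$ and of the polar formula \eqref{ellipse} in its parameters), I obtain $\rho(x)\le\rho_{d_0}(x)$ for all $x\in\bar{\Omega}$ with equality at $x_0$. Thus $E_{d_0}(P_0)$ supports $\sigma$ at $\rho(x_0)x_0$, giving $x_0\in\tau_\sigma(E)$.

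The one delicate point will be ensuring the limiting ellipsoid does not degenerate, i.e. that $c_0>0$ and that $\varepsilon_0$ stays strictly below $1$; both are forced by the standing hypotheses, namely $O\notin D$ (so $c_0\ge OP_0>0$) and $\sigma\in\mathcal A(\delta)$ (so $\varepsilon_n\le c_\delta<1$ uniformly by \eqref{epsbdd}). Everything else is a routine passage to the limit in \eqref{ellipse}, after which the closedness of $\tau_\sigma(E)$, and hence its Lebesgue measurability, follows immediately.
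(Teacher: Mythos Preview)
Your proposal is correct and follows essentially the same approach as the paper: both prove that $\tau_\sigma(E)$ is closed by taking a convergent sequence $x_n\to x_0$, extracting a subsequence of supporting ellipsoids $E_{d_n}(P_n)$ with $P_n\in E$ converging (via compactness of $E$ and the continuity arguments in the proof of Proposition~\ref{convprop}) to a limit ellipsoid $E_{d_0}(P_0)$ that supports $\sigma$ at $\rho(x_0)x_0$. The paper simply refers back to the proof of Proposition~\ref{convprop} for the limiting procedure, whereas you spell out those details explicitly; the content is the same.
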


\begin{proof}
It's enough to prove that $\tau_{\sigma}(E)$ is a closed subset of $\bar{\Omega}$.
If $\tau_{\sigma}(E) = \emptyset$, the result is trivial.
Let $x_n$ be a sequence in $\tau_{\sigma}(E)$ with $x_n\to x_0$,  and let $E_{d_n} (P_n)$ ellipsoids supporting $\sigma$ at $\rho(x_n)x_n$. 
Since $E$ is compact, we can use the proof of Proposition \ref{convprop} to show that there exist a subsequence of ellipsoids
$E_{d_{n_k}}(P_{n_k})$ converging to some ellipsoid $E_{d_0}(P_0)$ such that $d_0\geq \delta M$, $P_0 \in E$, and $E_{d_0}(P_0)$ supports $\sigma$ at $\rho(x_0)x_0$. Therefore $x_0 \in
\tau_{\sigma}(E)$.
\end{proof}

\begin{prop}\label{propsigma}
If $O\notin D$ with $D$ a compact set  contained either on a plane or $D$ is countable, then
 $\mathcal{S_{\sigma}}$ is a sigma-algebra on D containing all Borel sets.
\end{prop}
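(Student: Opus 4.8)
The plan is to verify directly the three defining properties of a sigma-algebra for $\mathcal{S}_\sigma$ and then deduce that it contains the Borel sets from the fact that it already contains all closed subsets of $D$. The structural fact driving everything is that the tracing mapping commutes with arbitrary unions: from the definition $\tau_\sigma(P)=\{x\in\bar\Omega: P\in\mathcal N_\sigma(x)\}$ one has, for any family $\{E_i\}$ of subsets of $D$,
\[
\tau_\sigma\Bigl(\bigcup_i E_i\Bigr)=\bigcup_i \tau_\sigma(E_i),
\]
because a direction $x$ is reflected into $\cup_i E_i$ if and only if it is reflected into some $E_i$. Consequently, if $E_i\in\mathcal S_\sigma$ for $i\in\mathbb{N}$, then $\tau_\sigma(\cup_i E_i)$ is a countable union of Lebesgue measurable sets and hence measurable, so $\mathcal S_\sigma$ is closed under countable unions. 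Moreover $D$ is compact, hence closed, so $D\in\mathcal S_\sigma$ by Lemma \ref{nonempty}.

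The main obstacle is closure under complements, since $\tau_\sigma$ need \emph{not} commute with set difference: a single direction $x$ may be reflected by $\sigma$ onto more than one point of $D$, so the reflector image $\mathcal N_\sigma(x)$ can be multivalued and the sets $\tau_\sigma(E)$ and $\tau_\sigma(D\setminus E)$ can genuinely overlap. This is exactly where the hypothesis that $D$ lies on a plane or is countable is used, through Proposition \ref{propmeasu}. I would fix $E\in\mathcal S_\sigma$ and set $Z=\tau_\sigma(E)\cap\tau_\sigma(D\setminus E)$; since $E$ and $D\setminus E$ are disjoint, Proposition \ref{propmeasu} gives $|Z|=0$. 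Using $D=E\cup(D\setminus E)$ and the union identity above, $\tau_\sigma(D)=\tau_\sigma(E)\cup\tau_\sigma(D\setminus E)$, and a routine check of the two inclusions yields
\[
\tau_\sigma(D\setminus E)=\bigl(\tau_\sigma(D)\setminus\tau_\sigma(E)\bigr)\cup Z.
\]
Now $\tau_\sigma(D)$ is measurable (indeed closed, by Lemma \ref{nonempty}) and $\tau_\sigma(E)$ is measurable by hypothesis, so $\tau_\sigma(D)\setminus\tau_\sigma(E)$ is measurable; and $Z$ is measurable because it is a Lebesgue null set and Lebesgue measure is complete. Hence $\tau_\sigma(D\setminus E)$ is measurable, i.e. $D\setminus E\in\mathcal S_\sigma$, which together with the previous paragraph shows $\mathcal S_\sigma$ is a sigma-algebra on $D$.

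For the final claim, I would invoke Lemma \ref{nonempty} once more: every closed subset of $D$ belongs to $\mathcal S_\sigma$. Since $D$ is a metric space, every open subset of $D$ is an $F_\sigma$, hence a countable union of closed sets and therefore in $\mathcal S_\sigma$ by the union property. As the open sets generate the Borel sigma-algebra of $D$, and $\mathcal S_\sigma$ is a sigma-algebra containing this generating family, it contains all Borel subsets of $D$, completing the argument. The only subtle point throughout is the complement step, and it is handled cleanly by combining the measure-zero overlap from Proposition \ref{propmeasu} with the completeness of Lebesgue measure.
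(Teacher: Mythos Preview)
Your argument is correct and follows essentially the same route as the paper's proof: the same union identity for $\tau_\sigma$, the same decomposition for the complement step (the paper writes $\tau_\sigma(E^c)=(\tau_\sigma(E))^c\cup(\tau_\sigma(E^c)\cap\tau_\sigma(E))$, which coincides with your formula since $\tau_\sigma(D)=\bar\Omega$), and the same appeal to Lemma~\ref{nonempty} and Proposition~\ref{propmeasu}. Your $F_\sigma$ detour at the end is harmless but unnecessary once $\mathcal S_\sigma$ is known to be a sigma-algebra containing all closed sets.
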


\begin{proof}
By Lemma \ref{nonempty},
 $\mathcal{S_{\sigma}}$ is non empty.
That $\mathcal{S_{\sigma}}$ is closed under countable unions is immediate since
$\tau_\sigma \left(\cup_{i=1}^{+\infty} F_i\right)=\cup_{i=1}^{+
\infty} \tau_\sigma (F_i)$.
To show that $\mathcal{S}_\sigma$ is closed under complements, let $E\in \mathcal{S}_\sigma$.
We have
\begin{align*}
\tau_\sigma(E^c)&=\mathcal{N_\sigma} ^{-1}(E^c)\\
&= \{\ x:\mathcal{N_\sigma} (x) \cap E=\emptyset\}\cup \{\ x: \mathcal{N_\sigma} (x) \cap E \neq \emptyset, \mathcal{N_\sigma} (x) \cap E^c \neq \emptyset \}\ \\
&= \left(\tau_\sigma (E)\right)^c \cup \left(\tau_\sigma(E^c) \cap \tau_\sigma (E)\right),
\end{align*}
and the measurability of $\tau_\sigma(E^c)$ follows from Proposition \ref{propmeasu}.


By Proposition \ref{nonempty}, $\mathcal S_\sigma$ contains all closed sets, hence it contains all Borel sets.
\end{proof}

We are now ready to define the notion of reflector measure.

\begin{prop}\label{mumeasure}
Assume the target $D$ is a compact set with $O\notin D$ such that $D$ is contained on a plane or $D$ is countable.
Let $f\in L^1(\bar{\Omega})$ be non-negative, and let $\sigma$ be a reflector in $\mathcal{A(\delta)}$ for some $\delta>0$.
We define
\[
\mu(E)=\int_{\tau_{\sigma}(E)} f(x)\, \dfrac{ x\cdot \nu(x)}{\rho ^2(x)}\,dx
\] for each Borel set $E$.
Then $\mu$ is a finite Borel measure on $D$.
\end{prop}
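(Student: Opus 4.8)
The plan is to verify the three defining properties of a finite measure — well-definedness with finite nonnegative values, $\mu(\emptyset)=0$, and countable additivity — on the $\sigma$-algebra of Borel subsets of $D$, which by Proposition \ref{propsigma} is contained in the domain $\mathcal S_\sigma$ on which $\tau_\sigma$ produces Lebesgue measurable sets.

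First I would check that $\mu(E)$ is a well-defined finite nonnegative number for every Borel set $E$. By Proposition \ref{propsigma} the domain of integration $\tau_\sigma(E)$ is Lebesgue measurable, so the integral makes sense. Since $\sigma\in\mathcal A(\delta)$, Proposition \ref{lipschitz} gives that $\rho$ is globally Lipschitz, hence differentiable a.e.\ by Rademacher's theorem, so $\nu$ is defined and measurable a.e.\ on $\bar\Omega$. The integrand is nonnegative: from the normal formula \eqref{nueq} we have $x\cdot\nu(x)=\dfrac{1-\varepsilon\,x\cdot m}{|x-\varepsilon\, m|}>0$ because $\varepsilon<1$, and in fact $0<x\cdot\nu(x)\le 1$ since $x$ and $\nu(x)$ are unit vectors. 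Moreover Proposition \ref{propineq} bounds $\rho$ below by $\dfrac{\delta M}{1+c_\delta}>0$, so
\[
0\le f(x)\,\frac{x\cdot\nu(x)}{\rho^2(x)}\le \left(\frac{1+c_\delta}{\delta M}\right)^2 f(x),
\]
and since $f\in L^1(\bar\Omega)$ the integral defining $\mu(E)$ is finite; taking $E=D$ gives $\mu(D)<\infty$. The equality $\tau_\sigma(\emptyset)=\emptyset$ gives $\mu(\emptyset)=0$.

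The heart of the argument is countable additivity. Let $\{E_i\}_{i\ge1}$ be pairwise disjoint Borel subsets of $D$. As observed in the proof of Proposition \ref{propsigma}, the tracing mapping commutes with unions, $\tau_\sigma\bigl(\cup_i E_i\bigr)=\cup_i \tau_\sigma(E_i)$, but the sets $\tau_\sigma(E_i)$ need not be disjoint. Here Proposition \ref{propmeasu} is decisive: since the $E_i$ are disjoint, each intersection $\tau_\sigma(E_i)\cap\tau_\sigma(E_j)$ with $i\ne j$ has Lebesgue measure zero. Writing $g(x)=f(x)\,\dfrac{x\cdot\nu(x)}{\rho^2(x)}\ge0$, I would disjointify by setting $B_i=\tau_\sigma(E_i)\setminus\bigcup_{j<i}\tau_\sigma(E_j)$; then the $B_i$ are disjoint, $\cup_i B_i=\cup_i\tau_\sigma(E_i)$, and $\tau_\sigma(E_i)\setminus B_i\subseteq\bigcup_{j<i}\bigl(\tau_\sigma(E_i)\cap\tau_\sigma(E_j)\bigr)$ is null, so $\int_{B_i}g=\int_{\tau_\sigma(E_i)}g$. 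Countable additivity of the Lebesgue integral for the nonnegative $g$ over the disjoint family $\{B_i\}$ then yields
\[
\mu\Bigl(\cup_i E_i\Bigr)=\int_{\cup_i B_i} g\,dx=\sum_i\int_{B_i} g\,dx=\sum_i\int_{\tau_\sigma(E_i)} g\,dx=\sum_i\mu(E_i).
\]

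The only genuine obstacle is this overlap issue, and it is precisely where the structural hypothesis on $D$ (contained in a plane, or countable) enters: without it Proposition \ref{propmeasu}, which rests on Lemma \ref{lem:aleksandrovforreflectors}, may fail, as Remark \ref{rmk:counterexampletoaleksandrov} shows, and then the $\tau_\sigma(E_i)$ could overlap on sets of positive measure, destroying additivity. Everything else — measurability of the domains, and measurability, nonnegativity and boundedness of the integrand — is routine given Propositions \ref{propsigma}, \ref{lipschitz}, \ref{propineq} and the normal formula \eqref{nueq}.
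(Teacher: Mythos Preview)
Your proof is correct and follows essentially the paper's approach: measurability of $\tau_\sigma(E)$ via Proposition~\ref{propsigma}, nonnegativity and integrability of the integrand via \eqref{nueq} and Proposition~\ref{propineq}, and countable additivity from the null-overlap statement of Proposition~\ref{propmeasu}. The only cosmetic differences are that the paper obtains measurability of the integrand from the continuity result in Proposition~\ref{convprop} rather than from Rademacher's theorem, and your disjointification for additivity is written out more explicitly than the paper's one-line appeal to Proposition~\ref{propmeasu}.
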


\begin{proof}
By Proposition \ref{propsigma}, $\tau_{\sigma} (E)$ is Lebesgue measurable for each Borel set $E$.
By Proposition \ref{convprop} the function $x\cdot \nu(x)$ is continuous relative to $\bar{\Omega}\setminus S$. Also since $\sigma \in A(\delta)$ then $\rho$ is continuous and bounded below, it follows that the function $\dfrac{x\cdot \nu(x)}{\rho ^2(x)}$ is continuous relative to $\bar{\Omega}\setminus S$.

Let $x\in \bar{\Omega} \setminus S$,  by \eqref{nueq} and Proposition \ref{propineq}, we have
$x\cdot \nu(x) =\dfrac{1-\varepsilon \, x\cdot m}{|x-\varepsilon m|}\geq \dfrac{1-c_{\delta}}{1+c_\delta}>0$, where
$\varepsilon$ is the eccentricity of the supporting ellipsoid to $\sigma$ at $\rho(x)x$.
From Proposition \ref{propmeasu}, $|S|=0$ and therefore $\mu(E)$ is well defined for each $E$ Borel set and is non negative.
To prove the sigma additivity of $\mu$, let $E_1,E_2,...$ be countable mutually disjoint sequence of Borel sets.
 Then by Proposition \ref{propmeasu}, $\mu(E_i \cap E_j)=0$ for all $i\neq j$,
 and hence
 $\mu\left(\cup_{i=1}^{+\infty} E_i\right) = \sum_{i=1}^{+\infty} \mu(E_i)$.
\end{proof}

To complete the list of properties of the reflector measure, we show the following stability property.
\begin{prop} \label{weaklem}
Suppose $D$ is a compact set with $O\notin D$ and such that $D$ is contained on a plane or $D$ countable, and let $f\in L^1(\bar \Omega)$ be non-negative.
Let $\sigma_n$ be a sequence of reflectors in $\mathcal{A(\delta)}$ for some fixed $\delta>0$, where $\sigma_n=\{ \rho_n (x)\,x \}_{x\in \bar{\Omega}}$ are such that $\rho_n(x)\leq b$ for all $x \in \bar{\Omega}$, for all $n$ and for some $b>0$, and $\rho_n$ converges point-wise to $\rho$ in $\bar{\Omega}$.
Let $\sigma=\{\rho(x)x\}_{x\in \bar{\Omega}}$. Then we have
\begin{enumerate}
\item $\sigma\in \mathcal{A(\delta)}$, i.e., for all $x \in \bar{\Omega}$ there exist $P$ in $D$ and $d \geq \delta M$ such that $E_{d}(P)$ supports $\sigma$ at $\rho(x)x$.
\item If $\mu$ is the reflector measure corresponding to $\sigma$, then $\mu_n$ converges weakly to $\mu$ .
\end{enumerate}
\end{prop}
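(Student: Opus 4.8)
The plan is to reduce both assertions to a single compactness-and-upper-semicontinuity statement for supporting ellipsoids along the sequence $\sigma_n$, which is the analogue of Proposition \ref{convprop} but now with \emph{varying} reflectors. Precisely, I would first prove the following claim: if $x_n\to x_0$ in $\bar\Omega$ and $E_{d_n}(P_n)$ supports $\sigma_n$ at $\rho_n(x_n)\,x_n$ with $P_n\in D$ and $d_n\geq\delta M$, then along a subsequence $(d_n,P_n)\to(d_0,P_0)$ with $P_0\in D$, $d_0\geq\delta M$, and $E_{d_0}(P_0)$ supports $\sigma$ at $\rho(x_0)\,x_0$. To see this, note that $\rho_n(x_n)\leq b$ together with Proposition \ref{propineq} (giving $\rho_n(x_n)=\rho_{d_n}(x_n)\geq d_n/(1+c_\delta)$) confines the $d_n$ to the compact interval $[\delta M,(1+c_\delta)b]$, while $P_n$ ranges in the compact set $D$. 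Extracting a subsequence with $d_n\to d_0$ and $P_n\to P_0$, and using $OP_0\geq\dist(O,D)>0$, we get $m_n\to m_0$ and, by \eqref{epsequ}, $\varepsilon_n\to\varepsilon_0\leq c_\delta<1$, so $E_{d_0}(P_0)$ is a genuine ellipsoid with $d_0\geq\delta M$. Passing to the limit in $\rho_n(x)\leq d_n/(1-\varepsilon_n\,x\cdot m_n)$ for each fixed $x$ yields $\rho(x)\leq\rho_{d_0}(x)$ everywhere; and since the $\rho_n$ are equi-Lipschitz (Proposition \ref{lipschitz}, with constant depending only on $\delta,M$), $x_n\to x_0$ and pointwise convergence give $\rho_n(x_n)\to\rho(x_0)$, so equality holds at $x_0$. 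This proves the claim.

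Part (1) is then immediate by applying the claim with $x_n\equiv x_0$. For part (2) I would verify the two Portmanteau conditions for finite Borel measures on the compact metric space $D$: that $\limsup_n\mu_n(K)\leq\mu(K)$ for every closed $K\subseteq D$, and that $\mu_n(D)\to\mu(D)$. Two ingredients feed in. The first is pointwise a.e.\ convergence of the integrands $g_n(x):=f(x)\,\dfrac{x\cdot\nu_n(x)}{\rho_n^2(x)}\to g(x):=f(x)\,\dfrac{x\cdot\nu(x)}{\rho^2(x)}$. Off the null set $S\cup\bigcup_n N_n$ (where $S$ is from Lemma \ref{lem:aleksandrovforreflectors} and $N_n$ is the non-differentiability set of $\sigma_n$), the claim applied with $x_n\equiv x_0$ shows every subsequential limit of the supporting data of $\sigma_n$ at $x_0$ is supporting data of $\sigma$ at $x_0$; since $x_0\notin S$ forces a unique supporting ellipsoid to $\sigma$ (unique focus, and the contact point determines the ellipsoid), the full sequence $(d_n,P_n,\varepsilon_n,m_n)$ converges, whence $\nu_n(x_0)\to\nu(x_0)$ by \eqref{nueq} and $g_n(x_0)\to g(x_0)$. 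The second ingredient is upper semicontinuity of the traces: taking $x_n\equiv x_0$ with $P_n\in K$ closed, the claim shows that if $x_0\in\tau_{\sigma_n}(K)$ for infinitely many $n$ then $x_0\in\tau_\sigma(K)$, hence $\limsup_n\mathbf 1_{\tau_{\sigma_n}(K)}\leq\mathbf 1_{\tau_\sigma(K)}$ pointwise (the sets $\tau_{\sigma_n}(K)$ being closed, hence measurable, by Lemma \ref{nonempty}).

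Finally I would combine these. The uniform lower bound $\rho_n\geq a:=\delta M/(1+c_\delta)$ (Proposition \ref{propineq} and Definition \ref{def:A(delta)}) together with $x\cdot\nu_n\leq 1$ gives the integrable domination $0\leq g_n\leq f/a^2\in L^1$. Reverse Fatou then yields $\limsup_n\mu_n(K)=\limsup_n\int g_n\,\mathbf 1_{\tau_{\sigma_n}(K)}\leq\int\limsup_n\big(g_n\,\mathbf 1_{\tau_{\sigma_n}(K)}\big)\leq\int g\,\mathbf 1_{\tau_\sigma(K)}=\mu(K)$, where the last inequality uses $g_n\to g$ a.e.\ (so $\limsup_n g_n\mathbf 1_{\tau_{\sigma_n}(K)}=g\,\limsup_n\mathbf 1_{\tau_{\sigma_n}(K)}\leq g\,\mathbf 1_{\tau_\sigma(K)}$ a.e.). For the total masses, $\tau_{\sigma_n}(D)=\tau_\sigma(D)=\bar\Omega$ by definition of reflector, so $\mu_n(D)=\int_{\bar\Omega}g_n\to\int_{\bar\Omega}g=\mu(D)$ by dominated convergence, and the Portmanteau theorem gives $\mu_n\to\mu$ weakly. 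I expect the main obstacle to be exactly the a.e.\ convergence of the normals $\nu_n\to\nu$: this is not true everywhere (it fails on $S$), and its justification hinges on coupling the compactness of supporting ellipsoids with the uniqueness-off-$S$ statement of Lemma \ref{lem:aleksandrovforreflectors}, so the null set must be isolated with care before invoking reverse Fatou.
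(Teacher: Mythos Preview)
Your proof is correct. Part (1) coincides with the paper's argument. For part (2) you take a genuinely different route: the paper writes the push-forward identity
\[
\int_D h\,d\mu_n=\int_{\bar\Omega\setminus H} h\bigl(\mathcal N_{\sigma_n}(x)\bigr)\,f(x)\,\dfrac{x\cdot\nu_n(x)}{\rho_n^2(x)}\,dx,\qquad H=\bigcup_n S_n\cup S,
\]
for $h\in C(D)$, establishes $\mathcal N_{\sigma_n}(x)\to\mathcal N_\sigma(x)$ and $\nu_n(x)\to\nu(x)$ off $H$, and passes to the limit by dominated convergence in one stroke. You instead verify the Portmanteau pair---reverse Fatou on closed sets via the pointwise inequality $\limsup_n\mathbf 1_{\tau_{\sigma_n}(K)}\leq\mathbf 1_{\tau_\sigma(K)}$, plus DCT for the total mass. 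Both arguments rest on exactly the same engine (compactness of $(d_n,P_n)$ in $[\delta M,(1+c_\delta)b]\times D$, uniqueness of the limiting supporting ellipsoid off $S$, domination by $f/a^2$). The paper's route is slightly more economical since it treats all continuous test functions at once; your route isolates the upper semicontinuity of $K\mapsto\tau_{\sigma_n}(K)$ as a separate fact and makes the role of Lemma~\ref{nonempty} explicit, which is a clean way to see why closedness of $K$ matters.
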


\begin{proof}
First notice that $\dfrac{\delta M}{1+c_{\delta}}\leq \rho_n(x)\leq b$, and so $\rho$ satisfies the same inequalities.
Let us prove (1). Take $x_0 \in \bar{\Omega}$, then there exist $P_n$ in $D$, and $d_n \geq \delta M$, such that $E_{d_n}(P_n)$ supports $\sigma_n$ at $\rho(x_0)\,x_0$. By Proposition \ref{propineq}, we have
$b\geq \rho_n(x_0)=\rho_{d_n}(x_0)\geq \dfrac{d_n}{1+c_\delta}$,
concluding that
$\delta M \leq  d_n \leq b(1+c_\delta)$.
Therefore, there exist a subsequences $d_{n_k}$, and $P_{n_k}$ converging to $d_0$, and $P_0$, respectively, such that $P_0 \in D$, and $\delta M \leq d_0 \leq b\,(1+c_\delta)$.
Moreover, by equation (\ref{epsequ}) the eccentricity of $E_{d_{n_k}}(P_{n_k})$ converges to the eccentricity $\varepsilon_0$ of $E_{d_0}(P_0)$ and $0<\varepsilon_0\leq c_\delta$, therefore $\rho_{d_{n_k}}$ converges to $\rho_{d_0}$.

On the other hand,
$\rho_{n_k}(x) \leq \rho_{d_{n_k}}(x) $, with equality at $x=x_0$. Letting $k\to +\infty$, we get that $\rho(x) \leq \rho_{d_0} (x)$ with equality at $x=x_0$.
Hence $E_{d_0}(P_0)$ supports $\sigma$ at $\rho(x_0)x_0$ where $P_0 \in D$ and $d_0 \geq \delta M$.
Hence, $\sigma \in \mathcal{A(\delta)}$.

To prove (2), let
$S_n$ and $S$ be the sets in Lemma \ref{lem:aleksandrovforreflectors}
corresponding to the reflectors $\sigma_n$ and $\sigma$, respectively.
We have $|S_n|=|S|=0$, and so
 $H:= \cup_{i=1}^{+\infty} S_n \cup S$ has surface measure zero.
For $x\in \bar{\Omega} \setminus H$, we have that $\mathcal{N}_{\sigma_n}(x)$ and $\mathcal{N_{\sigma}}(x)$ are single valued, and by Proposition \ref{convprop} the normal vectors  $\nu_n(x),\nu(x)$ are continuous relative to $\bar{\Omega}\setminus H$.
Let $h \in C(D)$, we have
$$\int_D h \,d{\mu_n}=\int_{\bar{\Omega} \setminus H} h\left(\mathcal{N}_{\sigma_n}(x)\right) \,f(x)\,\dfrac{x\cdot \nu_n(x)}{\rho_n ^2(x)}\,dx.$$
Since $h$ is continuous on a compact set,  $\rho_n(x)\geq\dfrac{\delta M }{1+c_\delta}$, and $ x\cdot \nu_n(x) \leq 1$, it follows that the integrand is bounded uniformly in $n$ by an integrable function over $\bar{\Omega}\setminus H$. So we can use Lebesgue dominated convergence theorem if the point-wise limit of the integrand exists.

By assumption, $\rho_n\to \rho$ point-wise.
We claim that $\mathcal{N}_{\sigma_n}(x)\to \mathcal{N_{\sigma}}(x)$ for each $x\in \bar{\Omega}\setminus H$.
Indeed, since $\mathcal{N}_{\sigma_n}$ and $\mathcal{N}_{\sigma}$ are single valued on $\bar{\Omega}\setminus H$,
we have $\mathcal{N}_{\sigma_n}(x)=\{P_n\}$ and $\mathcal{N}_{\sigma}(x)=\{P_0\}$ ($P_n$ and $P_0$ unique depending on $x$).
Let $P_{n_k}$ be a subsequence of $P_n$, then
from the proof of (1) above, $P_{n_k}$ has a convergent subsequence to $\bar P$ where $\bar P \in
\mathcal{N_{\sigma}}(x)$. Since $\mathcal{N}_{\sigma}$ is single valued at $x$ we get $\bar P=P_0$
and the claim is proved.
Since $h$ is continuous, we conclude that $\lim_{n \to +\infty} h(\mathcal{N}_{\sigma_n}(x))=h(\mathcal{N_{\sigma}}(x))$ for all $x \in \bar{\Omega} \setminus H$.

For each $x\in \bar{\Omega}\setminus H$, there exist a unique sequence of ellipsoids $E_{d_n}(P_n)$ that supporting $\sigma_n$ at $\rho_n(x)x$, and these ellipsoids
converge to $E_{d_0}(P_0)$ which support $\sigma$ at $\rho(x)x$.
We show that the normals $\nu_n (x)$ converge to $\nu(x)$ for $x\in \bar{\Omega}\setminus H$.
In fact,
from equation (\ref{nueq}) we only need to show that $\varepsilon_n\to \varepsilon_0$ and $m_n\to m_0$.
That $m_n\to m_0$ is a consequence that $P_n\to P_0$.
By definition of eccentricity, $\varepsilon_n=\dfrac{OP_n}{c_n}$ with $c_n=OM_n+M_nP_n$ where $M_n=\rho_n(x)x$.
Since $\rho_n(x)\to \rho(x)$ we get that $c_n\to c_0=OM_0+M_0P_0$ with $M_0=\rho(x)x$.

 We conclude that
\begin{equation*}
\lim_{n \to +\infty}\int_{\bar \Omega \setminus H} h(\mathcal{N}_{\sigma_n}(x))\,f(x)\, \dfrac{ x\cdot \nu_n(x)}{\rho_n ^2(x)}dx=\int_{\bar \Omega \setminus H} h(\mathcal{N_{\sigma}}(x)) \,f(x)\,\dfrac{ x\cdot \nu(x) }{\rho^2(x)}dx,
\end{equation*}
and so
\begin{equation*}
\lim_{n \to \infty} \int_D h \,d\mu_n = \int_D h \,d\mu \qquad \text{for each $h \in C(D)$.}
\end{equation*}
\end{proof}

\begin{cor}\label{continui}
If $D=\{P_1,P_2,...,P_N\}$ and $\sigma_n,\sigma,\mu_n,\mu$ are as in Proposition \ref{weaklem}, then
$$\lim_{n \to +\infty} \mu_n(P_i)=\mu(P_i).$$
\end{cor}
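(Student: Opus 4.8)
The plan is to obtain this as an immediate specialization of the weak convergence proved in Proposition \ref{weaklem}, using that a finite target makes the indicator functions of individual points admissible test functions.

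First I would check that the hypotheses of Proposition \ref{weaklem} are in force: a finite set $D=\{P_1,\dots,P_N\}\subseteq\R^3$ is compact and countable, does not contain $O$ by assumption, and the sequence $\sigma_n$ together with the uniform bound $\rho_n\leq b$ and the pointwise convergence $\rho_n\to\rho$ are exactly the data supplied by that proposition. Hence $\sigma\in\mathcal A(\delta)$ and $\mu_n$ converges weakly to $\mu$, that is, $\int_D h\,d\mu_n\to\int_D h\,d\mu$ for every $h\in C(D)$.

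Next I would exploit that $D$, being finite, carries the discrete topology inherited from $\R^3$, so each singleton $\{P_i\}$ is simultaneously open and closed in $D$. Consequently the indicator function of $\{P_i\}$ is continuous on $D$ and is therefore an admissible test function. Testing the weak convergence against this indicator and using that its integral against $\mu_n$ equals $\mu_n(P_i)$ and its integral against $\mu$ equals $\mu(P_i)$ yields $\mu_n(P_i)\to\mu(P_i)$ at once.

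There is essentially no obstacle to overcome; the entire content of the corollary resides in Proposition \ref{weaklem}. The only point worth recording explicitly is that weak convergence, which a priori only controls integrals of continuous functions, upgrades to convergence of the masses of individual points precisely because on a finite set the point masses are detected by continuous indicators (equivalently, every subset of $D$ is a continuity set, since its topological boundary in $D$ is empty).
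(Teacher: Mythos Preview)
Your proof is correct and follows essentially the same approach as the paper: the paper defines $h_i(P_j)=\delta_i^j$, observes this is continuous on the discrete set $D$, and applies the weak convergence from Proposition~\ref{weaklem} to conclude. Your indicator of $\{P_i\}$ is precisely this $h_i$, so the arguments coincide.
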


\begin{proof}
Define $h_i(P_j)=\delta_{i}^{j}$, $1\leq i,j\leq N$. Since $D$ is discrete, $h$ is continuous on $D$, then by the previous proposition
$$\lim_{n \to +\infty} \mu_n(P_i)=\lim_{n \to +\infty} \int_D h_i(y)d\mu_n(y)=\int_D h_i(y)d\mu(y)=\mu(P_i).$$
\end{proof}

\subsection{Physical visibility issues}\label{subset:physicalvisibilityissues}
With the Definition \ref{def:definitionofreflector} of reflector, the reflected rays might cross the reflector to reach the target, in other words, the reflector might obstruct the target in certain directions. This is illustrated in Figure \ref{fig:focioutside}.
In this Subsection, we show by convexity that if the ellipsoids used in the definition of reflector are chosen such that they contain all points of $D$ in their interiors, this obstruction can be avoided, that is, the reflector will not obstruct the target in any direction. For example, in Figure \ref{fig:fociinside} each reflected ray will not cross the reflector to reach the target.

Indeed, let $\{E_{d_i}\}_{i\in I}$ be a family of ellipsoids with foci $O$ and $P_i$, such that  the convex body $B$ enclosed by all $\{E_{d_i}\}_{i\in I}$ is a reflector.
Let us assume that all $P_i$'s are in the interior of $B$, and $D=\{P_i\}_{i\in I}$ is compact.
We shall prove that under this condition any ray emanating from $O$ is reflected into a ray that does not cross the boundary of $B$ to reach the target. 
\begin{figure}[htp]
\begin{center}
\subfigure[$P=(1,0)$, $Q=(0,1)$, $E_{1.2}(P)$ and $E_{1.4}(Q)$]{\label{fig:focioutside}\includegraphics[width=3in]{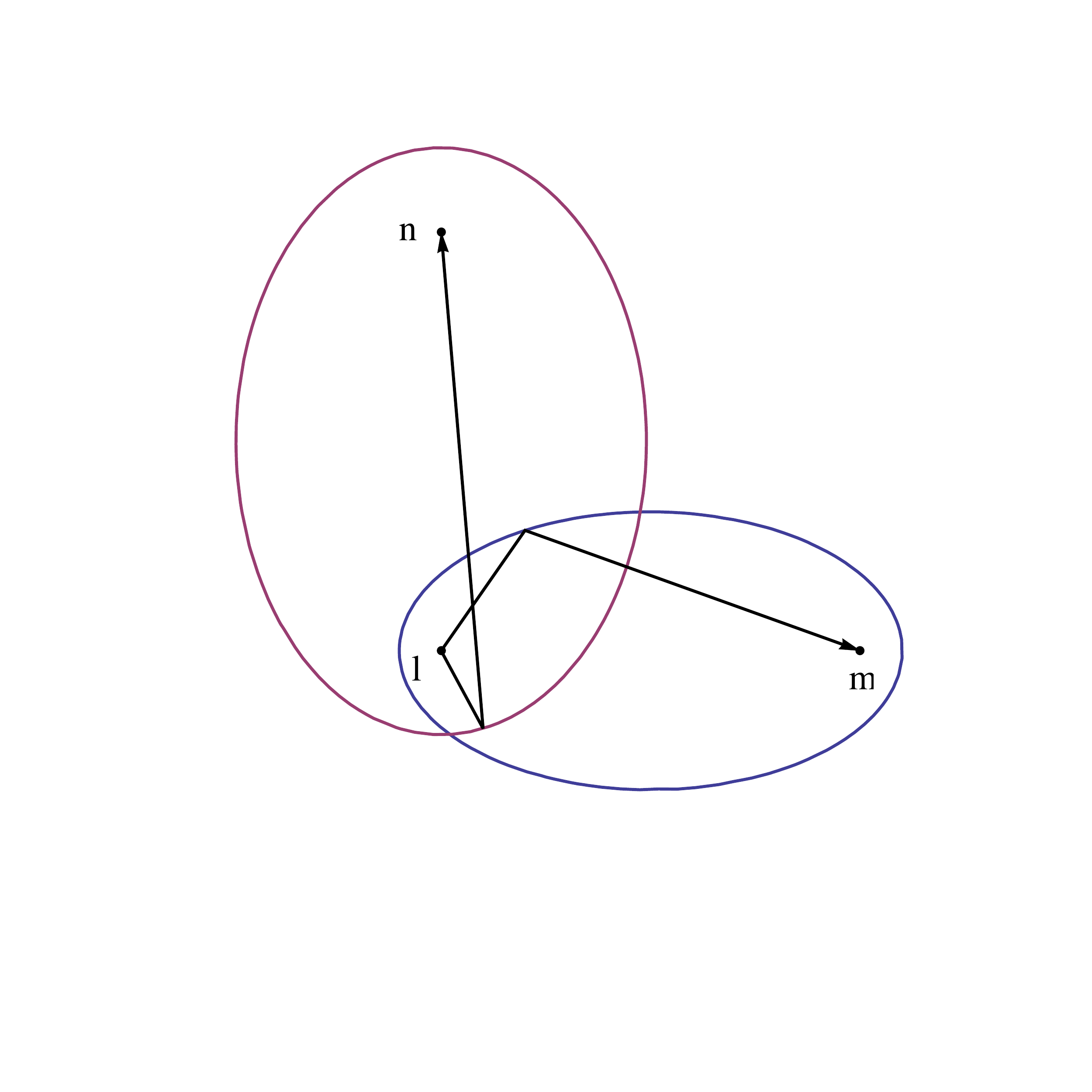}}
    \subfigure[$P=(-1,.2)$, $Q=(1,0)$, $E_{5.5}(P)$ and $E_5(Q)$]{\label{fig:fociinside}\includegraphics[width=3in]{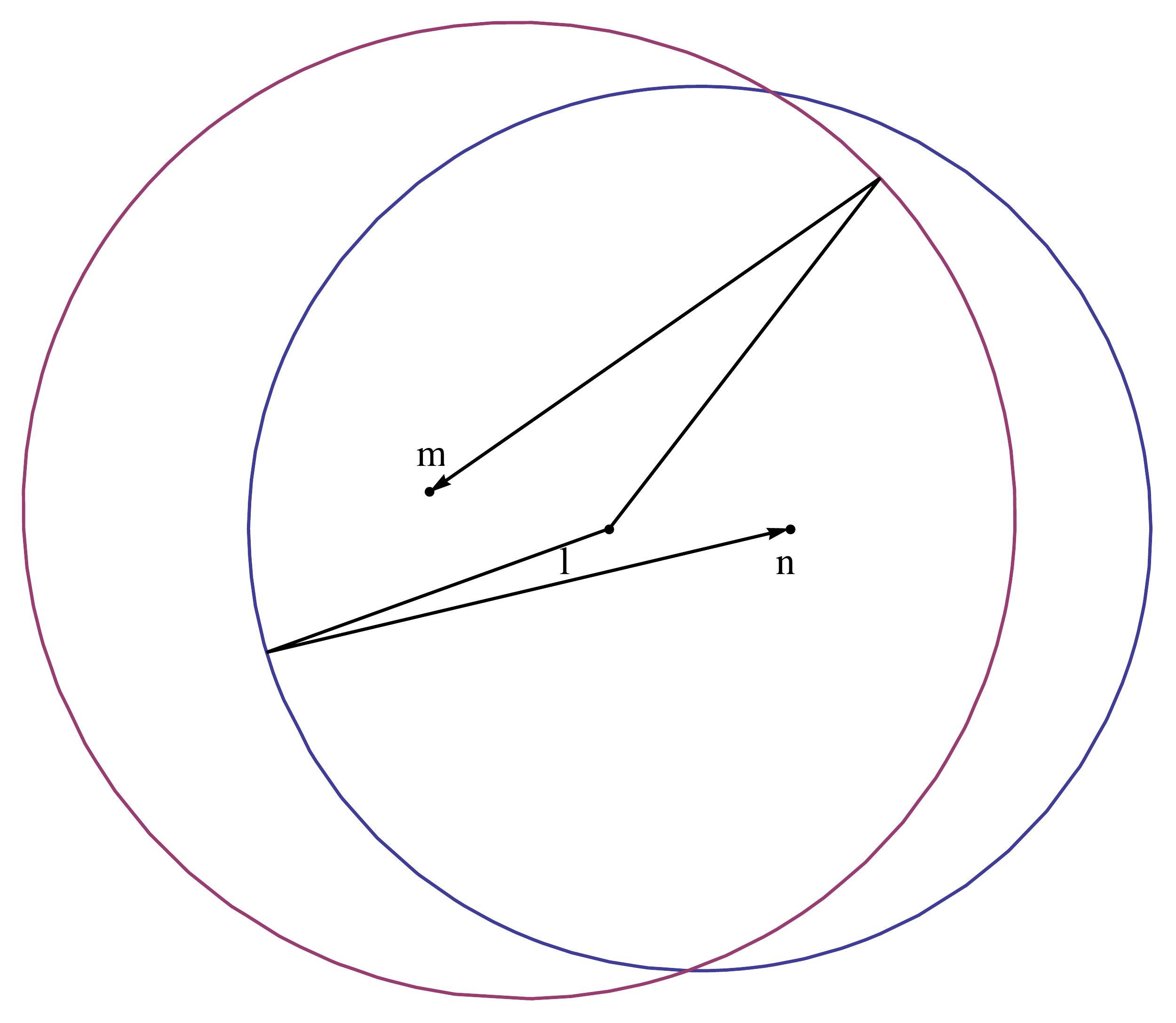}}
\end{center}
  \caption{Reflectors illustrating obstruction}
  \label{fig:obstruction}
\end{figure}
Suppose by contradiction that there is ray $r$ emanating from $O$ so that the reflected ray $r'$ crosses the boundary of $B$ to reach the target.
Then $r$ hits $\partial B$ at some point $P$,
and so $P$ is on the boundary of some ellipsoid $E_{d_i}$, and the reflected ray $r'$ crosses the boundary of $B$ at a point $Q$ to reach the target at say $P_i$.
Since $P_i$ is in the interior of $B$, $P\in \partial B$, and $B$ is convex, the segment
$(1-t)P+tP_i \in \text{Int }(B)$  for all $0<t\leq 1$. Since for some $t$, $Q$ is on this segment, then $Q$
belongs to the interior of $B$, a contradiction.



To assure that each supporting ellipsoid in the definition of reflector contain all points in $D$, we proceed as follows.
Take  $m=\min_{P\in D} OP, M=\max_{P\in D} OP$.
Let $P_i,P_j \in D$, $P_i$ is inside the body of the ellipsoid with focus $O$ and $P_j$ if and only if
\begin{align*}
OP_i+P_iP_j &<c_j=\dfrac{OP_j}{\varepsilon_j},
\end{align*}
that is, if and only if
$\varepsilon_j< \dfrac{OP_j}{OP_i+P_iP_j}$ for all $P_i,P_j \in D$.
Therefore, in the definition of reflector it is enough to choose ellipsoids with eccentricity $\varepsilon$ satisfying
\begin{equation}\label{eps}
\varepsilon < \dfrac{m}{M+\diam(D)}.
\end{equation}
By equation \eqref{eqop},
$d_j=\dfrac{(1-\varepsilon_j^2)\, OP_j}{2\,\varepsilon_j}$. Hence by monotonicity of $d$ we have that (\ref{eps}) is then equivalent to
$$d_j> \dfrac{1-\left(\dfrac{m}{M+\diam(D)}\right)^2}{2\dfrac{m}{M+\diam(D)}} OP_j \qquad \text{for all $P_j \in D$},$$
and so it is enough to choose
$d> \dfrac{1-\left(\dfrac{m}{M+\diam(D)}\right)^2}{2\,\dfrac{m}{M+\diam(D)}}\, M$.

Therefore, to avoid obstruction of the target by the reflector, we can consider reflectors in the class $\mathcal A(\delta)$ with
\begin{equation}\label{eq:assumptionondeltatoavoidobstruction}
\delta>\dfrac{1-\left(\dfrac{m}{M+\diam(D)}\right)^2}{2\,\dfrac{m}{M+\diam(D)}}:=\delta_D.
\end{equation}

To complete this Subsection, we mention the case when the target is on the way to the reflector,
that  is, the incident rays cross the target before reaching the reflector.
Clearly, this can be avoided if we assume that $\bar \Omega \cap D^*=\emptyset$, where $D^*$ is the projection of $D$ on $S^2$.
\setcounter{equation}{0}
\section{Solution of the problem in the discrete case}\label{sec:Soldis}

\begin{defi}\label{Reflector}
Let $\Omega\subseteq S^{2}$ with $|\partial \Omega|=0$, $D=\{P_1,P_2,...,P_N\}$ is such that $O\notin D$, and $M=\max_{P\in D}OP$.
Let $d_1,\cdots ,d_N$ be positive numbers and $w=(d_1,d_2,\cdots ,d_n)$.
Define the reflector $\sigma=\{\rho_w(x)x\}$, $x\in \bar{\Omega}$, where
\[
\rho_w(x)=\min_{1\leq i \leq N} \rho_{d_i}(x),
\]
with $\rho_{d_i}(x)=\dfrac{d_i}{1-\varepsilon_i\, x\cdot m_i}$, $\varepsilon_i=\sqrt{1+\dfrac{d_i^2}{OP_i^2}}-\dfrac{d_i}{OP_i}$ and $m_i=\dfrac{\overrightarrow{OP_i}}{OP_i}$, $OP_i=|\overrightarrow{OP_i}|$.
\end{defi}


\begin{lem}\label{blwbdd}
Let $0<\delta\leq \delta '$, and let $\{\rho_w(x)x\}$ be the reflector with $w=(d_1,\cdots ,d_N)$, where $d_1\leq \delta'\,M$ and $d_i \geq \delta\,M$ for $1\leq i \leq N$.
If $f\in L^1(\bar{\Omega})$ and $f>0$ a.e
then
\begin{equation}\label{cond}
\mu_w(D)=\int_{\bar{\Omega}}f(x)\,\dfrac{ x\cdot \nu_w(x)}{\rho_w ^2 (x)}\,dx > C(\delta,\delta ',M)
\int_{\bar{\Omega}} f(x)\,dx
\end{equation}
where $C(\delta,\delta',M)$ is a constant depending only on $\delta$, $\delta'$ and $M$.
\end{lem}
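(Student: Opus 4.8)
The plan is to bound the integrand $\dfrac{x\cdot \nu_w(x)}{\rho_w^2(x)}$ below by a positive constant depending only on $\delta$, $\delta'$, and $M$, pointwise for a.e.\ $x\in\bar\Omega$, and then integrate against $f$. Since $\rho_w=\min_{1\le i\le N}\rho_{d_i}$ is a minimum of finitely many smooth functions, it is differentiable a.e., and away from a set of measure zero (using Lemma \ref{lem:aleksandrovforreflectors} together with Remark \ref{rmk:countabletarget}, as $D$ is finite) there is a single ellipsoid $E_{d_j}(P_j)$ realizing the minimum at $x$; there the normal $\nu_w(x)$ coincides with the normal to $E_{d_j}(P_j)$. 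Because each $d_i\ge \delta M\ge \delta\,OP_i$ (as $OP_i\le M$), Proposition \ref{propineq} applies to every $E_{d_i}(P_i)$ and gives eccentricity $\varepsilon_i\le c_\delta<1$.

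For the numerator I would argue exactly as in the proof of Proposition \ref{mumeasure}: at such an $x$, formula \eqref{nueq} gives
\[
x\cdot\nu_w(x)=\frac{1-\varepsilon_j\,x\cdot m_j}{|x-\varepsilon_j m_j|}\ge \frac{1-\varepsilon_j}{1+\varepsilon_j}\ge \frac{1-c_\delta}{1+c_\delta}>0,
\]
using $1-\varepsilon_j\,x\cdot m_j\ge 1-\varepsilon_j$ and $|x-\varepsilon_j m_j|\le 1+\varepsilon_j$, together with the monotonicity of $t\mapsto (1-t)/(1+t)$. For the denominator I use that $\rho_w$ is a minimum, so $\rho_w(x)\le \rho_{d_1}(x)$ for every $x$; since $d_1\le \delta' M$ and $\varepsilon_1\le c_\delta$, Proposition \ref{propineq} yields
\[
\rho_w(x)\le \rho_{d_1}(x)\le \frac{d_1}{1-c_\delta}\le \frac{\delta' M}{1-c_\delta},
\]
hence $\rho_w^2(x)\le \dfrac{(\delta' M)^2}{(1-c_\delta)^2}$. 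Combining the two estimates gives the pointwise lower bound
\[
\frac{x\cdot\nu_w(x)}{\rho_w^2(x)}\ge \frac{(1-c_\delta)^3}{(1+c_\delta)\,(\delta' M)^2}=:2\,C(\delta,\delta',M)\qquad \text{a.e. in }\bar\Omega.
\]

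Integrating against $f$ then yields $\mu_w(D)\ge 2\,C(\delta,\delta',M)\int_{\bar\Omega}f\,dx$. The only point requiring care is the strict inequality in \eqref{cond}, which I would obtain from the slack built into the constant: since $f>0$ a.e.\ and $|\bar\Omega|>0$ we have $\int_{\bar\Omega}f\,dx>0$, so $2C\int_{\bar\Omega}f\,dx> C\int_{\bar\Omega}f\,dx$, and \eqref{cond} follows with $C(\delta,\delta',M)=\dfrac{(1-c_\delta)^3}{2(1+c_\delta)\,(\delta' M)^2}$, which depends only on $\delta$, $\delta'$, and $M$. I do not expect a serious obstacle here; the whole argument is just the uniform two-sided control of the supporting ellipsoids furnished by Proposition \ref{propineq}, the same mechanism already used to show that $\mu$ is a well-defined measure.
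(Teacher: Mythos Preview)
Your argument is correct for the lemma as stated, and the pointwise estimates you derive are exactly the ones in the paper: the same upper bound $\rho_w\le \dfrac{\delta' M}{1-c_\delta}$ from $\rho_w\le \rho_{d_1}$ and Proposition~\ref{propineq}, and the same lower bound $x\cdot\nu_w(x)\ge \dfrac{1-c_\delta}{1+c_\delta}$ at non-singular points via \eqref{nueq}. The one genuine difference is how the \emph{strict} inequality is obtained. You build in slack by halving the constant, setting $C(\delta,\delta',M)=\dfrac{(1-c_\delta)^3}{2(1+c_\delta)(\delta' M)^2}$, so that $\mu_w(D)\ge 2C\int f>C\int f$. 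The paper instead keeps the full constant $C(\delta,\delta',M)=\dfrac{(1-c_\delta)^3}{(1+c_\delta)(\delta' M)^2}$ and argues by contradiction: if equality held, then since $f>0$ a.e.\ one would have $\dfrac{x\cdot\nu_w(x)}{\rho_w^2(x)}=C$ a.e., which forces $\rho_w$ to be constant on a neighborhood of a non-singular point, impossible for a piece of an ellipsoid.

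Your route is more elementary and perfectly valid for the lemma in isolation, but note that the paper's sharper constant is the one that appears explicitly in the hypothesis \eqref{probcond} of Theorem~\ref{discretethmgeq} and in \eqref{probcondgeneral}. With your constant the energy condition required there would be twice as restrictive; so the paper's extra contradiction step is not cosmetic, it feeds directly into the main existence theorems.
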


\begin{proof}
From inequality (\ref{impineq}), $\rho_{d_1}(x) \leq \dfrac{d_1}{1-c_\delta} \leq \dfrac{\delta'\,M}{1-c_\delta}$,
and so by the definition of reflector $\rho_w(x)\leq \dfrac{\delta'\,M}{1-c_\delta}$.
From Proposition \ref{lipschitz}, the set of singular points of the reflector $\rho_w$ has measure zero.
So for each $x \in \Omega$ not a singular point, there exists $1\leq i \leq N$ such that
\begin{equation}\label{eq:inequalityatnonsingularpoints}
 x\cdot \nu_w(x) = x\cdot \nu_{d_i}(x)
= \dfrac{1-\varepsilon_i \, x\cdot m_i}{|x-\varepsilon_i m_i|}\geq \dfrac{1-\varepsilon_i}{1+\varepsilon_i}
\geq \dfrac{1-c_{\delta}}{1+c_{\delta}}
\end{equation}
by Propositions \ref{propineq} and \ref{propnorm}.
Therefore $\dfrac{x \cdot \nu_w(x)}{\rho_w ^2 (x)}\geq \dfrac{(1-c_\delta)^3}{(1+c_{\delta})\,(\delta'\,M)^2}:=C(\delta,\delta ',M)$ for a.e. $x$, and consequently
$$\mu_w(D)=\int_{\bar{\Omega}}f(x)\,\dfrac{ x\cdot \nu_w(x)}{\rho_w ^2 (x)}\,dx \geq C(\delta,\delta ',M)\int_{\bar{\Omega}} f(x)\,dx.$$
To prove the strict inequality, suppose by contradiction that we have equality.
Since $f>0$ a.e., we then would get that
\begin{equation}\label{eq:ratioequalconstant}
\dfrac{ x\cdot \nu_w(x)}{\rho_w ^2 (x)} = C(\delta,\delta ',M),\quad \text{for a.e. $x\in \Omega$}.
\end{equation}
If $x_0$ is not a singular point of $\rho_w$ and $x_0\notin S$ ($S$ defined in Lemma \ref{lem:aleksandrovforreflectors}), then there exists a unique $1\leq j\leq N$ such that
$\rho_w(x_0)=\rho_{d_j}(x_0)$ and $\rho_w(x_0)<\min_{i\neq j}\rho_{d_i}(x_0)$.
By continuity there exists a neighborhood $V_{x_0}$ of $x_0$ such that
$\rho_w(x)<\min_{i\neq j}\rho_{d_i}(x)$ for all $x\in V_{x_0}$ and therefore $\rho_w(x)=\rho_{d_j}(x)$ for all $x\in V_{x_0}$.
On the other hand, from \eqref{eq:inequalityatnonsingularpoints} and \eqref{eq:ratioequalconstant} we get that
$\rho_w(x)\geq \dfrac{\delta' M}{1-c_\delta}$ for $x$ outside a set of measure zero.
Since $\rho_w$ is continuous, we get that $\rho_w$ is constant on $V_{x_0}$, a contradiction.


%
\end{proof}

The following lemma is similar to \cite[Lemma 9]{kochengin-oliker:nearfieldreflector}.
\begin{lem}\label{compare}
Consider the reflectors $\sigma=\{\rho_w(x)x\}_{x\in \bar{\Omega}}$ and $\tilde{\sigma}=\{\rho_{\tilde w}(x)x\}_{x\in \bar{\Omega}}$, with
$w=(d_1,d_2,\cdots,d_l,\cdots,d_N)$ and
$\tilde{w}=(d_1,d_2,\cdots,\tilde{d_l},\cdots,d_{N})$, such that $\tilde{d_l} \leq d_l$. We write in this case $w\geq _l \tilde{w}$.

 If $\mu$ and $\tilde{\mu}$ are the corresponding reflector measures, then $\tilde{\mu}(P_i) \leq \mu(P_i)$ for $i \neq l$.
\end{lem}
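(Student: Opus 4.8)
The plan is to compare the tracing sets $\tau_\sigma(P_i)$ and $\tau_{\tilde\sigma}(P_i)$ for $i\neq l$, show that the latter is contained in the former, and then use nonnegativity of the integrand defining the reflector measure to conclude.

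First I would record the pointwise relation between the two reflectors. Since $\tilde d_l\le d_l$, Proposition \ref{propmonot} gives $\rho_{\tilde d_l}(x)\le \rho_{d_l}(x)$ for every $x$, hence $\rho_{\tilde w}(x)=\min\{\min_{j\neq l}\rho_{d_j}(x),\,\rho_{\tilde d_l}(x)\}\le \rho_w(x)$ on $\bar\Omega$. I would also observe that for the min-type reflector of Definition \ref{Reflector} the ellipsoid $E_{d_i}(P_i)$ supports $\sigma$ at $x$ exactly when $\rho_w(x)=\rho_{d_i}(x)$, since the supporting inequality $\rho_w\le\rho_{d_i}$ is automatic; thus $\tau_\sigma(P_i)=\{x\in\bar\Omega:\rho_w(x)=\rho_{d_i}(x)\}$, and likewise for $\tilde\sigma$.

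Next I would prove the key inclusion $\tau_{\tilde\sigma}(P_i)\subseteq\tau_\sigma(P_i)$ for $i\neq l$, and this holds for every point, not merely a.e. Indeed, if $x\in\tau_{\tilde\sigma}(P_i)$ then $\rho_{d_i}(x)=\rho_{\tilde w}(x)\le\rho_{d_j}(x)$ for all $j\neq l$ and also $\rho_{d_i}(x)\le\rho_{\tilde d_l}(x)\le\rho_{d_l}(x)$; combining these, $\rho_{d_i}(x)\le\rho_{d_j}(x)$ for every $j$, so $\rho_{d_i}(x)=\rho_w(x)$, i.e. $x\in\tau_\sigma(P_i)$. On this set the reflectors agree, $\rho_{\tilde w}(x)=\rho_{d_i}(x)=\rho_w(x)$, and at a.e. such point---those where $\rho$ is differentiable and which avoid the exceptional sets $S,\tilde S$ of Lemma \ref{lem:aleksandrovforreflectors}---the unique supporting ellipsoid of both reflectors is $E_{d_i}(P_i)$, so by \eqref{nueq} the normals coincide, $\nu_{\tilde w}(x)=\nu_{d_i}(x)=\nu_w(x)$. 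Consequently the integrand $f(x)\,\dfrac{x\cdot\nu(x)}{\rho^2(x)}$ is the same function for both measures on $\tau_{\tilde\sigma}(P_i)$.

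Finally, since this integrand is nonnegative (indeed $x\cdot\nu(x)>0$ by \eqref{nueq}, as in Proposition \ref{mumeasure}), the inclusion gives
\[
\tilde\mu(P_i)=\int_{\tau_{\tilde\sigma}(P_i)} f(x)\,\frac{x\cdot\nu_w(x)}{\rho_w^2(x)}\,dx\le \int_{\tau_\sigma(P_i)} f(x)\,\frac{x\cdot\nu_w(x)}{\rho_w^2(x)}\,dx=\mu(P_i),
\]
which is the assertion. The only genuine obstacle is the measure-theoretic bookkeeping: although the set inclusion is exact, to replace $\nu_{\tilde w}$ by $\nu_w$ I must discard the null set of singular points together with $S$ and $\tilde S$, where the normals may be ill defined or multivalued; Propositions \ref{propmeasu} and \ref{lipschitz} ensure these sets have measure zero, so they do not affect the integrals.
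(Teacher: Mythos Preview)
Your proof is correct and follows essentially the same route as the paper's: both establish the inclusion $\tau_{\tilde\sigma}(P_i)\subseteq\tau_\sigma(P_i)$ for $i\neq l$ via Proposition~\ref{propmonot}, observe that on this set the integrand reduces to $f(x)\,\dfrac{x\cdot\nu_{d_i}(x)}{\rho_{d_i}^2(x)}$ for both reflectors (since $\tilde d_i=d_i$), and conclude by monotonicity of the integral over nested domains. The paper is terser about the measure-theoretic caveat you spell out---it simply writes $\tilde\nu=\nu_{\tilde d_i}$ on $\tau_{\tilde\sigma}(P_i)$ without comment---but the content is the same.
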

\begin{proof}



Let $1 \leq i \leq N$ with $i \neq l $. If $x \in \tau_{\tilde{\sigma}}(P_i)$, then
$$\rho_{\tilde{d_i}}(x) \leq \rho_{\tilde{d_j}}(x) \qquad \text{ for all $1\leq j \leq N$ } .$$
For $j \neq l$, $\tilde{d_j}=d_j$, so $\rho_{\tilde{d_j}}(x)=\rho_{d_j}(x)$. For $j=l$, since $\tilde{d_l}\leq d_l$, from Proposition
\ref{propmonot} we have $\rho_{\tilde{d_l}}(x) \leq \rho_{d_l}(x)$. Then
$$\rho_{d_i}(x)=\rho_{\tilde{d_i}}(x) \leq \rho_{d_j}(x) \qquad \text{for all $1 \leq j \leq N$. }$$
Therefore, $\tau_{\tilde{\sigma}}(P_i) \subseteq \tau_{\sigma}(P_i)$ for $i\neq l$, and so
\begin{align*}
\tilde{\mu}(P_i)&=\int_{\tau_{\tilde{\sigma}}(P_i)} f(x)\, \dfrac{ x\cdot \tilde{\nu}(x)}{\tilde{\rho}^2(x)}\,dx
=\int_{\tau_{\tilde{\sigma}}(P_i)} f(x)\, \dfrac{ x\cdot \nu_{\tilde{d_i}}(x)}{\rho_{\tilde{d_i}}^2(x)}\,dx\\
&=\int_{\tau_{\tilde{\sigma}}(P_i)} f(x)\, \dfrac{ x\cdot \nu_{d_i}(x)}{\rho_{d_i}^2(x)}\,dx\\
&\leq \int_{\tau_{\sigma}(P_i)} f(x)\, \dfrac{ x\cdot \nu_{d_i}(x)}{\rho_{d_i}^2(x)}\,dx=\int_{\tau_{\sigma}(P_i)} f(x)\, \dfrac{ x\cdot\nu(x)}{\rho^2(x)}\,dx=
 \mu(P_i).
\end{align*}

\end{proof}

As in \cite{kochengin-oliker:nearfieldreflector} we obtain the following corollary.
\begin{cor}\label{corcomp}
Let $w_1=(d_1^1,d_2^1,\cdots ,d_N^1)$ and
$w_2=(d_1^2,d_2^2,\cdots,d_N^2)$. 
Define
$w=(d_1,d_2,\cdots ,d_N)$ where $d_i=\min(d_i^1,d_i^2)$, we write $w=\min(w_1,w_2)$.
Let $\mu_1,\mu_2,\mu$ be their corresponding reflector measures. Then
$$\mu(P_i)\leq \max(\mu_1(P_i),\mu_2(P_i)) \ for\ all\ 1 \leq i \leq N.$$

\end{cor}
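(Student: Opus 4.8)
The plan is to reduce the corollary to a pointwise comparison between the reflectors $\sigma=\{\rho_w(x)x\}$ and each of $\sigma_1=\{\rho_{w_1}(x)x\}$, $\sigma_2=\{\rho_{w_2}(x)x\}$, and then invoke Lemma~\ref{compare} repeatedly. The key observation is that since $d_i=\min(d_i^1,d_i^2)$ for every $i$, the vector $w$ is obtained from $w_1$ by lowering each coordinate (those where $d_i^2<d_i^1$), and similarly $w$ is obtained from $w_2$ by lowering the complementary coordinates. Thus $w_1\geq_{l} \cdots \geq_{l} w$ through a chain of single-coordinate decreases, and likewise for $w_2$. Lemma~\ref{compare} applies at each link of such a chain, so I expect to be able to conclude a comparison of the measures coordinate by coordinate.

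First I would fix an index $i$ with $1\leq i\leq N$. I would split the coordinates $\{1,\dots,N\}\setminus\{i\}$ according to whether $d_j=d_j^1$ or $d_j=d_j^2$; in either case $d_j\leq d_j^1$ and $d_j\leq d_j^2$. The idea is that to reach $w$ from $w_1$ one only needs to decrease coordinates, and I want to decrease all coordinates \emph{except} $i$ first, keeping the $i$-th coordinate fixed, so that Lemma~\ref{compare} (which compares the measure only at indices different from the one being changed) can be applied at the fixed target $P_i$ at every step. Concretely, let $w_1'$ be the vector agreeing with $w$ in all coordinates except the $i$-th, where it keeps the value $d_i^1$. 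Passing from $w_1$ to $w_1'$ is a finite sequence of single-coordinate lowerings at indices $\neq i$, so iterating Lemma~\ref{compare} gives $\mu(P_i)\leq \mu_{w_1'}(P_i)$ where along the way the $i$-th coordinate never changes. A symmetric argument with $w_2$ and $w_2'$ (agreeing with $w$ off the $i$-th coordinate, equal to $d_i^2$ there) gives $\mu(P_i)\leq\mu_{w_2'}(P_i)$.

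The remaining point is that $d_i=\min(d_i^1,d_i^2)$ equals either $d_i^1$ or $d_i^2$. If $d_i=d_i^1$, then $w=w_1'$, since the two vectors agree off $i$ by construction and at $i$ both equal $d_i^1=d_i$; hence $\mu(P_i)=\mu_{w_1'}(P_i)$. But $w_1'$ and $w_1$ differ only at coordinates $\neq i$, and I claim $\mu_{w_1'}(P_i)\leq\mu_1(P_i)$, again by iterating Lemma~\ref{compare} (raising those coordinates back up to $w_1$, which leaves $P_i$'s measure nondecreasing). Symmetrically, if $d_i=d_i^2$ then $\mu(P_i)\leq\mu_2(P_i)$. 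In either case $\mu(P_i)\leq\max(\mu_1(P_i),\mu_2(P_i))$, as required.

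The main obstacle I anticipate is purely bookkeeping: Lemma~\ref{compare} only compares measures at indices \emph{different} from the single coordinate being modified, so every intermediate step must hold the index $i$ fixed while adjusting the others one at a time. I must be careful that the telescoping chain from $w_1$ (or $w_2$) to $w$ never touches the $i$-th coordinate except at the very last identification $d_i=\min(d_i^1,d_i^2)$, and that each single-coordinate move is monotone in the correct direction so that the hypothesis $w\geq_l\tilde w$ of Lemma~\ref{compare} is genuinely satisfied at each link. Once the ordering of moves is organized this way, no new estimate is needed beyond a finite iteration of the already-established lemma.
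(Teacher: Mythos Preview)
Your approach is correct and essentially identical to the paper's: fix $i$, assume without loss of generality $d_i=d_i^1$, and build a telescoping chain from $w_1$ down to $w$ by lowering one coordinate $j\neq i$ at a time, applying Lemma~\ref{compare} at each step to conclude $\mu(P_i)\leq\mu_1(P_i)$. One wording slip: in your second paragraph the chain $w_1\to w_1'$ yields $\mu_{w_1'}(P_i)\leq\mu_1(P_i)$, not ``$\mu(P_i)\leq\mu_{w_1'}(P_i)$'' as you wrote; but your third paragraph states and uses the correct inequality (together with $w=w_1'$ when $d_i=d_i^1$), so the argument is complete.
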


\begin{proof}
Fix $i$ with $1\leq i \leq N$.
Without loss of generality, we may assume that $d_i=d_i^1$, i.e., $d_i^1\leq d_i^2$.
Consider $u_0,u_1,u_2,\cdots,u_N$, and their corresponding reflector measures $\nu_0,\nu_1,\nu_2,\cdots,\nu_N$ as follows:
\begin{align*}
u_0&=(d_1^1,d_2^1,\cdots ,d_N^1)=w_1\\
u_1&=(d_1,d_2^1,d_3^1,\cdots ,d_N^1)\\
u_2&=(d_1,d_2,d_3^1\cdots ,d_N^1)\\
\cdots \\
u_j&=(d_1,\cdots,d_j,d_{j+1}^1,\cdots,d_{N}^1)\qquad \text{for $1\leq j \leq N-1$}\\
\cdots \\
u_N&=(d_1,d_2,d_3,\cdots ,d_N)=w.
\end{align*}
With the notation of Lemma \ref{compare}, we have $u_0\geq _1 u_1\geq _2 \cdots \geq _N u_N$. Since $d_i=d_i^1$, we have $u_{i-1}=u_i$ and hence
$$\nu_{i-1}=\nu_i.$$
If $1\leq j \leq i-1$, then by Lemma \ref{compare}
$$\nu_j(P_i) \leq \nu_{j-1}(P_i),$$
and likewise for $i+1\leq j \leq N$
$$\nu_j(P_i) \leq \nu_{j-1}(P_i).$$
Since $u_N=w$, we get $\nu_N(P_i)=\mu(P_i)$.
Therefore
$$\mu(P_i)=\nu_N(P_i)\leq \cdots \leq \nu_i(P_i)=\nu_{i-1}(P_i)\leq \cdots \leq \nu_0(P_i) = \mu_1(P_i) \leq  \max(\mu_1(P_i),\mu_2(P_i)).$$

If $d_i=d_i^2$, i.e., $d_i^2\leq d_i^1$ then we define $u_0=(d_1^2,d_2^2,\cdots ,d_N^2)=w_2$ and
$u_j=(d_1,\cdots,d_j,d_{j+1}^2,\cdots,d_{N}^2)$ and proceed in a similar way.

Since $i$ was arbitrarily chosen, we obtain the corollary.

\end{proof}

We now prove existence of solutions in the discrete case.

\begin{theorem}\label{discretethmgeq}
Let $\Omega\subseteq S^2$ with $|\partial \Omega|=0$,
$f\in L^1(\bar{\Omega})$ such that $f>0$ a.e,
$g_1,g_2,\cdots ,g_N$ positive numbers with $N > 1$.
Let $D=\{P_1,P_2,....,P_N\}$ such that $O\notin D$, 
and let
 $M=\max_{1\leq i\leq N} OP_i$.
Define the measure $\eta$ on D by $\eta=\sum_{i=1}^{N} g_i \delta_{P_i}$.
Fix $\delta > 0$, 
let $k \geq \dfrac{1+c_\delta}{1-c_\delta}$, where $c_\delta$ is from \eqref{epsbdd}, and suppose that
\begin{equation}\label{probcond}
\int_{\bar \Omega}f(x)\,dx \geq \dfrac{1}{C(\delta,k \delta,M)} \eta(D),
\end{equation}
where $C(\delta,k\delta,M)=\dfrac{(1-c_{\delta})^{3}}{(1+c_{\delta})(k\delta M)^{2}}$.

Then there exists a reflector $\bar{w}=(\bar{d_{1}},\cdots ,\bar{d_{N}})$ in $\mathcal A(\delta)$, i.e., with $\bar d_{i}\geq \delta M$ for $1 \leq i \leq N$, satisfying:
\begin{enumerate}
\item $\bar{\Omega}=\bigcup_{i=1}^{N} \tau_{\bar{\sigma}}(P_{i})$.
\item $ \bar{\mu}(P_i)= g_i$ for $ 2 \leq i \leq N$, where $\bar{\mu}$ is the reflector measure corresponding to $\bar{w}$; and
\item $\bar{\mu}(P_1) > g_1$.
\end{enumerate}

\end{theorem}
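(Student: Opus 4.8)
The plan is to realize the reflector as a finite minimum of ellipsoids, $\rho_w(x)=\min_{1\le i\le N}\rho_{d_i}(x)$, and to run a Perron-type argument on the focal parameters $w=(d_1,\dots,d_N)$, exploiting the monotonicity of the reflector measure in $w$ (Lemma \ref{compare}), its min-stability (Corollary \ref{corcomp}), and its continuity under pointwise convergence (Corollary \ref{continui}). Note first that condition (1) is automatic for this class: for every direction $x$ the minimum defining $\rho_w(x)$ is attained by some index $i$, so the ellipsoid $E_{d_i}(P_i)$ supports $\sigma$ at $\rho_w(x)x$, whence $x\in\tau_{\bar\sigma}(P_i)$ and $\bar\Omega=\bigcup_i\tau_{\bar\sigma}(P_i)$. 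Thus the real content is (2) and (3), together with the requirement $\bar d_i\ge\delta M$, i.e.\ $\bar w\in\mathcal A(\delta)$.

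First I would fix the overshooting parameter $d_1$ at the smallest admissible value $\delta M$ and consider the family
\[
\mathcal W=\{\,w=(\delta M,d_2,\dots,d_N):\ d_i\ge\delta M,\ \mu_w(P_i)\le g_i\ \text{for } 2\le i\le N\,\}.
\]
This family is nonempty (take all $d_i$ with $i\ge2$ large, so that $\rho_w=\rho_{d_1}$ over most of $\bar\Omega$ and $\mu_w(P_i)$ is close to $0$) and, by Corollary \ref{corcomp}, closed under componentwise minimum in the coordinates $d_2,\dots,d_N$. I would then let $\bar w$ be the componentwise infimum of $\mathcal W$: using min-stability to produce a minimizing sequence whose coordinates simultaneously approach the infima, the uniform bound $\rho_w\le\rho_{d_1}\le\delta M/(1-c_\delta)$ lets me invoke Corollary \ref{continui} to pass to the limit and conclude $\bar w\in\mathcal W$, so $\mu_{\bar w}(P_i)\le g_i$ for $i\ge2$.

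Next I would establish the equalities in (2). Fix $i_0\ge2$ and suppose $\mu_{\bar w}(P_{i_0})<g_{i_0}$. If $\bar d_{i_0}>\delta M$, then decreasing $d_{i_0}$ slightly keeps $\mu(P_{i_0})\le g_{i_0}$ (by continuity, since the inequality was strict) while, by Lemma \ref{compare}, decreasing $\mu(P_j)$ for $j\ne i_0$ (so those stay $\le g_j$); the perturbed vector lies in $\mathcal W$ with a strictly smaller $i_0$-coordinate, contradicting the infimum. Hence at $\bar w$ we have, for each $i\ge2$, either $\mu_{\bar w}(P_i)=g_i$ or the lower constraint $\bar d_i=\delta M$ is active. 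Once all equalities $\mu_{\bar w}(P_i)=g_i$ ($i\ge2$) hold, part (3) is immediate: by Lemma \ref{blwbdd} applied with $\delta'=k\delta$ (legitimate since $d_1=\delta M\le k\delta M$) together with \eqref{probcond}, the total measure satisfies $\bar\mu(D)>C(\delta,k\delta,M)\int_{\bar\Omega}f\ge\eta(D)=\sum_i g_i$, whence $\bar\mu(P_1)=\bar\mu(D)-\sum_{i\ge2}g_i>g_1$.

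The crux — and the step I expect to be the main obstacle — is to rule out the degenerate alternative, namely a point $P_{i_0}$ with $\bar d_{i_0}=\delta M$ active while $\mu_{\bar w}(P_{i_0})<g_{i_0}$; this is exactly the situation where the solution would fail to deliver the prescribed irradiance at $P_{i_0}$ while pinned at the boundary of $\mathcal A(\delta)$. The total-measure bound does not by itself exclude it, since it only controls $\sum_i\mu(P_i)$, not the individual masses. I expect this to be where the quantitative hypothesis \eqref{probcond}, with the Harnack-type choice $k\ge\frac{1+c_\delta}{1-c_\delta}$ from Proposition \ref{Harnack}, is genuinely used: one should release the active boundary by allowing $d_1$ to increase within the window $[\delta M,\,k\delta M]$, which by Lemma \ref{compare} transfers measure from the over-served $P_1$ to the deficient points, and the constant $C(\delta,k\delta,M)$ is calibrated precisely so that this transfer can be completed while still keeping $d_1\le k\delta M$ and $\mu(P_1)>g_1$. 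Carrying out this adjustment so as to obtain simultaneously all equalities for $i\ge2$, the strict overshoot at $P_1$, and membership $\bar w\in\mathcal A(\delta)$, is the delicate part of the argument.
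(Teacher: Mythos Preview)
Your overall architecture is correct: define an admissible set by the inequalities $\mu_w(P_i)\le g_i$ for $i\ge2$, take the componentwise infimum, use Corollary~\ref{corcomp} and Corollary~\ref{continui} to show the infimum is attained, and then argue by perturbation that the inequalities are equalities. You have also correctly identified the sole obstruction, namely the possibility that some $\bar d_{i_0}=\delta M$ is pinned at the lower boundary while $\mu_{\bar w}(P_{i_0})<g_{i_0}$.

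The gap is in how you propose to resolve that obstruction. You fix $d_1=\delta M$ and then suggest ``releasing'' it by increasing $d_1$ within $[\delta M,k\delta M]$ to transfer mass to the deficient points. This is not a complete argument: increasing $d_1$ does increase each $\mu(P_i)$ for $i\ne1$ by Lemma~\ref{compare}, but it may push some previously saturated $\mu(P_j)$ above $g_j$, forcing you to re-minimize in those coordinates; it is not clear such an iterative adjustment terminates or stays in $\mathcal A(\delta)$.

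The paper's insight is to make the opposite choice at the outset: fix $d_1=k\delta M$, the \emph{largest} value in the window, and run the entire Perron argument with that fixed first coordinate. The point of the Harnack constant $k\ge\frac{1+c_\delta}{1-c_\delta}$ is then purely geometric: if any $\bar d_{i_0}=\delta M$ for $i_0\ge2$, Proposition~\ref{propineq} gives
\[
\rho_{\bar d_{i_0}}(x)\le \frac{\delta M}{1-c_\delta}\le \frac{k\delta M}{1+c_\delta}\le \rho_{\bar d_1}(x)\qquad\text{for all }x,
\]
so $\tau_{\bar\sigma}(P_1)\subseteq\tau_{\bar\sigma}(P_{i_0})$, hence $|\tau_{\bar\sigma}(P_1)|=0$ by Proposition~\ref{propmeasu} and $\bar\mu(P_1)=0$. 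But one checks (using Lemma~\ref{blwbdd} with $\delta'=k\delta$ and condition~\eqref{probcond}) that $\mu_w(P_1)>g_1>0$ for \emph{every} $w$ in the admissible set, a contradiction. Thus the boundary constraint is never active, the perturbation goes through, and no dynamic adjustment of $d_1$ is needed. Note also the logical order: the strict overshoot $\mu_w(P_1)>g_1$ is established for all admissible $w$ \emph{before} the equalities~(2), precisely so it can be invoked in ruling out the degenerate case; in your outline you derive~(3) from~(2), which leaves you without this tool at the moment you need it.
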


\begin{proof}
Consider the set:
\begin{align*}
W&=\{w=(d_1,\cdots, d_N):d_1=k\delta M, d_i \geq \delta M,  \\
& \qquad \mu_w(P_i)=\int_{\tau_{\sigma_w (P_i)}}f(x)\,\dfrac{ x\cdot \nu_{w}(x) }{\rho_{w} ^2 (x)}\,dx\leq g_i,i=2,...,N \}.
\end{align*}

We first show that $W\neq \emptyset$. In fact, take $w_{0}=(d_{1}^{0},\cdots ,d_{N}^{0})$ with $d_{1}^{0}=k\delta M$, $d_{i}^{0}=t \delta M$ for $2\leq i \leq N$ with $t \geq1$ to be chosen. By Proposition \ref{propineq}, we have $\rho_{d_{1}^{0}}(x) \leq \dfrac{k\delta M}{1-c_{\delta}}$;
and $ \rho_{d_{i}^{0}}(x) \geq \dfrac{t\delta M}{1+c_{\delta}}$ for  $i=2,\cdots ,N$.
If we pick $t$ sufficiently large, then
$\rho_{d_{1}^{0}}(x)\leq \rho_{d_{i}^{0}}(x)$ for $i=2,\cdots ,N$ and all $x \in \bar{\Omega}$. Hence $\rho_{w_0}(x)=\rho_{d_{1}^{0}} (x)$ for  every $x \in \bar{\Omega}$.
Therefore $\mu_{0}(P_i)=0 < g_i$ for $ i=2,\cdots,N$ and so $(k\delta M,t\delta M,...,t\delta M)\in W$ for $t> k(1+c_{\delta})/(1-c_{\delta}).$

$W$ is closed. In fact, let $w_n=(d_1^n,\cdots,d_N^n)\in W$ converging to $w=(d_1,\cdots,d_N)$, and let $\mu_n$ and $\mu$ be their corresponding reflector
measures. We have $d_1=k\delta M$ and $d_i \geq \delta M$ for $ i=2,\cdots,N$.
We have that $w_n\in \mathcal A(\delta)$, and by Proposition \ref{propineq}: $\rho_{w_n}(x) \leq \dfrac{k\delta M}{1-c_\delta}$.
Then by Corollary \ref{continui}
$$\mu(P_i) = \lim_{n \to \infty}  \mu_n(P_i) \leq g_i \qquad \text{for all $i=2,\cdots,N$}.$$
Therefore $w \in W$.

We next prove that if $w \in W$, then $\mu_w(P_1)>g_1$. In fact, we have
\begin{align*}
\mu_w(P_1)-g_1 &=\left[\mu_w(D) -\left(\mu_w(P_2)+\cdots +\mu_w(P_N)\right)\right]- g_1 \\
&=\mu_w(D)-\left(g_1+\mu_w(P_2)+\cdots +\mu_w(P_N)\right)\\
&\geq \mu_w(D) -(g_1+g_2+\cdots +g_N)\\
&> C(\delta,k \delta,M)\int_{\bar{\Omega}} f(x)\,dx -\eta(D) \qquad \text{by Lemma \ref{blwbdd}}\\
&\geq C(\delta,k \delta, M)\left(\dfrac{1}{C(\delta,k \delta,M)}\eta(D)\right) -\eta(D)=0 \qquad \text{by condition (\ref{probcond})}.
\end{align*}

Let $\bar{d_1}=k \delta M$, and $\bar{d_i}=\inf_{w \in W} d_i$ for $2\leq i\leq N$. Take the reflector $\bar{\sigma}=\{\rho_{\bar{w}} (x)x\}$ and it's corresponding measure $\bar{\mu}$, with
$\bar w=(\bar{d_1},\cdots,\bar{d_N})$. We have that  $\bar{d_i}\geq \delta M$ for $2 \leq i \leq N$. Since $W$ is closed and the $d_i 's$ are bounded below,
the infimum is attained at some reflector $\bar w_i=(k\delta M, \bar d_2^i,\cdots,\bar d_{i-1}^i,\bar{d_i}, \bar d_{i+1}^i,\cdots,\bar d_N^i)\in W$ for $2\leq i \leq N$. Let $\bar \mu_i$
be the reflector measure corresponding to $\bar w_i$. Since $\bar w=\min_{2\leq i\leq N} \bar w_i$,
it follows from Corollary \ref{corcomp} that
$$\bar{\mu}(P_i) \leq \max\left(\bar \mu_2 (P_i),\bar \mu_3 (P_i),\cdots,\bar \mu_N (P_i)\right) \leq g_i \qquad \text{for $2 \leq i \leq N$},$$
and so $\bar{w} \in W$.

It remains to prove that in fact we have   $\bar{\mu}(P_i) = g_i$ for all $i \geq 2$.
Without loss of generality, suppose that the inequality is strict for $i=2$, that is, $\bar{\mu}(P_2) <
g_2$. Take $0 <\lambda<1$, $w_{\lambda}=(k\delta M, \lambda \bar{d_2},\bar{d_3},...\bar{d_N})$, and let $\mu_{\lambda}$ be the corresponding reflector measure.
We claim that $\bar{d_2} > \delta M$. Suppose by contradiction that $\bar{d_2} = \delta M$. Then by Proposition \ref{propineq}, $\rho_{\bar{d_2}} \leq \dfrac{\delta M}{1-c_{\delta}}$ and
$\rho_{\bar{d_1}}\geq \dfrac{ k\delta M}{1+c_{\delta}}$, but since $k \geq \dfrac{1+c_{\delta}}{1-c_{\delta}}$, we have $\rho_{\bar {d_{1}}}  \geq \rho_{\bar{d_{2}}}$.
Therefore
$\tau_{\bar{\sigma}}(P_1) \subseteq \tau_{\bar{\sigma}}(P_2)$, and by Proposition \ref{propmeasu}, we have  $|\tau_{\bar{\sigma}}(P_1)| = |\tau_{\bar{\sigma}}(P_1) \cap
\tau_{\bar{\sigma}}(P_2)|=0$, hence $\bar{\mu} (P_{1})=0$, a contradiction.
This proves the claim, and therefore $\lambda \bar{d_{2}}>\delta M$ for all $\lambda$ sufficiently close to one.
Moreover, by Lemma \ref{compare}, $\mu_{\lambda}(P_{i}) \leq \bar{\mu}(P_{i})\leq g_{i}$  for $i  \geq 3$, and by Corollary \ref{continui}
$$\lim_{\lambda \to 1} \mu_{\lambda}(P_2)= \bar{\mu}(P_2).$$
Then there exist $\lambda_0$ close to one such that $\mu_{\lambda}(P_2) < g_2$ and $\lambda \bar{d_{2}}\geq \delta M$,
for $ \lambda_0 \leq \lambda < 1$. Hence $w_{\lambda} \in W $
contradicting the definition of $\bar{d_2}$.
We conclude that $\bar{w}$ satisfies conditions (1)--(3).
\end{proof}

\begin{remark}\label{rmk:discretesolutionwithoutobstruction}\rm
The reflector constructed in Theorem \ref{discretethmgeq} might obstruct the reflected rays before reaching the target and also can be in the way to the target.
To avoid this, and as it was explained in Subsection \ref{subset:physicalvisibilityissues}, 
if is enough to choose $\delta>\delta_D$, with $\delta_D$ defined in \eqref{eq:assumptionondeltatoavoidobstruction}, and that $\bar \Omega\cap D^*=\emptyset$ where $D^*$ is the projection of $D$ over $S^2$.
\end{remark}

\subsection{Comparison with the solution in \cite[Theorem 7]{kochengin-oliker:nearfieldreflector}}\label{subset:comparisonwithOlikersolution}\rm
The purpose of this subsection is to compare the reflector constructed in \cite[Theorem 7]{kochengin-oliker:nearfieldreflector} where the inverse square law is not taken into account, with the reflector constructed in Theorem \ref{discretethmgeq}.

Let us assume that 
\begin{equation}\label{Olikercondition}
\int_{\bar \Omega} f(x)\,dx=
\dfrac{1}{C(\delta,k\delta,M)}\,\eta(D),
\end{equation}
with $k \geq \dfrac{1+c_\delta}{1-c_\delta}$.
The reflector in \cite[Theorem 7]{kochengin-oliker:nearfieldreflector} is constructed when $\delta=4, k=4$, and the energy measure at the target $D$ equals $\eta(D)/C(4,16,M)$.
With the same method there, it can be proved that given $\delta>0$ and $k \geq \dfrac{1+c_\delta}{1-c_\delta}$, and the densities satisfying \eqref{Olikercondition},
 there exists a solution $\sigma^*=(k\delta M, d_2^*,\cdots,d_N^*) \in \mathcal A(\delta)$ such that
\begin{enumerate}
\item $\bigcup_{i=1}^N \tau_{\sigma^*}(P_i)=\bar{\Omega}$,
\item $\int_{\tau_{\sigma^*} (P_i)}f(x)\,dx=\dfrac{1}{C(\delta,k\delta,M)}\,g_i \quad \forall 1\leq i \leq N$.
\end{enumerate}

Suppose $k=\dfrac{1+c_\delta}{1-c_\delta}$.
We compare $\sigma^*$ with the reflector $\sigma$ constructed in Theorem \ref{discretethmgeq}.
If $\tau_{\sigma^*}(E)$ has positive  measure, then from the argument in
Lemma \ref{blwbdd} we have that
\begin{align*}
\mu^*(E):=\int_{\tau_{\sigma^*}(E)}f(x)\,\dfrac{x\cdot \nu^*(x)}{\rho^*(x)^2}\,dx
&>
C(\delta,k\delta,M)
\int_{\tau_{\sigma^*}(E)}f(x)\,dx
=
\eta(E).
\end{align*}
If $P_1\notin E$, then from
Theorem \ref{discretethmgeq}
\[
\eta(E)=\int_{\tau_{\sigma}(E)}f(x)\,\dfrac{x\cdot \nu(x)}{\rho(x)^2}\,dx.
\]
On the other hand,
if $\rho^*$ is the parametrization of $\sigma^*$, then
\[
\rho^*(x)\geq \dfrac{\delta M}{1+c_\delta},
\]
and obviously
$
x\cdot \nu^*(x)\leq 1$.
Then for each Borel $E\subseteq D$ we have
\begin{align*}
\int_{\tau_{\sigma^*}(E)}f(x)\,\dfrac{x\cdot \nu^*(x)}{\rho^*(x)^2}\,dx
&\leq
\dfrac{(1+c_\delta)^2}{(\delta M)^2}
\int_{\tau_{\sigma^*}(E)}f(x)\,dx\\
&=
\dfrac{(1+c_\delta)^2}{(\delta M)^2}
\dfrac{1}{C(\delta,k\delta,M)}\eta(E)\\
&=
\dfrac{(1+c_\delta)^2}{(\delta M)^2}
\dfrac{(1+c_\delta)(k\delta M)^2}{(1-c_\delta)^3}\eta(E)\\
&=\left(\dfrac{1+c_\delta}{1-c_\delta}\right)^5\eta(E)\\
&\leq
\left(\dfrac{1+c_\delta}{1-c_\delta}\right)^5
\int_{\tau_{\sigma}(E)}f(x)\dfrac{x\cdot \nu(x)}{\rho(x)^2}\,dx.
\end{align*}
Therefore we obtain the estimates
\[
\left(\dfrac{1-c_\delta}{1+c_\delta}\right)^5\int_{\tau_{\sigma^*}(E)}f(x)\,\dfrac{x\cdot \nu^*(x)}{\rho^*(x)^2}\,dx
\leq
\int_{\tau_{\sigma}(E)}f(x)\,\dfrac{x\cdot \nu(x)}{\rho(x)^2}\,dx
<
\int_{\tau_{\sigma^*}(E)}f(x)\,\dfrac{x\cdot \nu^*(x)}{\rho^*(x)^2}\,dx
\]
for each Borel set $E$ with $P_1\notin E$ (the left inequality is true for every $E$ Borel),
As a consequence 
$\sigma^*$ is not a solution in the sense of Theorem \ref{discretethmgeq}, but
for each Borel set $E$ not containing $P_1$ we have
$$0<\mu^*(E)-\eta(E)\leq \left(\left(\dfrac{1+c_\delta}{1-c_\delta}\right)^5-1\right) \eta(D).$$
The error then goes to zero uniformly in $E$ as $\delta\to \infty$.

\subsection{Discussion about overshooting in the discrete case}\label{sub:discreteovershooting}
Theorem \ref{discretethmgeq} shows the existence of a solution that overshoots energy at $P_1$.
We are next interested in finding a solution that minimizes the overshooting at $P_1$.
\begin{defi}\label{classes}
Let $\delta>0$ and $k\geq \dfrac{1+c_\delta}{1-c_\delta}(\geq 1)$. 
With the notation of Theorem \ref{discretethmgeq}
we define the following
\begin{enumerate}
\item $W=\{w=(d_1,\cdots, d_N):d_1=k\delta M, d_i \geq \delta M, \mu_w(P_i) \leq g_i \ \ for \ \ 2\leq i \leq N\}$.
\item The reflector $\bar{\sigma}=\{\rho_{\bar{w}} (x)x\}$, and its corresponding reflector measure $\bar{\mu}$, where $\bar w=(\bar{d_1},\cdots,\bar{d_N})$ with $\bar{d_1}=k \delta M$, and $\bar{d_i}=\inf_{w \in W} d_i$ for $2\leq i \leq N$.
\item $\mathcal{C}= \{w=(d_1,\cdots, d_N): d_1=k\delta M, d_i \geq \delta M, \mu_w(P_i)=g_i \ \ for\ \ 2\leq i \leq N\}$.
\item $\mathcal {D}=\{w=(d_1,\cdots, d_N): d_i \geq \delta M \ \ for\ \  all\ \  i, \mu_w(P_1) \geq g_1,  \mu_w(P_i)=g_i \ \ for \ \ 2\leq i \leq N\}. $
\end{enumerate}
\end{defi}

\begin{prop}\label{classproperties}
Under the assumptions of Theorem \ref{discretethmgeq} we have that
\begin{enumerate}
\item $\bar{w} \in W \cap \mathcal{C} \cap \mathcal{D}$, and hence the three sets are non empty.
\item $\mathcal{C}= W \cap \mathcal{D}$.
\item  $W$ is closed and unbounded, $\mathcal{C}$ and $\mathcal{D}$ are compact.
\end{enumerate}
\end{prop}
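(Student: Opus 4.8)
The plan is to handle the three items in turn, relying on Theorem \ref{discretethmgeq} for (1), on the fact extracted from its proof that $w\in W$ forces $\mu_w(P_1)>g_1$ for (2), and on the continuity of the reflector measures (Corollary \ref{continui}) together with Proposition \ref{propineq} for the topological statements in (3). For (1) I would simply check that the reflector $\bar w$ delivered by Theorem \ref{discretethmgeq} meets every defining condition: by construction $\bar d_1=k\delta M$ and $\bar d_i\geq\delta M$, while the theorem yields $\bar\mu(P_i)=g_i$ for $2\leq i\leq N$ and $\bar\mu(P_1)>g_1$. The equalities place $\bar w$ in $\mathcal C$; since an equality is in particular the inequality $\bar\mu(P_i)\leq g_i$, they place $\bar w$ in $W$; and since $k\geq 1$ gives $\bar d_1\geq\delta M$ and $\bar\mu(P_1)>g_1\geq g_1$, they place $\bar w$ in $\mathcal D$. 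Thus all three sets are non empty.

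For (2) the crucial input is that every $w\in W$ satisfies $\mu_w(P_1)>g_1$, which is exactly the computation in the proof of Theorem \ref{discretethmgeq} using Lemma \ref{blwbdd} and condition \eqref{probcond}. Given this, if $w\in\mathcal C$ then the equalities $\mu_w(P_i)=g_i$ are in particular $\leq g_i$ so $w\in W$, and then $\mu_w(P_1)>g_1\geq g_1$ together with $\mu_w(P_i)=g_i$ and $d_i\geq\delta M$ give $w\in\mathcal D$; hence $\mathcal C\subseteq W\cap\mathcal D$. Conversely, if $w\in W\cap\mathcal D$, the conditions $d_1=k\delta M$ and $d_i\geq\delta M$ come from $W$ while the equalities $\mu_w(P_i)=g_i$ come from $\mathcal D$, so $w\in\mathcal C$. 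This proves $\mathcal C=W\cap\mathcal D$.

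For (3), closedness of $W$ is obtained as in Theorem \ref{discretethmgeq}: for $w_n\in W$ with $w_n\to w$, the fixed value $d_1=k\delta M$ yields the uniform bound $\rho_{w_n}\leq \tfrac{k\delta M}{1-c_\delta}$ from Proposition \ref{propineq}, so Corollary \ref{continui} gives $\mu_w(P_i)=\lim_n\mu_{w_n}(P_i)\leq g_i$, and $d_1=k\delta M$, $d_i\geq\delta M$ survive in the limit. Unboundedness of $W$ follows from the family $(k\delta M,t\delta M,\dots,t\delta M)$, which lies in $W$ for all large $t$, as was noted when verifying $W\neq\emptyset$. For $\mathcal C$, closedness is identical, now with the equalities $\mu_w(P_i)=g_i$ passing to the limit; for boundedness, note that if some $d_i$ with $i\geq2$ exceeded $\tfrac{1+c_\delta}{1-c_\delta}\,k\delta M$, then $\rho_{d_i}(x)\geq\tfrac{d_i}{1+c_\delta}>\tfrac{k\delta M}{1-c_\delta}\geq\rho_{d_1}(x)$ for every $x$, so $P_i$ is never reached and $\mu_w(P_i)=0<g_i$, a contradiction. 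Thus $\mathcal C$ is compact.

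The delicate case, and the main obstacle, is the compactness of $\mathcal D$, where $d_1$ is not fixed. To bound $\mathcal D$ I would proceed in three steps. First, for $w\in\mathcal D$ one has $\mu_w(D)=\mu_w(P_1)+\sum_{i\geq2}g_i\geq\eta(D)$; combining this with $x\cdot\nu_w\leq1$ and the lower bound $\rho_w\geq\tfrac{d_{\min}}{1+c_\delta}$ from Proposition \ref{propineq}, where $d_{\min}=\min_i d_i$, gives $\eta(D)\leq\tfrac{(1+c_\delta)^2}{d_{\min}^2}\int_{\bar\Omega}f$, which bounds $d_{\min}$ from above. Second, evaluating at the minimizing index yields the uniform bound $\rho_w(x)\leq\tfrac{d_{\min}}{1-c_\delta}=:b$. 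Third, since each $\mu_w(P_i)>0$ forces $\tau_\sigma(P_i)$ to be non empty, on that set $\rho_{d_i}=\rho_w\leq b$, and $\rho_{d_i}\geq\tfrac{d_i}{1+c_\delta}$ then gives $d_i\leq b(1+c_\delta)$ for every $i$; hence $\mathcal D$ is bounded. Closedness of $\mathcal D$ follows from Corollary \ref{continui}, the required uniform bound on $\rho_{w_n}$ being available along any convergent sequence since it remains in a bounded region. Therefore $\mathcal D$ is compact.
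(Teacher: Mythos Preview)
Your proof is correct. Parts (1) and (2), as well as the closedness and unboundedness of $W$, follow the paper exactly. The difference lies in how you establish compactness in (3).

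For the boundedness of $\mathcal D$, the paper gives a one-line coordinate-wise estimate: since on $\tau_\sigma(P_i)$ one has $\rho_w=\rho_{d_i}$ and $\nu_w=\nu_{d_i}$, and since $\rho_{d_i}^{-2}(x)=d_i^{-2}(1-\varepsilon_i x\cdot m_i)^2\leq 4/d_i^2$, one gets directly
\[
g_i\leq \mu_w(P_i)=\int_{\tau_\sigma(P_i)}f\,\frac{x\cdot\nu_{d_i}}{\rho_{d_i}^2}\,dx\leq \frac{4}{d_i^2}\|f\|_{L^1(\bar\Omega)},
\]
so $d_i\leq 2\sqrt{\|f\|_{L^1}/g_i}$ for every $i$. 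Your three-step route via $d_{\min}$, then the global bound $\rho_w\leq d_{\min}/(1-c_\delta)$, then $d_i\leq b(1+c_\delta)$, is valid but more circuitous; the paper's bound is sharper in that it is independent of $\delta$ and depends only on $g_i$ and $\|f\|_{L^1}$. For $\mathcal C$, the paper does not argue directly at all: once $\mathcal D$ is compact and $W$ closed, $\mathcal C=W\cap\mathcal D$ from (2) gives compactness for free. Your direct argument for $\mathcal C$ (bounding $d_i$ by forcing $\tau_\sigma(P_i)\neq\emptyset$) is correct but unnecessary once (2) is in hand.
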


\begin{proof}
(1) follows from Theorem \ref{discretethmgeq}.

(2) follows from the fact proved in Theorem \ref{discretethmgeq} that for each $w \in W$, $\mu_w(P_1)>g_1.$

The first part of (3) follows from the proof of Theorem \ref{discretethmgeq}.

To prove that  $\mathcal{D}$ is bounded, let  $w=(d_1,\cdots,d_N)\in \mathcal{D}$. For $1 \leq i \leq N$ we have
\begin{align*}
g_i &\leq \mu_w(P_i)
=\int_{\tau_{\sigma}(P_i)} f(x)\, \dfrac{  x\cdot \nu_w (x)}{\rho_w(x)^2}\,dx
=\int_{\tau_{\sigma}(P_i)} f(x) \dfrac{  x\cdot \nu_{d_i} (x)}{\rho_{d_i}(x)^2}\,dx\\
&\leq \int_{\bar{\Omega}} f(x) \dfrac{1}{\rho_{d_i}(x)^2}\,dx=\int_{\bar{\Omega}} f(x) \dfrac{(1-\varepsilon_i  x\cdot m)^2}{d_i^2}\,dx
\leq \dfrac{4\, \|f\|_{L^1(\bar{\Omega})}}{d_i^2}.
\end{align*}
Therefore
$$d_i \leq 2\sqrt{\dfrac{ ||f||_{L^1(\bar{\Omega})}}{g_i}}$$
so $\mathcal{D}$ is bounded.
To show that $\mathcal{D}$ is closed,
let $w_n=(d_1^n,\cdots,d_N^n)\in \mathcal{D}$ converging to $w=(d_1,\cdots,d_N)$, and let $\mu_n$ and $\mu$ be their corresponding
reflector measures. By Proposition \ref{propineq}, we have:$$\dfrac{\delta M}{1+c_\delta}\leq \rho_{w_n}(x) \leq \dfrac{d_1^n}{1-c_\delta}\leq  \dfrac{2\left( \sqrt{\dfrac{ ||f||_{L^1(\bar{\Omega})}}{g_1}}\right)}{1-c_\delta} .$$
And then by Corollary \ref{continui}, we have
$
\mu(P_i) = \lim_{n \to \infty}  \mu_n(P_i)= g_i$ for $i=2,\cdots,N$, and
$\mu(P_1) =\lim_{n \to \infty}  \mu_n(P_1) \geq g_1$.
Therefore, $w \in \mathcal{D}$ and so $\mathcal{D}$ is closed.

The compactness of $\mathcal{C}$ is hence concluded from (2).

\end{proof}

The following proposition shows that there exist solutions in the sets $\mathcal{C}$ and $\mathcal{D}$ that minimize the overshooting at $P_1$.
\begin{prop}\label{mini}
Under the assumptions of Theorem \ref{discretethmgeq} we have:
\begin{enumerate}
\item There exist $w_{\mathcal{C}} \in \mathcal{C}$ such that $\mu_{w_{\mathcal{C}}}(P_1) \leq \mu_w(P_1)$ for all $w \in \mathcal{C}$.
\item There exist $w_{\mathcal{D}} \in \mathcal{D}$ such that $\mu_{w_{\mathcal{D}}}(P_1) \leq \mu_w(P_1)$ for all $w \in \mathcal{D}$.
\end{enumerate}
\end{prop}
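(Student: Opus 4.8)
The plan is to realize each statement as the minimization of the continuous functional $w\mapsto \mu_w(P_1)$ over a nonempty compact set, and then invoke the Weierstrass extreme value theorem. By Proposition \ref{classproperties}, both $\mathcal{C}$ and $\mathcal{D}$ are nonempty and compact as subsets of $\R^N$, so the only thing that really needs to be checked is that $w\mapsto \mu_w(P_1)$ is continuous on each of them; once continuity is in hand, a continuous real-valued function on a nonempty compact set attains its minimum, which produces the desired $w_{\mathcal{C}}$ and $w_{\mathcal{D}}$.

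To prove continuity I would appeal to Corollary \ref{continui}, which asserts exactly that $\mu_{w_n}(P_1)\to \mu_w(P_1)$ provided the reflectors $\sigma_n=\{\rho_{w_n}(x)x\}$ and $\sigma=\{\rho_w(x)x\}$ satisfy the hypotheses of Proposition \ref{weaklem}. So let $w_n\to w$ with $w_n$ in $\mathcal{C}$ (respectively $\mathcal{D}$). First, pointwise convergence $\rho_{w_n}\to \rho_w$: for each fixed $i$ the eccentricity $\varepsilon_i^n$ depends continuously on $d_i^n$ through \eqref{epsequ}, hence $\rho_{d_i^n}(x)=\dfrac{d_i^n}{1-\varepsilon_i^n\,x\cdot m_i}\to \rho_{d_i}(x)$ for every $x$, and therefore $\rho_{w_n}(x)=\min_{1\le i\le N}\rho_{d_i^n}(x)\to \min_{1\le i\le N}\rho_{d_i}(x)=\rho_w(x)$, since the minimum of finitely many convergent sequences converges to the minimum of the limits. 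Second, the uniform upper bound: on $\mathcal{C}$ the first coordinate is frozen at $d_1=k\delta M$, so by Proposition \ref{propineq} one has $\rho_{w_n}(x)\le \rho_{d_1}(x)\le \dfrac{k\delta M}{1-c_\delta}=:b$; on $\mathcal{D}$ the a priori estimate $d_1^n\le 2\sqrt{\|f\|_{L^1(\bar{\Omega})}/g_1}$ obtained in the proof of Proposition \ref{classproperties} gives $\rho_{w_n}(x)\le \dfrac{d_1^n}{1-c_\delta}\le \dfrac{2\sqrt{\|f\|_{L^1(\bar{\Omega})}/g_1}}{1-c_\delta}=:b$. In both cases $w_n\in\mathcal{A}(\delta)$ by construction, so the hypotheses of Proposition \ref{weaklem} hold and Corollary \ref{continui} yields $\mu_{w_n}(P_1)\to \mu_w(P_1)$.

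Having established that $w\mapsto\mu_w(P_1)$ is continuous on the compact sets $\mathcal{C}$ and $\mathcal{D}$, I would conclude by the extreme value theorem that the infimum of $\mu_w(P_1)$ is attained at some $w_{\mathcal{C}}\in\mathcal{C}$ and some $w_{\mathcal{D}}\in\mathcal{D}$, proving (1) and (2). I do not expect any serious obstacle here: compactness is already supplied by Proposition \ref{classproperties}, and the continuity of the reflector measure has essentially been prepared by Proposition \ref{weaklem} and Corollary \ref{continui}. The only mildly delicate point is bookkeeping the uniform upper bound $b$ differently on $\mathcal{C}$, where $d_1$ is fixed, and on $\mathcal{D}$, where one must instead use the a priori bound on $d_1^n$, so that Corollary \ref{continui} applies verbatim in both cases.
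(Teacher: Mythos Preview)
Your proof is correct and follows essentially the same approach as the paper: both rely on the compactness of $\mathcal{C}$ and $\mathcal{D}$ from Proposition \ref{classproperties} and on Corollary \ref{continui} (together with the uniform bounds coming from Proposition \ref{propineq} and the a priori estimate on $d_1$) to pass to the limit in $\mu_w(P_1)$. The paper phrases this as a minimizing-sequence argument rather than explicitly invoking the extreme value theorem, but the two formulations are equivalent.
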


\begin{proof}
We will only prove part $(2)$ because part (1) is similar.
There exists a sequence $w_n \in \mathcal{D}$ such that:
$\lim_{n \to \infty} \mu_n(P_1)= \inf_{w \in \mathcal{D}} \mu_w(P_1)$.
By Proposition \ref{classproperties}, $\mathcal{D}$ is compact, then there exist a subsequence $w_{n_k}$ converging to $w_0\in \mathcal{D}$, and
$\dfrac{\delta M}{1+c_\delta}\leq \rho_{w_{n_k}} 
\leq \dfrac{2}{1-c_\delta}
\sqrt{\dfrac{||f||_{L^1(\bar{\Omega})}}{g_1}}$.
We  conclude from Corollary \ref{continui} that:
$$\mu_{w_0}(P_1) = \lim_{n \to \infty}\mu_{w_{n_k}}(P_1)=\inf_{w \in \mathcal{C}} \mu_w(P_1).$$

\end{proof}


In the following theorem we show that among all solutions in $\mathcal{C}$, the solution that overshoots the minimum amount at $P_1$  is $\bar{w}$. Moreover, if $\bar{\Omega}$ is connected then $\bar{w}$ is the unique such solution.

\begin{theorem}\label{uniqueness}
Let $w=(d_1,d_2,...,d_N)\in \mathcal{C}$ and $\mu$ its corresponding reflector measure. Then $$\bar{\mu}(D) \leq \mu(D),$$
where $\bar \mu$ is the reflector measure corresponding to $\bar w$.
Moreover, if $\bar{\Omega}$ is connected and $\bar{\mu}(D)=\mu(D)$ then:
$$\bar{d_i}=d_i \quad  for\ \ all \ \ 1 \leq i \leq N.$$
\end {theorem}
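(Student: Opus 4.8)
The plan is to reduce the inequality $\bar\mu(D)\le\mu(D)$ to a comparison of the overshoot at $P_1$, and then to establish the rigidity statement by forcing the two reflectors to coincide on all of $\bar\Omega$. Since $w,\bar w\in\mathcal C$ (Definition \ref{classes}), we have $\mu_w(P_i)=g_i=\bar\mu(P_i)$ for $2\le i\le N$, so $\mu(D)-\bar\mu(D)=\mu_w(P_1)-\bar\mu(P_1)$ and it suffices to prove $\bar\mu(P_1)\le\mu_w(P_1)$. Because $\bar d_i=\inf_{w'\in W}d_i'$ and $\mathcal C\subseteq W$, every $w\in\mathcal C$ satisfies $d_i\ge\bar d_i$ for all $i$, with $d_1=\bar d_1=k\delta M$; by Proposition \ref{propmonot}, $\rho_{\bar d_j}\le\rho_{d_j}$ pointwise for each $j$. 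I would then show $\tau_{\bar\sigma}(P_1)\subseteq\tau_\sigma(P_1)$: if $x\in\tau_{\bar\sigma}(P_1)$ then $\rho_{\bar d_1}(x)\le\rho_{\bar d_j}(x)$ for all $j$, hence $\rho_{d_1}(x)=\rho_{\bar d_1}(x)\le\rho_{\bar d_j}(x)\le\rho_{d_j}(x)$, so $x\in\tau_\sigma(P_1)$. On both regions the active ellipsoid is the same $E_{d_1}(P_1)=E_{\bar d_1}(P_1)$, so the integrand $G(x):=f(x)\,\dfrac{x\cdot\nu_{d_1}(x)}{\rho_{d_1}(x)^2}$ is one and the same nonnegative function. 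Therefore $\bar\mu(P_1)=\int_{\tau_{\bar\sigma}(P_1)}G\,dx\le\int_{\tau_\sigma(P_1)}G\,dx=\mu_w(P_1)$, which gives $\bar\mu(D)\le\mu(D)$.

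For the rigidity part, assume $\bar\mu(D)=\mu(D)$, equivalently $\bar\mu(P_1)=\mu_w(P_1)$, so that $\int_{\tau_\sigma(P_1)\setminus\tau_{\bar\sigma}(P_1)}G\,dx=0$. Since $f>0$ a.e. and, by Propositions \ref{propnorm} and \ref{propineq}, $x\cdot\nu_{d_1}(x)\ge\dfrac{1-c_\delta}{1+c_\delta}>0$ with $\rho_{d_1}$ bounded, we have $G>0$ a.e.; hence $|\tau_\sigma(P_1)\setminus\tau_{\bar\sigma}(P_1)|=0$ and $\tau_\sigma(P_1)=\tau_{\bar\sigma}(P_1)$ up to a null set. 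In particular this common set has positive measure, because $\mu_w(P_1)>g_1>0$ by Theorem \ref{discretethmgeq}.

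Next I would propagate the coincidence using connectedness. Put $A=\{x\in\bar\Omega:\rho_w(x)=\rho_{\bar w}(x)\}$ and $B=\{x:\rho_w(x)>\rho_{\bar w}(x)\}$, so $A$ is closed and $B$ relatively open. The key local fact is: if $x\in A$ and index $i$ is active for $w$ at $x$ (i.e. $\rho_{d_i}(x)=\rho_w(x)$), then $\rho_{\bar w}(x)\le\rho_{\bar d_i}(x)\le\rho_{d_i}(x)=\rho_w(x)=\rho_{\bar w}(x)$, forcing $\rho_{\bar d_i}(x)=\rho_{d_i}(x)$; strict monotonicity in $d$ (Proposition \ref{propmonot}) then yields $d_i=\bar d_i$, with $i$ also active for $\bar w$. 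Consequently, away from the measure-zero singular sets $S_\sigma,S_{\bar\sigma}$ of Lemma \ref{lem:aleksandrovforreflectors}, each point of $A$ has a neighborhood on which both reflectors are governed by one common ellipsoid, so $A$ is relatively open in $\bar\Omega\setminus(S_\sigma\cup S_{\bar\sigma})$. Since $A\supseteq\tau_{\bar\sigma}(P_1)=\tau_\sigma(P_1)$ is nonempty, I would use the connectedness of $\bar\Omega$ to spread the coincidence from the $P_1$-cell across adjacent cells, matching the active $d$ at each shared boundary by the local fact, and conclude $A=\bar\Omega$, i.e. $\rho_w\equiv\rho_{\bar w}$. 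Finally, for each $i$ the region $\tau_\sigma(P_i)$ is nonempty (as $\mu_w(P_i)=g_i>0$), and at any $x$ there $\rho_{d_i}(x)=\rho_w(x)=\rho_{\bar w}(x)\le\rho_{\bar d_i}(x)\le\rho_{d_i}(x)$, so $\rho_{d_i}(x)=\rho_{\bar d_i}(x)$ at that point; strict monotonicity (Proposition \ref{propmonot}) gives $d_i=\bar d_i$ for every $1\le i\le N$.

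The main obstacle I anticipate is the propagation across the singular crease sets $S_\sigma\cup S_{\bar\sigma}$: although these have measure zero, removing them can topologically disconnect $\bar\Omega$, so connectedness cannot be invoked naively, and coincidence need not cross a crease by continuity alone — an adjacent cell could a priori lie in $B$. To close this gap I expect to need the \emph{full} family of equalities $\mu_w(P_i)=\bar\mu(P_i)$ for all $i$ (not only $i=1$), together with a boundary/continuity argument at each shared crease showing that the active ellipsoids of $\sigma$ and $\bar\sigma$ there must coincide, so that $A$ is carried from one cell to the next and ultimately exhausts the connected set $\bar\Omega$; the hypothesis that $\bar\Omega$ is connected is precisely what makes this globalization possible.
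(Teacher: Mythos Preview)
Your argument for the inequality $\bar\mu(D)\le\mu(D)$ is correct and matches the paper's proof exactly: both reduce to $\bar\mu(P_1)\le\mu(P_1)$ via $\tau_{\bar\sigma}(P_1)\subseteq\tau_\sigma(P_1)$ and the common integrand on that cell.

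For the rigidity part, the obstacle you anticipate is real and your proposed fix is not sufficient. The coincidence set $A=\{\rho_w=\rho_{\bar w}\}$ need \emph{not} be open at a point $x$ where $\bar\sigma$ has more active indices than $\sigma$: if $j$ is active for $\bar w$ at $x$ but not for $w$, then $\bar d_j<d_j$, and along directions where $j$ becomes the unique minimizer for $\bar w$ you get $\rho_{\bar w}<\rho_w$ arbitrarily close to $x$. Your ``local fact'' only forces $d_i=\bar d_i$ for the $w$-active indices, not the $\bar w$-active ones, so the clopen argument breaks down precisely at creases, and removing creases can disconnect $\bar\Omega$. Your sketch of ``spreading coincidence across adjacent cells'' does not supply the missing mechanism.

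The paper bypasses this entirely by working not with the pointwise coincidence set but with the index sets $I=\{i:\bar d_i=d_i\}$ and $J=\{i:\bar d_i<d_i\}$. For $i\in I$ one has $\tau_{\bar\sigma}(P_i)\subseteq\tau_\sigma(P_i)$, and the equalities $\mu(P_i)=\bar\mu(P_i)$ (for \emph{all} $i$, exactly as you suspected would be needed) force $|\tau_\sigma(P_i)|=|\tau_{\bar\sigma}(P_i)|$. The key step is then: for any $x\in\bigcup_{j\in J}\tau_\sigma(P_j)$, the \emph{strict} inequality $\rho_{\bar d_{j_0}}(x)<\rho_{d_{j_0}}(x)\le\rho_{\bar d_i}(x)$ for all $i\in I$ gives $x\in\text{Int}\bigl(\bigcup_{j\in J}\tau_{\bar\sigma}(P_j)\bigr)$. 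Thus the closed set $\bigcup_{j\in J}\tau_\sigma(P_j)$ sits inside an open set; connectedness of $\bar\Omega$ makes the difference contain a nonempty open set (unless $J=\emptyset$), and a measure count over $\bar\Omega=\bigcup_k\tau_\sigma(P_k)=\bigcup_k\tau_{\bar\sigma}(P_k)$ yields the contradiction $|\bar\Omega|=|\bar\Omega|+|A|$ with $|A|>0$. This avoids any topological worries about singular creases because the openness comes from the strict gap in $J$, not from local uniqueness of the active index.
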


\begin{proof}
Since $w$ and $\bar{w}$ are in $\mathcal{C}$, then $\mu(P_i)=\bar{\mu}(P_i)$ for all $2 \leq i \leq N$. By definition of $\bar{w}=(\bar d_1,\cdots ,\bar d_N)$ we have
$\bar{d_1} =k \delta M =d_1$ and
$\bar{d_i} \leq d_i$ for all $2 \leq i \leq N$, since $\mathcal{C} \subseteq W$.
Let $\sigma$ and $\bar \sigma$ be the reflectors corresponding to $w$ and $\bar w$, respectively.
Now, let $x \in \tau_{\bar{\sigma}}(P_1)$, i.e., $\rho_{\bar{d_1}}(x) \leq \rho_{\bar{d_i}}(x)$. Then by Proposition \ref{propmonot}:
$$
\rho_{d_1}(x) = \rho_{\bar{d_1}} (x) \leq \rho_{\bar{d_i}} (x) \leq \rho_{d_i} (x) \quad for \ \ all \ \  2\leq i \leq N.
$$
Hence $x \in \tau_{\sigma}(P_1)$ and so $\tau_{\bar{\sigma}}(P_1) \subseteq \tau_{\sigma}(P_1)$.
Therefore:
\begin{align*}
\bar{\mu}(P_1)&= \int_{\tau_{\bar{\sigma}}(P_1)} f(x) \dfrac{ x\cdot\nu_{\bar{d_1}}(x)}{\rho_{\bar{d_1}}^2(x)}\,dx 
= \int_{\tau_{\bar{\sigma}}(P_1)} f(x) \dfrac{ x\cdot \nu_{d_1}(x)}{\rho_{d_1}^2(x)}\,dx,  \qquad \text{since $\bar{d_1}=d_1$}\\
&\leq \int_{\tau_{\sigma}(P_1)} f(x) \dfrac{x\cdot \nu_{d_1}(x)}{\rho_{d_1}^2(x)}\,dx
=\mu(P_1).
\end{align*}
We conclude that $\bar{\mu}(D) \leq \mu(D)$.

Suppose now that $\bar{\Omega}$ is connected and we have equality, i.e.,  $\bar{\mu}(P_i)=\mu(P_i)$ for all $1\leq i\leq N$.
Let $I=\{1\leq i\leq N: \bar{d_i}=d_i\}$ and $J=\{1\leq i\leq N: \bar{d_i} < d_i\}.$ Our goal is to prove that $J$ is empty.
First notice that $I \neq \emptyset$, since $1 \in I$. Similarly as before $\tau_{\bar{\sigma}}(P_i) \subseteq \tau_{\sigma}(P_i)$ for all $i \in I$, and therefore
\begin{align*}
\mu(P_i)&= \int_{\tau_{\sigma}(P_i)} f(x)\, \dfrac{ x\cdot \nu_{d_i}(x)}{\rho_{d_i}^2(x)}\,dx
= \int_{\tau_{\sigma}(P_i)} f(x)\, \dfrac{ x\cdot\nu_{\bar{d_i}}(x)}{\rho_{\bar{d_i}}^2(x)}\,dx, \qquad \text{since $d_i=\bar{d_i}$}\\
&=\int_{\tau_{\bar{\sigma}}(P_i)} f(x)\, \dfrac{ x\cdot \nu_{\bar{d_i}}(x)}{\rho_{\bar{d_i}^2(x)}}\,dx + \int_{\tau_{\sigma}(P_i) \setminus \tau_{\bar{\sigma}}(P_i)} f(x)\, \dfrac{ x\cdot \nu_{\bar{d_i}}(x)}{\rho_{\bar{d_i}}^2(x)}\,dx \\
&= \bar{\mu}(P_i) + \int_{\tau_{\sigma}(P_i) \setminus \tau_{\bar{\sigma}}(P_i)} f(x)\, \dfrac{ x\cdot\nu_{\bar{d_i}}(x)}{\rho_{\bar{d_i}}^2(x)}\,dx.
\end{align*}
Since $\mu(P_i) = \bar{\mu}(P_i)$ and $\dfrac{x\cdot \nu_{\bar{d_i}}(x)}{\rho_{\bar{d_i}}^2(x)}\, f(x) >0$ a.e., we get
$|\tau_{\sigma}(P_i) \setminus \tau_{\bar{\sigma}}(P_i)| = 0$, and so $|\tau_{\sigma}(P_i)| = |\tau_{\bar{\sigma}}(P_i)|$ for $i\in I$.

Suppose now that $J \neq \emptyset$ and let $x \in \bigcup_{j \in J} \tau_{\sigma}(P_j)$, then $x \in \tau_{\sigma}(P_{j_0})$ for some $j_0\in J$. By Proposition \ref{propmonot} we have:
$$ \rho_{\bar w}(x)=\rho_{\bar d_{j_0}} (x) < \rho_{d_{j_0}}(x) \leq \rho_{d_i}(x) = \rho_{\bar{d_i}}(x) \qquad \text{for all $i \in I$}.$$
Then by continuity of $\rho_{\bar w}$,
$$  x \in \text{Int }\left (\bigcup_{j \in J} \tau_{\bar{\sigma}}(P_j)\right) \qquad \text{and so}\ \ \  \bigcup_{j \in J} \tau_{\sigma}(P_j) \subseteq  \text{Int }\left(\bigcup_{j \in J} \tau_{\bar{\sigma}}(P_j)\right).$$
Since $\bar{\Omega}$ is connected and $\bigcup_{j \in J} \tau_{\sigma}(P_j)$ is closed, we get that the set $ A= \bigcup_{j \in J} \tau_{\bar{\sigma}}(P_j) \setminus \bigcup_{j \in J} \tau_{\sigma}(P_j)$ contains the non empty open set
$\left(\text{Int }\left(\bigcup_{j \in J} \tau_{\bar{\sigma}}(P_j)\right)\right) \setminus \bigcup_{j \in J} \tau_{\sigma}(P_j)$ then:
$$ \bigcup_{j \in J} \tau_{\bar{\sigma}}(P_j) = \left (\bigcup_{j \in J} \tau_{\sigma}(P_j)\right) \cup A \qquad \text{with }|A|>0 .$$
This yields a contradiction because
\begin{align*}
|\bar{\Omega}| &= \left|\cup_{k=1}^{N} \tau_{\bar{\sigma}}(P_k)\right|
=\left|\cup_{i\in I} \tau_{\bar{\sigma}}(P_i)\right|+\left|\cup_{j\in J} \tau_{\bar{\sigma}}(P_j)\right|\\
&=\left|\cup_{i\in I} \tau_{\sigma}(P_i)\right|+\left|\cup_{j \in J} \tau_{\sigma}(P_j) \right|+\left|A \right| \\
&=\left|\cup_{k=1}^{k=N} \tau_{\sigma}(P_k)\right|+\left|A\right|
=|\bar{\Omega}| +|A|.
\end{align*}
We then conclude that $J=\emptyset$, and so $d_i =\bar{d_i}$ for all $1\leq i \leq N$.

\end{proof}

\setcounter{equation}{0}
\section{Solution for a general measure $\mu$}\label{sec:solutionforgeneralmeasure}

\begin{theorem}\label{thm:solutionforgeneralmeasure}
Suppose the target $D$ is compact, $O\notin D$, and either $D$ is contained on a plane, or $D$ is countable, and let $M=\max_{P\in D} OP$.
Let $\Omega\subseteq S^2$, with $|\partial \Omega|=0$,  
$f\in L^1(\bar{\Omega})$ with $f>0$ a.e, and let $\eta$ be a Radon measure on $D$.

Given $\delta >0$, $k \geq \dfrac{1+c_\delta}{1-c_\delta}$, 
with $c_\delta$ from \eqref{epsbdd}, we assume that
\begin{equation}\label{probcondgeneral}
\int_{\bar{\Omega}}f(x)\,dx \geq \dfrac{1}{C(\delta,k \delta,M)} \eta(D),
\end{equation}
where $C(\delta,k\delta,M)=\dfrac{(1-c_{\delta})^{3}}{(1+c_{\delta})(k\delta M)^{2}}$.

Given $P_0\in \supp (\eta)$, the support of the measure $\eta$,
there exists a reflector $\sigma=\{\rho(x)x\}_{x\in \bar{\Omega}}$ from $\bar{\Omega}$ to $D$ in $\mathcal A(\delta)$ such that
\[
\eta(E)\leq \int_{\tau_\sigma(E)}f(x)\,\dfrac{ x\cdot \nu_{\rho}(x) }{\rho^2 (x)}\,dx,
\]
for each Borel set $E\subseteq D$, and
\[
\eta(E)= \int_{\tau_\sigma(E)}f(x)\,\dfrac{ x\cdot \nu_{\rho}(x)}{\rho^2 (x)}\,dx,
\]
for each Borel set $E\subseteq D$ with $P_0\notin E$;
and
$\dfrac{\delta\,M}{1+c_\delta}\leq \rho(x)\leq \dfrac{k\delta M}{1-c_\delta}$ for all $x\in \bar \Omega$.
\end{theorem}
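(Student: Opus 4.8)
The plan is to solve the problem for a general Radon measure $\eta$ by approximating it with discrete measures, applying Theorem \ref{discretethmgeq} to each, and passing to the limit via the compactness and stability results established above. Fix the designated overshooting point $P_0\in\supp(\eta)$. For each $n$ I would partition $D$ into finitely many disjoint Borel pieces $\{E_i^n\}_i$ with $\diam(E_i^n)\to 0$, choosing the representative of the piece containing $P_0$ to be $P_0$ itself and labeling it as the first point. Discarding the pieces of zero $\eta$-measure, picking a representative $P_i^n$ in each remaining piece, and setting $g_i^n=\eta(E_i^n)>0$, I form $\eta_n=\sum_i g_i^n\,\delta_{P_i^n}$. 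Then $\eta_n(D)=\eta(D)$, so the hypothesis \eqref{probcondgeneral} is exactly the discrete hypothesis \eqref{probcond} for $\eta_n$, and $\eta_n\to\eta$ weakly by uniform continuity of test functions on the compact set $D$. Here Theorem \ref{discretethmgeq} is applied using the global scale $M=\max_{P\in D}OP$ in place of the maximum over the finite target: the only property of that scale used in its proof is that $M\ge OP_i$ for every target point, which guarantees $\varepsilon_i\le c_\delta$ whenever $d_i\ge\delta M$.

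Applying Theorem \ref{discretethmgeq} to $\eta_n$ (with $P_0$ playing the role of $P_1$) produces reflectors $\sigma_n=\{\rho_n(x)x\}\in\mathcal A(\delta)$ with $d_1^n=k\delta M$, $\mu_n(P_i^n)=g_i^n$ for $i\ge 2$, and $\mu_n(\{P_0\})>g_1^n$; in particular $\mu_n\ge\eta_n$ as measures on $D$, the two measures differing only at $P_0$. Since each $\sigma_n\in\mathcal A(\delta)$, Proposition \ref{propineq} gives the uniform bounds $\frac{\delta M}{1+c_\delta}\le\rho_n(x)\le\frac{k\delta M}{1-c_\delta}$, and Proposition \ref{lipschitz} gives a Lipschitz bound depending only on $\delta$ and $M$. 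By Arzel\`a--Ascoli I extract a subsequence with $\rho_n\to\rho$ uniformly; set $\sigma=\{\rho(x)x\}$. Proposition \ref{weaklem} then yields $\sigma\in\mathcal A(\delta)$ and $\mu_n\to\mu$ weakly, where $\mu$ is the reflector measure of $\sigma$ furnished by Proposition \ref{mumeasure}. The claimed estimates on $\rho$ follow by passing to the limit in the bounds on $\rho_n$.

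For the stated inequality I test against nonnegative $h\in C(D)$: weak convergence gives $\int_D h\,d\mu=\lim_n\int_D h\,d\mu_n\ge\lim_n\int_D h\,d\eta_n=\int_D h\,d\eta$, so $\mu\ge\eta$ and hence $\eta(E)\le\int_{\tau_\sigma(E)}f\,\frac{x\cdot\nu}{\rho^2}\,dx$ for every Borel $E$. For equality away from $P_0$ I exploit that the overshooting is concentrated at $P_0$: if $h\in C(D)$ with $h(P_0)=0$, then, because $\mu_n$ and $\eta_n$ are carried by the same points $P_i^n$ and agree there except at $P_1^n=P_0$,
\[
\int_D h\,d\mu_n-\int_D h\,d\eta_n=h(P_0)\bigl(\mu_n(\{P_0\})-g_1^n\bigr)=0 .
\]
Letting $n\to\infty$ gives $\int_D h\,d\mu=\int_D h\,d\eta$ for all $h\in C(D)$ vanishing at $P_0$, in particular for all $h\in C_c(D\setminus\{P_0\})$; by the Riesz representation theorem $\mu$ and $\eta$ coincide on $D\setminus\{P_0\}$, i.e. $\eta(E)=\int_{\tau_\sigma(E)}f\,\frac{x\cdot\nu}{\rho^2}\,dx$ for every Borel $E$ with $P_0\notin E$.

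I expect the main difficulty to lie not in the compactness (which is handed to us by Propositions \ref{lipschitz} and \ref{weaklem}) but in two bookkeeping points: arranging the discretization so that $\eta_n(D)=\eta(D)$ exactly, so the energy condition is preserved under approximation, while keeping $P_0$ a genuine positive-mass target point; and verifying that the discrete machinery may be run with the global scale $M$ rather than the maximum over the finite target. Once these are in place, the localization of the overshooting at the single point $P_0$ --- captured by testing against functions vanishing at $P_0$ --- is precisely what upgrades the weak inequality $\mu\ge\eta$ into an exact identity off $P_0$.
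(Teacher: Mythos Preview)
Your proposal is correct and follows essentially the same route as the paper: discretize $\eta$ by partitioning $D$ into small Borel pieces with $P_0$ kept as the distinguished representative, apply Theorem~\ref{discretethmgeq} at each stage, extract a uniformly convergent subsequence via Propositions~\ref{lipschitz} and~\ref{propineq}, and pass to the limit using Proposition~\ref{weaklem}. Your handling of the limit step---testing against continuous functions vanishing at $P_0$ and invoking Riesz on $D\setminus\{P_0\}$---is in fact a bit more careful than the paper's own argument, which asserts the Borel-set identity $\eta(E)=\mu(E)$ directly from weak convergence; and your explicit remark that Theorem~\ref{discretethmgeq} may be run with the global $M=\max_{P\in D}OP$ (since only $M\ge OP_i$ is used) makes precise a point the paper leaves implicit.
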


\begin{proof}

Partition the domain $D$ into a disjoint finite union of Borel sets with small diameter, say less than $\varepsilon$, so that $P_0$ is in the interior of one of them ($D$ has the relative topology inherited from $\R^3$). Notice that the $\eta$-measure of such a set is positive since $P_0\in \supp (\eta)$. Of all these sets discard the ones that have $\eta$-measure zero.
We then label the remaining sets $D^1_1, \ldots, D^1_{N_1}$ and we may assume $P_0\in (D_{1}^{1})^{\circ}$ and $\eta(D_{j}^{1})>0$ for $1\leq j\leq N_1$.
Next pick $P^1_i \in D^1_i$, so that $P_{1}^{1}=P_{0}$, and
define a measure on $D$ by
\[
\eta_1 = \sum_{i=1}^{N_1} \eta(D^1_i) \delta_{P^1_i}.
\]
Then from \eqref{probcondgeneral}
\begin{equation*}
\eta_1(D) = \eta(D) \leq C(\delta,k \delta,M)\, \int_{\bar \Omega}f(x)\,dx.
\end{equation*}
Thus by Theorem \ref{discretethmgeq}, there exists a reflector
\[
\sigma_1 = \left\{\p_1(x)x: \p_1(x) = \min_{1 \leq i \leq N_1} \dfrac{d_i^1}{1-\varepsilon_i^1\,x\cdot  m^1_i }\right\}
\]
with $d_1^1=k\delta M$, $d_i^1\geq \delta M$ for $2\leq i\leq N_1$, $m^1_i=\dfrac{\overrightarrow{OP^1_i}}{OP^1_i}$ for $1 \leq i \leq N_1$, and satisfying
\begin{equation*}
\eta_1(E) \leq \int_{\tau_{\sigma_1}(E)}f(x)\,\dfrac{ x\cdot \nu_{\rho_1}(x)}{\rho_1^2 (x)}\,dx,
\end{equation*}
with equality if $P_0\notin E$, for each $E$ Borel subset of $D$.

Next subdivide each $D_{j}^{1}$, $1\leq j\leq N_1$, into a finite number of disjoint Borel subsets with diameter less than $\varepsilon/2$, and such that
$P_0$ belongs to the interior of one of the subdivisions of $D_1^1$. Again notice that since $P_0\in \supp (\eta)$, the set in the new subdivision containing $P_0$ has positive $\eta$-measure.
Again discard all sets having $\eta$-measure zero and label them $D^{2}_1, \ldots, D^{2}_{N_{2}}$.
We may assume by relabeling that $D_{1}^{2}\subseteq D_{1}^{1}$ and $P_0\in (D_{1}^{2})^{\circ}$.
Next pick $P^{2}_i\in D^{2}_i$, such that $P_{1}^{2}=P_0$ and consider the measure $\eta_{2}$ on $D$ defined by:
\[
\eta_{2} = \sum_{i=1}^{N_{2}} \eta(D^{2}_i) \delta_{P^{2}_i}.
\]
Then
\begin{equation*}
\eta_{2}(D) = \eta(D) \leq C(\delta,k \delta,M)\,\int_{\bar{\Omega}}f(x)\,d\sigma( x).
\end{equation*}

Once again by Theorem \ref{discretethmgeq}, there exists a reflector
\[
\sigma_{2} = \left\{\p_{2}(x)x: \p_{2}(x) = \min_{1 \leq i \leq N_{2}} \dfrac{d_i^2}{1-\varepsilon_i^2\, x\cdot m^{2}_i }\right\}
\]
with $d_1^2=k\delta M$, $d_i^2\geq \delta M$ for $2\leq i\leq N_2$, $m^2_i=\dfrac{\overrightarrow{OP^2_i}}{OP^2_i}$ for $1 \leq i \leq N_2$, and satisfying
\begin{equation*}
\eta_2(E) \leq \int_{\tau_{\sigma_2}(E)}f(x)\,\dfrac{ x\cdot \nu_{\rho_2}(x) }{\rho_2^2 (x)}\,dx,
\end{equation*}
with equality if $P_0\notin E$, for each $E$ Borel subset of $D$.

By this way for each $\ell =1,2,\cdots $, we obtain a finite disjoint sequence of Borel sets $D_{j}^{\ell}$, $1\leq j\leq N_{\ell}$, with diameters less that $\varepsilon/2^{\ell}$ and $\eta(D_{j}^{\ell})>0$ such that
$P_0\in (D_{1}^{\ell})^{\circ}$, $D_{1}^{\ell+1}\subseteq D_1^{\ell}$, and pick $P_{j}^{\ell}\in D_{j}^{\ell}$ with $P_{1}^{\ell}=P_0$, for all $\ell$ and $j$.
The corresponding measures on $D$ are given by
\[
\eta_\ell = \sum_{i=1}^{N_{\ell}} \eta(D^{\ell}_i) \delta_{P^{\ell}_i}
\]
satisfying
\[
\eta_{\ell}(D) = \eta(D) \leq C(\delta,k \delta,M)\,\int_{\bar \Omega}f(x)\,dx.
\]
We then have a corresponding sequence of reflectors
given by
\begin{equation*}
\sigma_{\ell} = \left\{\p_{\ell}(x)x: \p_{\ell}(x) = \min_{1 \leq i \leq N_{\ell}} \dfrac{d_i^\ell}{1-\varepsilon_i^\ell\, x\cdot m^{\ell}_i }\right\}
\end{equation*}
with $d_1^\ell=k\delta M$, $d_i^\ell \geq \delta M$ for $2\leq i\leq N_\ell$, $m^\ell_i=\dfrac{\overrightarrow{OP^\ell_i}}{OP^\ell_i}$ for $1 \leq i \leq N_\ell$, and satisfying
\begin{equation*}
\eta_\ell(E) \leq \int_{\tau_{\sigma_\ell}(E)}f(x)\,\dfrac{ x\cdot \nu_{\rho_\ell}(x) }{\rho_\ell^2 (x)}dx,
\end{equation*}
with equality if $P_0\notin E$, for each $E$ Borel subset of $D$.
Since $\sigma_\ell\in \mathcal A(\delta)$ for all $\ell$,
it follows by Proposition \ref{lipschitz} that $\rho_\ell$ are Lipschitz continuous in $\bar{\Omega}$
with a constant depending only on $\delta$ and $M$.
In addition, from Proposition \ref{propineq}, and since $d_1^\ell=k\delta M$, we have
\[
\dfrac{\delta\,M}{1+c_\delta}\leq \rho_\ell(x)\leq \dfrac{k\delta M}{1-c_\delta}\qquad \forall \ell, x.
\]
By Arzel\'a-Ascoli theorem, there is a subsequence, denoted also by $\rho_\ell$, converging to $\rho$ uniformly in $\bar \Omega$.
From Proposition \ref{weaklem}, $\sigma=\{\rho(x)x\}$ is a reflector in $\mathcal A(\delta)$ and the reflector measures
$\mu_\ell$, corresponding to $\sigma_\ell$, converge weakly to $\mu$, the reflector measure corresponding to $\sigma$.
We also have that $\eta_\ell$ converges weakly to $\eta$,
and $\eta_\ell(E) = \mu_\ell(E)$ for every Borel set $E\subseteq D$ with $P_0\notin E$, and each $\ell$.
Then  we obtain that
$\eta(E)=\mu(E)$ for every Borel set $E\subseteq D$ with $P_0\notin E$.
Since $\eta_\ell(E) \leq \mu_\ell(E)$ for any Borel set $E \subseteq D$, we also conclude that $\eta(E)\leq \mu(E)$.

\end{proof}

\begin{remark}\label{rmk:generalsolutionwithoutobstruction}\rm
Remark \ref{rmk:discretesolutionwithoutobstruction} also applies to Theorem \ref{thm:solutionforgeneralmeasure}.
\end{remark}

%

\subsection{Discussion about overshooting}\label{sub:overshooting}
In this Subsection, we will discuss the issue of overshooting to the point $P_0\in \supp (\eta)$ and show that there is a reflector that minimizes the overshooting.
Indeed, let $P_0\in \supp (\eta)$ and
\begin{equation}\label{eq:infimumofreflectors}
I=\inf\left\{\int_{\tau_\sigma(P_0)}f(x)\,\dfrac{ x\cdot \nu_{\rho}(x)}{\rho^2 (x)}\,dx:\text{$\sigma$ is a reflector as in Theorem \ref{thm:solutionforgeneralmeasure}} \right\}.
\end{equation}
There exists a sequence of reflectors $\sigma_k=\{\rho_k(x)x\}$ such that
\[
I=\lim_{k\to \infty}\int_{\tau_{\sigma_k}(P_0)}f(x)\,\dfrac{ x\cdot \nu_{\rho_k}(x)}{\rho_k^2 (x)}\,dx.
\]
Therefore from Proposition \ref{lipschitz}, $\rho_k$ are uniformly Lipschitz in $\bar{\Omega}$, and by Theorem \ref{thm:solutionforgeneralmeasure} uniformly bounded. Then by Arzel\'a-Ascoli
there exists a subsequence, also denoted $\rho_k$, converging uniformly to $\rho$.
By Proposition \ref{weaklem}, $\sigma=\{\rho(x)x\} \in \mathcal A(\delta)$, and the corresponding reflector measures $\mu_k$ and $\mu$ satisfy
$\mu_k\to \mu$ weakly.
In particular, $\displaystyle I=\int_{\tau_{\sigma}(P_0)}f(x)\,\dfrac{ x\cdot \nu_{\rho}(x)}{\rho^2 (x)}\,dx$, and we are done.

We now compare $\mu(P_0)$ with $\eta(P_0)$.

\textbf{Case 1:} $\mu(P_0)=\int_{\tau_\sigma(P_0)}f(x)\,\dfrac{ x\cdot \nu_{\rho}(x)}{\rho^2 (x)}\,dx >0$.

In  this case, we shall prove that for each open set $G\subseteq D$, with $P_0\in G$, we have:
\begin{equation}\label{eq:Toveropenisbiggerthanmu}
\int_{\tau_\sigma(G)}f(x)\,\dfrac{x\cdot \nu_{\rho}(x)}{\rho^2 (x)}\,dx
>\eta(G),
\end{equation}
in other words, the reflector overshoots on each open set containing $P_0$.
Notice that from Theorem \ref{thm:solutionforgeneralmeasure} we have equality in \eqref{eq:Toveropenisbiggerthanmu} for each Borel set not containing $P_0$.
Suppose by contradiction there exists an open set $G$, with $P_0\in G$, such that
\[
\int_{\tau_\sigma(G)}f(x)\,\dfrac{ x\cdot \nu_{\rho}(x) }{\rho^2 (x)}\,dx
=\eta(G).
\]
Under this assumption, we are going to prove that $\dfrac{ x\cdot \nu_{\rho}(x)}{\rho^2 (x)}$ is constant a.e.
We have $\tau_\sigma(D)=\bar{\Omega}$, and
$\tau_\sigma(D)=\tau_\sigma(D \setminus G)\cup \tau_\sigma(G)$ where in the union the sets are disjoint a.e.
Then
\begin{align*}
\int_{\bar{\Omega}}f(x)\,\dfrac{x\cdot \nu_{\rho}(x)}{\rho^2 (x)}\,dx
&=
\int_{\tau_\sigma(D)}f(x)\,\dfrac{x\cdot \nu_{\rho}(x)}{\rho^2 (x)}\,dx
\\
&=
\int_{\tau_\sigma(D\setminus G)}f(x)\,\dfrac{x\cdot  \nu_{\rho}(x)}{\rho^2 (x)}\,dx
+
\int_{\tau_\sigma(G)}f(x)\,\dfrac{x\cdot \nu_{\rho}(x)}{\rho^2 (x)}\,dx
\\
&=\eta(D\setminus G)+\eta(G)=\eta(D)
\leq C(\delta,k \delta,M)\,\int_{\bar{\Omega}}f(x)\,dx
\end{align*}
from \eqref{probcondgeneral}, and so we get
\begin{equation}\label{eq:integraltauminusconstantnonpositive}
\int_{\bar{\Omega}}f(x)\,\left(\dfrac{x\cdot \nu_{\rho}(x)}{\rho^2 (x)}-C(\delta,k \delta,M)\right)\,dx\leq 0.
\end{equation}
As in the proof of Lemma \ref{blwbdd} we have $\dfrac{x\cdot \nu_{\rho}(x)}{\rho^2 (x)}-C(\delta,k \delta,D)\geq 0$, and since $ f>0$ a.e, equation  \eqref{eq:integraltauminusconstantnonpositive}
implies that
\begin{equation}\label{eq:constantoutsidesingularpts}
\dfrac{x\cdot \nu_{\rho}(x) }{\rho^2 (x)}=C(\delta,k \delta,M),\qquad \text{for a.e. $x\in \bar{\Omega}$}.
\end{equation}
We will show this implies $\rho$ is constant.
Notice that since $\sigma \in \mathcal A(\delta)$ we can apply inequality \eqref{eq:inequalityatnonsingularpoints} at each non singular point.
So from equation \eqref{eq:constantoutsidesingularpts} and the form of the constant $C$
\[
\rho(y)^2=\dfrac{y\cdot \nu_{\rho}(y) }{C(\delta,k \delta,M)}\geq \left( \dfrac{k\delta M}{1-c_\delta}\right)^2
\]
at a.e. $y\in \bar{\Omega}$. Since $\rho(x)\leq \dfrac{k\delta M}{1-c_\delta}$,
so $\rho(x)$ is constant a.e., and since $\rho$ is continuous, then is constant in $\bar{\Omega}$ . This is a contradiction. In fact, suppose $\rho$ is constant, then $\sigma$ is a piece of sphere with center $O$ and by Snell law every ray emitted from $O$ is reflected off by $\sigma$ to $O$. Recall that $m_0=\dfrac{\overrightarrow{OP_0}}{|OP_0|}$.
So if $x \in \tau_\sigma (P_0)$, then $x,m_0$ are collinear,  i.e., $x= \pm m_0$, therefore $|\tau_{\sigma}(P_0)|$=0, contradicting the case assumption.

Therefore \eqref{eq:Toveropenisbiggerthanmu} is proved.

Notice that if $\eta(P_0)>0$, then $\mu(P_0)>0$ and so the reflector overshoots.

\textbf {Case 2:} $\mu(P_0)=\int_{\tau_\sigma(P_0)}f(x)\,\dfrac{ x\cdot \nu_{\rho}(x) }{\rho^2 (x)}\,dx =0$.

This implies that $|\tau_\sigma(P_0)|=0$ and $\eta(P_0)=0$. Then for each $G$ open neighborhood of $P_0$ we have
\begin{align*}
\mu(G)=\int_{\tau_\sigma(G)}f(x)\,\dfrac{ x\cdot \nu_{\rho}(x)}{\rho^2 (x)}\,dx &=\int_{\tau_\sigma(G \setminus P_0)}f(x)\,\dfrac{x\cdot \nu_{\rho}(x)}{\rho^2 (x)}\,dx +\int_{\tau_\sigma(P_0)}f(x)\,\dfrac{ x\cdot \nu_{\rho}(x) }{\rho^2 (x)}\,dx\\
&=\eta (G \setminus P_0)\\
&=\eta(G).
\end{align*}
This identity also holds for every open set not containing $P_0$, and so for any open set in $D$. Since both measures $\mu$ and $\eta$ are outer regular
then they are equal. Therefore in this case the reflector doesn't overshoot.

\subsection{Optimization of $\delta$ in \eqref{probcondgeneral}}
Inequality \eqref{probcondgeneral} is a sufficient condition for the existence of a reflector and depends on $\delta>0$.
If we choose $\delta$ so that the right hand side of \eqref{probcondgeneral} 
is minimum, 
then 
we can choose the radiant intensity $f$ such that we have equality in \eqref{probcondgeneral}
and this will give the minimum energy required at the outset to have a reflector.
We calculate here the value of $\delta$ for which the RHS of \eqref{probcondgeneral} in minimum.
Indeed, the only conditions imposed on $\delta$ and $k$ are that $\delta >0$ and $k \geq \dfrac{1+c_\delta}{1-c_\delta}$.
We then have
$$C(\delta, k\delta, M)=\dfrac{(1-c_{\delta})^{3}}{(1+c_{\delta})(k\delta M)^{2}}\leq \dfrac{1}{M^2}\dfrac{(1-c_{\delta})^5}{\delta^2(1+c_\delta)^3}:=\dfrac{1}{M^2}\,r(\delta)$$
with $c_\delta = -\delta + \sqrt{1+\delta^2}$.
After some computations we get
\begin{itemize}
\item $\lim_{\delta \to 0} r(\delta)=0 .$
\item $\lim_{\delta \to \infty} r(\delta)=0 .$
\item $\dfrac{dr}{d\delta}= \dfrac{2(1-c_\delta)^4 (4c_{\delta}-1)}{\delta ^2 \sqrt{1+\delta ^2}(1+c_\delta)^4} .$
\end{itemize}
Therefore the absolute maximum of $r$ in $(0,+\infty)$ is attained when
$
c_\delta = \dfrac{1}{4}$, that is, when
$\delta =\dfrac{15}{8}$.
The corresponding value of $k$ is $k=\dfrac{1+\dfrac{1}{4}}{1-\dfrac{1}{4}}=\dfrac{5}{3}$; and the corresponding constant is
$$C\left(\dfrac{15}{8},\dfrac{25}{8}, M\right)=\dfrac{1}{M^2} r\left(\dfrac{15}{8}\right)= \dfrac{108}{3125 \,M^2}.$$
However, if we take into account the obstruction issue discussed in Subsection \ref{subset:physicalvisibilityissues}, we need $\delta> \delta_D$. 
Therefore, if $\delta_D < \dfrac{15}{8}$, 
then one can take $\delta=15/8$ in the argument above to obtain a constant that minimizes the outset energy required to obtain a reflector in $\mathcal A(15/8)$ that does not obstruct the target.
Otherwise, if $\delta_D \geq \dfrac{15}{8}$, 
for any $\delta >\delta_D$ there is a reflector in $\mathcal A(\delta)$ solving our problem and not obstructing the target but
the corresponding constant $C(\delta,k\delta,M)$ cannot be maximized.

\subsection{An observation}\label{sec:Generalproblem}
We remark that the existence results in this paper can be extended to the case when the function 
$\dfrac{x\cdot \nu(x)}{\rho(x)^2}$ in \eqref{eq:pbequ} is replaced by $F(x\cdot \nu(x),\rho(x))$ where $F(u,v)$ is a continuous and strictly positive function of two variables.
In particular, if $F$ is constant, then we recover the results of \cite{kochengin-oliker:nearfieldreflector}.
Similarly to Proposition \ref{mumeasure} we define the finite Borel measure on $D$ by 
$$\mu(E)=\int_{\tau_{\sigma}(E)}f(x)\,F\left(x\cdot \nu(x),\rho(x)\right)\,dx$$
when $F$ is defined in $\displaystyle{\left[\dfrac{1-c_\delta}{1+c_\delta},1\right]\times \left[\dfrac{\delta M}{1+c_\delta}, \max_{x\in \bar{\Omega}}\rho(x)\right]}$.
In addition, the stability property from Proposition \ref{weaklem} holds true also in this case 
when $F$ is continuous on $\displaystyle{\left[\dfrac{1-c_\delta}{1+c_\delta},1\right]\times \left[\dfrac{\delta M}{1+c_\delta}, b\right]}$.
With the set up of Lemma \ref{blwbdd}, we obtain the following inequality that replaces \eqref{cond} 
\begin{equation}\label{generalcondition}
\mu_w(D)=\int_{\bar{\Omega}}f(x)\,F\left(x\cdot \nu(x),\rho(x)\right)\,dx \geq \left(\min_{(u,v)\in K_{\delta,\delta'}}F(u,v)\right) \int_{\bar{\Omega}}f(x)\,dx
\end{equation}
where $F$ is continuous on $K_{\delta,\delta'}:=\left[\dfrac{1-c_\delta}{1+c_\delta},1\right]\times\left[\dfrac{\delta M}{1+c_\delta},\dfrac{\delta' M}{1-c_\delta}\right]$, and with equality in \eqref{generalcondition} if $F$ is constant.
Finally, the analogue of Theorem \ref{thm:solutionforgeneralmeasure} now follows under the assumption
\begin{equation}\label{generalprobcondgeneral}
\int_{\bar{\Omega}}f(x)\,dx \geq \dfrac{1}{\min_{(u,v)\in K_{\delta,k\delta}}F(u,v)}\eta(D),
\end{equation}
with $F$ continuous on $K_{\delta,k\delta}$.
If $F$ is constant and there is equality in \eqref{generalprobcondgeneral}, then there is no overshooting, i.e., 
\begin{equation*}
\int_{\tau_\sigma(E)}f(x)\,F\left(x\cdot \nu(x),\rho(x)\right)\,dx = \eta(E)
\end{equation*}
for each $E\subseteq D$ Borel set.

\setcounter{equation}{0}
\section{Derivation of the near field differential equation}\label{sec:NearFieldEquation}
Set $X=(x,x_3)$ a point on $\bar{\Omega} \subseteq S^{2}$, with $x=(x_1,x_2)$. Consider the reflector $\sigma=\{\rho(X)X\}$ solution of the near field problem from $\bar{\Omega}$ to $D$ with radiant intensity $f(x)$, with a measure $\eta$ defined on $D$ that is
absolutely continuous with respect to Lebesgue measure, i.e., $\eta= g dy$ with $g$ a positive function in $L^1(D)$. Let $\mathcal U=\{x:
(x,\sqrt{1-|x|^2})\in \bar{\Omega}\}$. 
If $\bar{\Omega}$ in the upper hemisphere $S_+^2 $, we can identify $\bar{\Omega}$ with $\mathcal U$
and assume that $\rho$ is a $C^2$ function on $\mathcal U$. 
If $X\in S^2$ and $Y \in S^{2}$ is the direction of the ray reflected off by $\sigma$, then by Snell's Law
\begin{equation}\label{Snellrefl}
Y=X-2\,\left(X \cdot \nu(X)\right)\,\nu(X)
\end{equation}
where $\nu $ is the outer unit normal to the reflector $\sigma$.
To derive the equation of the problem, we assume that $D \subseteq \{x_3=0\}$.
If $D$ is contained in a general surface, then the equation is deduced from this as in \cite{gutierrez-huang:nearfieldrefractor}. 
If the surface $\sigma$ reflects off the ray with direction $X$ into the point $Z \in D$, then 
$$Z=\rho(x)X+ |Z-\rho(x)X|Y.$$
Let $T$ be the map on $\mathcal U$,  $x\to Z$. Since $D \subseteq \{x_3=0\}$, $T(x)=(z_1,z_2,0)$.
If $dS_{\bar{\Omega}}$, $dS_{D}$ and $dS_{\mathcal U}$ denote the surface area elements of $\bar{\Omega}, D,\mathcal U$, respectively, then we have:
$$ |\det(DT)| =\dfrac{dS_{D}}{dS_{\mathcal U}} \qquad  dS_{\bar{\Omega}}=\dfrac{1}{\sqrt{1-|x|^2}} dS_U,$$
where $DT$ denotes the Jacobian of $T$.
From \eqref{eq:pbequ} we have that for every set $E \subseteq D$
$$\int_{\tau_{\sigma}(E)} f(x)\,\dfrac{x \cdot \nu(x)}{\rho(x)}\,dx \geq \int_E g(y)\,dy,$$
and as in \cite[Formula (8.11)]{gutierrez-huang:nearfieldrefractor} we get
\begin{equation}\label{simplequ}
|\det( DT)| \leq  \dfrac{f(x)\,\left(X\cdot \nu(X)\right)}{\sqrt{1-|x|^2}\, \rho^2 (x)\,  g(T(x))}.
\end{equation}
From \cite[Lemma 8.1]{gutierrez-mawi:refractorwithlossofenergy} we have
\begin{equation}\label{eq:formulasfornormal}
\nu= \dfrac{-\hat {D}\rho  +X\,\left(\rho+D\rho\cdot x\right)}{\sqrt{\rho^2+|D\rho|^2-(D\rho\cdot x)^2}},\qquad \text{ and }\qquad
X\cdot \nu = \dfrac{\rho}{\sqrt{\rho^2+|D\rho|^2-(D\rho\cdot x)^2}},
\end{equation}
where $D\rho(x) =\left(\partial _1 \rho(x),\partial_2 \rho(x)\right)$, $\hat{D}\rho(x) =\left(\partial _1 \rho(x),\partial_2 \rho(x),0\right)$.

Applying the calculations from \cite[Appendix]{gutierrez-huang:nearfieldrefractor} when $\kappa=1$ and $n=3$ we get
$$\det (DT) = \det \left(D^2\rho + \mathcal A(x,\rho,D\rho) \right)\,(2\rho)^2\, F\,\left(F+D\rho \cdot D_p F\right)$$
with
$F:=F(x,\rho(x),D\rho(x))$,  
$$F(x,u,p)=\dfrac{\dfrac{u}{\sqrt{u^2+|p|^2-(p\cdot x)^2}}}{-\sqrt{u^2+|p|^2-(p\cdot x)^2}+2(u+p\cdot x)\,\dfrac{u}{\sqrt{u^2+|p|^2-(p\cdot x)^2}}},$$
and
$$\mathcal A(x,\rho,D\rho)=\dfrac{1}{\rho\,(F+D\rho \cdot D_p F)}\,\left[(F+\rho F_u)D\rho\otimes D\rho+\rho D\rho \otimes D_x F\right].\footnote{For $a, b$ vectors in $\R^3$, $a\otimes b$ is the matrix $a^t b$.}$$
Replacing these formulas in inequality \eqref{simplequ},  we conclude that $\rho$ satisfies the following Monge-Amp\`ere type equation
\begin{align}\label{MongEqu}
&\left|\det\left(D^2\rho +\mathcal A\left(x,\rho(x),D\rho(x)\right)\right)\right|\\
&\leq \dfrac{f(x)}{4\, g(T(x))\, \sqrt{1-|x|^2}\, \left|F (F+D\rho \cdot D_p F)\right|\,\rho^3\sqrt{\rho^2+|D\rho|^2-(x\cdot D\rho)^2}}.\notag
\end{align}

\setcounter{equation}{0}
\section{The Far Field Case}\label{sec:Farfield}
The method used in the above sections can be applied similarly to construct a far field reflector, that is, the target $D$ is replaced by a set $\Omega^*\subseteq S^2$. 
Suppose radiation emanates from the origin $O$ with given radiant intensity $f(x)$ for each direction $x\in \Omega\subseteq S^2$, and we are given a Radon measure $\eta$ on $\Omega^*$. We want to construct a reflector surface $\sigma=\{\rho(x)x\}_{x\in \Omega}$ such that the radiation is reflected off by $\sigma$ into $\Omega^*$ 
such that
\begin{equation} \label{parapbequ}
\int_{\tau_\sigma (E)}f(x) \dfrac{x\cdot \nu(x)}{\rho(x)^2}\,dx\geq \eta(E)
\end{equation}
for each $E\subseteq \Omega^*$ and where $\tau_\sigma(E)$ is the collection of directions $x \in \Omega$ reflected off by $\sigma$ into 
$E$.
We assume the initial energy condition 
\begin{equation}\label{parainitial}
\int_{\Omega} f(x) \,dx \geq \dfrac{1}{C} \eta(\Omega^*)
\end{equation}
where $C$ is constant depending only on $\Omega, \Omega^*$ and on how close from the source we want to place the reflector.
In fact, given $m_0\in \supp(\eta)$ we will construct a reflector $\sigma$ such that we have equality in (\ref{parapbequ}) for every set $E$ not containing $m_0$.
As in the near field case, 
we next introduce the definition of solution and some properties now done with paraboloids of revolution.

\subsection{Paraboloids of revolution}
A paraboloid of revolution with focus $O$ and unit axis direction $m$ can be interpreted as an ellipsoid of eccentricity $1$ and foci $O$ and $P$, where $P$ is a point at infinity in the direction $m$. Therefore, by equations (\ref{eq:formulaforthefocalparameter}) and (\ref{ellipse}), the polar equation of a paraboloid $P_d(m)$ of focus O and axis direction $m$ and focal parameter $d$ is
\begin{equation}\label{def:paraofpolarradius}
\rho_d(x)= \dfrac{d}{1- x\cdot m } \qquad \text{with $x \in S^2 \setminus \{m\}$}
\end{equation}
In this case, it's immediate that for fixed $x$ and $m$, $\rho_d$ is strictly increasing in $d$.
By equation (\ref{nueq}), such a paraboloid has outer unit normal $\nu$ with
\begin{equation}\label{paranueq}
\nu(x)=\dfrac{x-m}{|x-m|}.
\end{equation}
We also have from the Snell law the well known fact that each ray emitted from $O$ with direction in $S^2 \setminus \{m\}$ is reflected off by $P_d(m)$ into the direction $m$.

\subsection{Far field reflectors and measures}

\begin{defi}
Let $\Omega\subseteq S^{2}$, with $|\partial \Omega|=0$. The surface $\sigma=\{\rho(x)x\}_{x\in \Bar{\Omega}}$ is a reflector from $\Bar{\Omega}$ to $\Bar{\Omega}^*$ if for each $x_0\in \bar{\Omega}$ there exists a paraboloid $P_d(m)$ with $m \in \Bar{\Omega}^* $ that supports $\sigma$ at $\rho(x_0)\,x_0$. That is, $P_d(m)$ is given by $\rho_d(x)=\dfrac{d}{1- \,x\cdot m}$ with $x \neq m$ and satisfies $\rho(x)\leq \rho_d(x)$ for all $x\in \Bar{\Omega}$ with equality at $x=x_0$.
\end{defi}
In order to prevent the degenerate case we will suppose that $x \cdot m \neq 1$ for all $x \in \Bar{\Omega}$ and $m \in \Bar{\Omega}^*$. By compactness there exist a uniform $0 < \delta <1 $ such that:
$$ x \cdot m \leq 1- \delta \qquad \forall x \in \Bar{\Omega}, m \in \Bar{\Omega}^*$$

Notice that each reflector is concave and therefore continuous.

The reflector mapping associated with a reflector $\sigma$ is given by
 \[
 \mathcal N_\sigma(x_0)=\{m\in \Bar{\Omega}^*: \text{there exists $P_d(m)$ supporting $\sigma$ at $\rho(x_0)x_0$}\};
 \]
 and the tracing mapping is
\[
\tau_\sigma(m)=\{x\in \Bar{\Omega}: m\in \mathcal N_\sigma(x) \}.
\]

\begin{prop}\label{impparaine}
Let $0<\delta<1$ and suppose $\Bar{\Omega} \cdot \Bar{\Omega}^* \leq 1- \delta$. 
If $P_d(m)$ is given in  ($\ref{def:paraofpolarradius}$) with $m \in \Bar{\Omega}^*$, then
\begin{equation}\label{paraineq}
\dfrac{d}{2} \leq \min_{x\in \Bar{\Omega}} \rho_d(x)\leq \max_{x\in \Bar{\Omega}} \rho_d(x) \leq \dfrac{d}{\delta}.
\end{equation}
\end{prop}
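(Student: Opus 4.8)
The plan is to bound the denominator $1-x\cdot m$ of the polar radius \eqref{def:paraofpolarradius} uniformly for $x\in\Bar{\Omega}$ and $m\in\Bar{\Omega}^*$, and then invert. This is the paraboloid counterpart of Proposition \ref{propineq}, but it is considerably simpler: since a paraboloid is the limiting ellipsoid of eccentricity $1$, there is no quadratic inequality for the eccentricity to solve, and everything reduces to estimating the inner product $x\cdot m$.

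First I would record that, because $x$ and $m$ are unit vectors, $-1\le x\cdot m\le 1$ by Cauchy--Schwarz, and hence $1-x\cdot m\le 2$. Combined with the standing nondegeneracy hypothesis $\Bar{\Omega}\cdot\Bar{\Omega}^*\le 1-\delta$, which says $x\cdot m\le 1-\delta$ and therefore gives $1-x\cdot m\ge\delta$, I obtain
\[
\delta\le 1-x\cdot m\le 2\qquad\text{for all }x\in\Bar{\Omega},\ m\in\Bar{\Omega}^*.
\]
In particular the denominator stays bounded away from $0$ (this is precisely the role of the assumption $\delta>0$, which also keeps the axis direction $m$ out of $\Bar{\Omega}$), so $\rho_d$ is well defined and positive on $\Bar{\Omega}$.

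Finally, since $d>0$ and $\rho_d(x)=d/(1-x\cdot m)$ is a decreasing function of the quantity $1-x\cdot m$, substituting the two bounds above yields
\[
\frac{d}{2}\le\frac{d}{1-x\cdot m}=\rho_d(x)\le\frac{d}{\delta}
\]
for every $x\in\Bar{\Omega}$; taking the minimum and maximum over $x\in\Bar{\Omega}$ then gives \eqref{paraineq}. I do not expect any real obstacle here: the only place the hypothesis is used is in the lower bound $1-x\cdot m\ge\delta$, which simultaneously guarantees well-posedness of $\rho_d$ and produces the upper estimate $\rho_d\le d/\delta$, while the lower estimate $\rho_d\ge d/2$ uses nothing beyond $x,m\in S^2$.
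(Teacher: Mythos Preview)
Your proof is correct and follows exactly the same idea as the paper: the paper's proof is the single line ``The proof follows from the fact that $-1\le x\cdot m\le 1-\delta$,'' and you have simply written out the trivial inversion step in detail.
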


\begin{proof}
The proof follows from the fact that $-1 \leq x\cdot m \leq 1-\delta$.
\end{proof}

\begin{remark}\rm
We are interested in reflectors $\sigma$ such that they are at a positive distance from the origin.
Say we want $\rho(x)\geq a>0$ for all $x \in \Bar{\Omega}$.  To obtain this, by Proposition \ref{impparaine}  it is enough to pick $d$ such that $d\geq 2a$ and impose the condition that $d\geq 2a$ for each supporting paraboloid $P_d(m)$ to $\sigma$.
\end{remark}

\begin{defi}
For each $a>0$, we introduce the class
$
\mathcal A(a)$ of all reflectors $\sigma=\{\rho(x)x\}_{x\in \Bar{\Omega}}$ from $\Bar{\Omega}$ to $\Bar{\Omega}^*$  such that for each $x_0 \in \Bar{\Omega} $ there exist a supporting paraboloid $P_d(m)$ to $\sigma$ at $\rho(x_0)x_0$ with $m \in \Bar{\Omega}^*$ and focal parameter $d\geq 2 a$.
\end{defi}
\begin{prop}\label{paralipschitz}
If $\sigma=\{\rho(x)x\}$ is a far field reflector in $\mathcal A(a)$ and $\Bar{\Omega} \cdot \Bar{\Omega}^* \leq 1- \delta$, then $\rho$ is Lipschitz in $\Bar{\Omega}$, is bounded\footnote{The bound, and the Lipschitz constant is not necessarily uniform for all $\sigma\in \mathcal A(a)$.}, and the surface $\sigma$ is strictly convex.
\end{prop}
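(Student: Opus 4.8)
The plan is to transcribe the proof of Proposition \ref{lipschitz} to the far field setting, replacing ellipsoids by paraboloids and using Proposition \ref{impparaine} in place of Proposition \ref{propineq}. The three assertions (boundedness, the Lipschitz estimate, strict convexity) are handled in that order.

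\textbf{Boundedness.} First I would fix an arbitrary $x_0\in \bar{\Omega}$ and let $P_{d_0}(m_0)$ be a supporting paraboloid to $\sigma$ at $\rho(x_0)x_0$. Since $\rho(x)\le \rho_{d_0}(x)$ for every $x\in \bar{\Omega}$, Proposition \ref{impparaine} gives $\rho(x)\le d_0/\delta=:b$, so $\rho$ is bounded above. The bound $b$ depends on $d_0$, hence on $\sigma$, which is exactly why it need not be uniform over $\mathcal A(a)$. For a positive lower bound, note that if $P_{d(x)}(m(x))$ supports $\sigma$ at $\rho(x)x$, then at the contact point $\rho(x)=\rho_{d(x)}(x)\ge d(x)/2\ge a$ by Proposition \ref{impparaine} and the definition of $\mathcal A(a)$.

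\textbf{Lipschitz estimate.} For $x,y\in \bar{\Omega}$ I would take a supporting paraboloid $P_d(m)$ at $\rho(x)x$, so that $\rho(z)\le \rho_d(z)$ with equality at $z=x$, and therefore
\[
\rho(y)-\rho(x)\le \rho_d(y)-\rho_d(x)=\frac{d\,(y-x)\cdot m}{(1-x\cdot m)(1-y\cdot m)}.
\]
The hypothesis $\bar{\Omega}\cdot\bar{\Omega}^{*}\le 1-\delta$ forces $1-x\cdot m\ge \delta$ and $1-y\cdot m\ge \delta$, while $|(y-x)\cdot m|\le |y-x|$ since $|m|=1$. At the contact point $\rho(x)=d/(1-x\cdot m)\ge d/2$, so $d\le 2\rho(x)\le 2b$. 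Combining these bounds yields $\rho(y)-\rho(x)\le (2b/\delta^{2})\,|y-x|$, and interchanging the roles of $x$ and $y$ gives $|\rho(y)-\rho(x)|\le (2b/\delta^{2})\,|x-y|$.

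\textbf{Strict convexity.} I would let $\Pi_0$ be the plane tangent to $P_{d_0}(m_0)$ at $\rho(x_0)x_0$, with polar radius $p_0$. A paraboloid of revolution is strictly convex, so $\rho_{d_0}(x)<p_0(x)$ for all $x\ne x_0$, with equality at $x_0$. Since $\rho\le \rho_{d_0}$ on $\bar{\Omega}$, the plane $\Pi_0$ supports $\sigma$ at $\rho(x_0)x_0$ and touches it only there; as $x_0$ is arbitrary, $\sigma$ is strictly convex. The only step that requires care is bounding the focal parameter $d$ at each contact point by $2b$, which couples the (non-uniform) supremum of $\rho$ to the Lipschitz constant; the remaining estimates are a routine simplification of the ellipsoid computation, now cleaner because the paraboloid polar equation $\rho_d(x)=d/(1-x\cdot m)$ carries no eccentricity factor. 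I do not anticipate a genuine obstacle.
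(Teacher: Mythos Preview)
Your proposal is correct and follows essentially the same approach as the paper: fix one supporting paraboloid $P_{d_0}(m_0)$ to get the global bound $\rho\le d_0/\delta$, use this to control the focal parameter $d\le 2d_0/\delta$ at every contact point, and then estimate $\rho_d(y)-\rho_d(x)$ using $1-x\cdot m,\,1-y\cdot m\ge\delta$. Your Lipschitz constant $2b/\delta^2=2d_0/\delta^3$ coincides with the paper's, and the strict convexity argument via the tangent plane to the supporting paraboloid is identical to what the paper invokes by reference to Proposition~\ref{lipschitz}.
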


\begin{proof}
Strict convexity follows as in Proposition \ref{lipschitz}.
 Fix $x_0 \in \Bar{\Omega}$ and $m_0 \in \mathcal N_\sigma(x_0)$, then there exist a paraboloid $P_{d_0}(m_0)$ supporting $\sigma$ at $\rho(x_0)x_0$. 
Since for every $x \in \Bar{\Omega}$, there exist $P_d(m)$ with $m \in \Bar{\Omega}^*$ supporting $\sigma$ at $\rho(x)x$, then by Proposition \ref{impparaine}
$\dfrac{d}{2} \leq \rho_d(x) = \rho(x) \leq \rho_{d_0}(x) \leq \dfrac{d_0}{\delta},$
concluding that $d \leq \dfrac{2d_0}{\delta}$.
Now take $y \in \Bar{\Omega}$:
\begin{align*}
\rho(y) - \rho (x) &= \rho(y)-\rho_d(x)\leq \rho_d(y)-\rho_d(x)=\dfrac{d}{(1-  x\cdot m )(1- y\cdot m )} (y-x)\cdot  m\\
&\leq \dfrac{d}{\delta^2} |x-y|  
\leq \dfrac{2d_0}{\delta^3} |x-y|,
\end{align*}
and therefore the Proposition follows.
\end{proof}

%

We have from \cite[Lemma 1.1]{Wang:antenna} the following.
\begin{prop}\label{parapropmeasu}
If $\sigma$ is a reflector from $\Bar{\Omega}$ to $\Bar{\Omega}^*$,
and $A$ and $B$ are disjoint subsets of $\Bar{\Omega}^*$, then $\tau_\sigma(A) \cap \tau_\sigma(B)$ has Lebesgue measure zero.
\end{prop}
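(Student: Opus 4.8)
The plan is to adapt the argument of Lemma \ref{lem:aleksandrovforreflectors} to the far field setting, where it becomes considerably simpler because the axis direction of a supporting paraboloid is exactly the direction of the reflected ray. Since $\sigma$ is concave (as noted after the definition of far field reflector), $\rho$ is locally Lipschitz on $\Bar{\Omega}$, and so the set $N$ of points of $\Bar{\Omega}$ at which $\sigma$ fails to be differentiable has surface measure zero. It therefore suffices to show the inclusion $\tau_\sigma(A)\cap\tau_\sigma(B)\subseteq N$.

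First I would record the key rigidity property of paraboloids. Fix $x_0\in\Bar{\Omega}\setminus N$. Then $\sigma$ has a unique outer unit normal $\nu_0$ at $\rho(x_0)\,x_0$, and any supporting paraboloid $P_d(m)$ at that point must share this normal. By \eqref{paranueq} the normal of $P_d(m)$ at $x_0$ is $\dfrac{x_0-m}{|x_0-m|}$, and by Snell's law \eqref{Snellrefl} together with $|x_0|=|m|=1$ one checks, using $|x_0-m|^2=2(1-x_0\cdot m)$, that the reflected direction $x_0-2\,(x_0\cdot\nu_0)\,\nu_0$ equals $m$; this is the reflecting property of a paraboloid recorded just after \eqref{paranueq}. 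Consequently $m=x_0-2\,(x_0\cdot\nu_0)\,\nu_0$ is determined by $\nu_0$ alone, so there is at most one admissible axis direction $m$ supporting $\sigma$ at a differentiability point.

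To finish, suppose $x_0\in\tau_\sigma(A)\cap\tau_\sigma(B)$. Then there are $m_1\in A$ and $m_2\in B$ with paraboloids $P_{d_1}(m_1)$ and $P_{d_2}(m_2)$ both supporting $\sigma$ at $\rho(x_0)\,x_0$; since $A\cap B=\emptyset$ we have $m_1\neq m_2$. If $x_0\notin N$, the previous paragraph forces $m_1=x_0-2\,(x_0\cdot\nu_0)\,\nu_0=m_2$, a contradiction. Hence $x_0\in N$, which gives $\tau_\sigma(A)\cap\tau_\sigma(B)\subseteq N$ and therefore $|\tau_\sigma(A)\cap\tau_\sigma(B)|=0$.

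I expect the only real content to be the verification that a paraboloid's normal determines its axis, i.e.\ the identity $x_0-2\,(x_0\cdot\nu_0)\,\nu_0=m$, which is exactly the focusing property of paraboloids and needs only the elementary computation above. The notable feature, in contrast with the near field Lemma \ref{lem:aleksandrovforreflectors}, is that no hypothesis that the target lie in a plane (or be countable) is required: in the far field the reflected ray direction \emph{is} the target point, so two distinct targets can never share a normal at a differentiability point, and the exceptional set collapses into $N$ itself.
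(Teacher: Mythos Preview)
Your argument is correct. The key observation---that at a differentiability point the unique normal forces the axis direction of any supporting paraboloid via $m=x_0-2(x_0\cdot\nu_0)\nu_0$, so two distinct directions cannot both support at such a point---is exactly the right one, and your verification using $|x_0-m|^2=2(1-x_0\cdot m)$ is clean.

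As for comparison with the paper: the paper does not actually prove this proposition but simply cites \cite[Lemma~1.1]{Wang:antenna}. Your self-contained proof is essentially the standard argument (and is in the spirit of Wang's), and you correctly highlight the simplification over the near field Lemma~\ref{lem:aleksandrovforreflectors}: because the reflected \emph{direction} itself is the target datum, the whole density-point machinery and the planarity/countability hypothesis on the target disappear, and the exceptional set is contained in the singular set $N$ outright.
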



As a consequence of Proposition \ref{paralipschitz} and the proof of Proposition \ref{convprop} we obtain the following.
\begin{prop}\label{paraconvprop}
Suppose $\Bar{\Omega} \cdot \Bar{\Omega}^* \leq 1- \delta$.
Let $\sigma \in \mathcal A(a)$ and $N$ is the set of singular points of $\sigma$.
Suppose $\{x_n\}_{n=1}^\infty,x_0$ are in $\Bar{\Omega} \setminus N$ and $x_n\to x_0$.
If $P_{d_n}(m_n)$ and $P_{d_0}(m_0)$ are the corresponding supporting paraboloids to $\sigma$ at $x_n$ and $x_0$, and $\nu(x_n),\nu(x_0)$ are the corresponding unit normal vectors, then we have
\begin{enumerate}
\item $\lim_{n \to \infty} d_{n}= d_0$
\item $\lim_{n \to \infty} m_{n}= m_0$
\item $\lim_{n \to \infty} \nu(x_n)=\nu(x_0)$
\end{enumerate}
\end{prop}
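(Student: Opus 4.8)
The plan is to mirror the subsequence argument used for Proposition \ref{convprop}: it suffices to show that every subsequence $\{n_k\}$ admits a sub-subsequence $\{n_{k_l}\}$ along which (1)--(3) hold, since then the only possible limits are $d_0$, $m_0$, $\nu(x_0)$ and the whole sequence converges. First I would use compactness of $\bar{\Omega}^*$ to extract a subsequence with $m_{n_{k_l}}\to m_0'\in\bar{\Omega}^*$.

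The key simplification over the ellipsoid case is that for a paraboloid the focal parameter is recovered directly from the contact condition. Since $P_{d_n}(m_n)$ supports $\sigma$ at $\rho(x_n)x_n$, equation \eqref{def:paraofpolarradius} gives $\rho(x_n)=\rho_{d_n}(x_n)=d_n/(1-x_n\cdot m_n)$, hence $d_n=\rho(x_n)\,(1-x_n\cdot m_n)$. By continuity of $\rho$ (Proposition \ref{paralipschitz}) and the nondegeneracy hypothesis $1-x\cdot m\ge\delta>0$, I can pass to the limit to obtain $d_{n_{k_l}}\to d_0':=\rho(x_0)\,(1-x_0\cdot m_0')$, with $d_0'\ge 2a$ since each $d_n\ge 2a$. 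I would then show that the limiting paraboloid $P_{d_0'}(m_0')$ supports $\sigma$ at $x_0$: passing to the limit in the support inequality $\rho(x)\le \rho_{d_{n_{k_l}}}(x)$ (with equality at $x_{n_{k_l}}$) is legitimate because the denominators $1-x\cdot m$ stay bounded below by $\delta$ on $\bar{\Omega}\times\bar{\Omega}^*$, so $\rho_{d_{n_{k_l}}}\to\rho_{d_0'}$ uniformly on $\bar{\Omega}$ and the equality point passes to the limit by continuity. Thus $x_0\in\tau_\sigma(m_0')$, and by hypothesis also $x_0\in\tau_\sigma(m_0)$.

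The step I expect to be the crux is the identification $m_0'=m_0$, but here it is genuinely easier than in the near-field proof. Since $x_0\notin N$, the reflector has a unique normal $\nu(x_0)$ at $\rho(x_0)x_0$, and both supporting paraboloids must share this normal at the contact point. By Snell's law \eqref{Snellrefl}, the reflected direction at $x_0$ equals $x_0-2\,(x_0\cdot\nu(x_0))\,\nu(x_0)$; but for a paraboloid this reflected direction is precisely its axis (the remark following \eqref{paranueq}), so both $m_0'$ and $m_0$ coincide with this single vector, giving $m_0'=m_0$. Because $\rho_d(x_0)=d/(1-x_0\cdot m_0)$ is strictly increasing in $d$ and $\rho_{d_0'}(x_0)=\rho(x_0)=\rho_{d_0}(x_0)$, I then conclude $d_0'=d_0$.

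Finally, the condition $x_0\cdot m_0\le 1-\delta<1$ forces $x_0\neq m_0$, so $|x_0-m_0|>0$ and formula \eqref{paranueq} gives $\nu(x_{n_{k_l}})=(x_{n_{k_l}}-m_{n_{k_l}})/|x_{n_{k_l}}-m_{n_{k_l}}|\to (x_0-m_0)/|x_0-m_0|=\nu(x_0)$, establishing (1)--(3) along the sub-subsequence and hence the proposition. The only real care needed throughout is to invoke the nondegeneracy condition $\bar{\Omega}\cdot\bar{\Omega}^*\le 1-\delta$ to keep every denominator away from zero; the identification of the axis, which in the near-field proof required the auxiliary measure-zero set $S$ of Lemma \ref{lem:aleksandrovforreflectors}, is immediate in this setting because the reflected direction and the paraboloid's axis are one and the same.
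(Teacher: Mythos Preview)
Your proposal is correct and follows the same subsequence scheme the paper intends (the paper simply cites ``the proof of Proposition~\ref{convprop}'' together with Proposition~\ref{paralipschitz}). Your observation that the identification $m_0'=m_0$ needs only $x_0\notin N$---because for a paraboloid the axis coincides with the reflected direction, which is determined by the unique normal---is exactly the reason the hypothesis here is $\bar\Omega\setminus N$ rather than $\bar\Omega\setminus S$ as in the near-field Proposition~\ref{convprop}.
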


Using Propositions \ref{parapropmeasu}, \ref{paraconvprop}, and proceeding as Section \ref{sec:reflectorsandreflectormeasures}, we get the following results.

\begin{prop}\label{paramumeasure}
Let $f\in L^1(\Bar{\Omega})$ be non negative, $\Bar{\Omega} \cdot \Bar{\Omega}^* \leq 1- \delta$, and let $\sigma=\{\rho(x)x\}_{x\in \bar \Omega}$ be a reflector in $\mathcal A(a)$ for some $a>0$.
We define
\[
\mu(E)=\int_{\tau_{\sigma}(E)} f(x) \dfrac{ x\cdot \nu(x)}{\rho ^2(x)}\,dx
\] for each Borel set $E\subseteq \Bar{\Omega}^*$.
Then $\mu$ is a finite Borel measure on $\Bar{\Omega}^*$.
\end{prop}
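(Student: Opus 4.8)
The plan is to transcribe the proof of Proposition \ref{mumeasure} into the far-field setting, replacing ellipsoids by paraboloids and invoking the paraboloid analogues proved above. Four things must be verified: that $\tau_\sigma(E)$ is Lebesgue measurable for every Borel $E$, that the integrand $\dfrac{x\cdot \nu(x)}{\rho^2(x)}$ is continuous and positive off a null set, that the total mass is finite, and that $\mu$ is countably additive.

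First I would set up the far-field sigma-algebra $\mathcal{S}_\sigma = \{E \subseteq \bar{\Omega}^* : \tau_\sigma(E) \text{ is Lebesgue measurable}\}$ exactly as in Section \ref{sec:reflectorsandreflectormeasures}. The analogue of Lemma \ref{nonempty} gives that $\tau_\sigma(E)$ is closed whenever $E$ is closed: if $x_n \in \tau_\sigma(E)$ and $x_n \to x_0$, then by compactness of $\bar{\Omega}^*$ and the convergence argument of Proposition \ref{paraconvprop} the supporting paraboloids $P_{d_n}(m_n)$ have a subsequence converging to a paraboloid $P_{d_0}(m_0)$ supporting $\sigma$ at $\rho(x_0)x_0$ with $m_0 \in E$, so $x_0 \in \tau_\sigma(E)$. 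Closure of $\mathcal{S}_\sigma$ under countable unions is immediate from $\tau_\sigma(\cup_i F_i) = \cup_i \tau_\sigma(F_i)$, and closure under complements follows verbatim as in Proposition \ref{propsigma}, the measure-zero overlap now being controlled by Proposition \ref{parapropmeasu}. Hence $\mathcal{S}_\sigma$ is a sigma-algebra containing all closed, and therefore all Borel, sets.

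Next I would check the integrand. Let $N$ be the singular set of $\sigma$; since $\rho$ is Lipschitz by Proposition \ref{paralipschitz}, hence differentiable a.e., $|N| = 0$. By Proposition \ref{parapropmeasu} together with a standard covering argument, the set $S$ of directions admitting two distinct supporting paraboloids also has measure zero. On $\bar{\Omega} \setminus (N \cup S)$ the normal is single-valued and continuous by Proposition \ref{paraconvprop}, and from (\ref{paranueq})
\[
x\cdot \nu(x) = \frac{1 - x\cdot m}{|x - m|} \geq \frac{\delta}{2} > 0,
\]
using $x\cdot m \leq 1 - \delta$ and $|x - m| \leq 2$. Since $\sigma \in \mathcal{A}(a)$, Proposition \ref{impparaine} gives $\rho(x) \geq a > 0$, so $\dfrac{x\cdot \nu(x)}{\rho^2(x)}$ is continuous and strictly positive a.e.; thus $\mu(E)$ is a well-defined non-negative number. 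Finiteness is then immediate from $x\cdot \nu(x) \leq 1$ and $\rho \geq a$, which give $\mu(\bar{\Omega}^*) \leq a^{-2}\|f\|_{L^1(\bar{\Omega})} < \infty$.

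For countable additivity I would take disjoint Borel sets $E_1, E_2, \dots$; Proposition \ref{parapropmeasu} gives $|\tau_\sigma(E_i) \cap \tau_\sigma(E_j)| = 0$ for $i \neq j$, so $\mu(E_i \cap E_j) = 0$, and combined with $\tau_\sigma(\cup_i E_i) = \cup_i \tau_\sigma(E_i)$ this yields $\mu(\cup_i E_i) = \sum_i \mu(E_i)$. The only step carrying real content is the measurability argument of the second paragraph, but here it is actually easier than in the near field: the obstruction behind Lemma \ref{lem:aleksandrovforreflectors}, which forced the hypothesis that the target lie in a plane or be countable, disappears because Proposition \ref{parapropmeasu} (from \cite{Wang:antenna}) holds for an arbitrary $\bar{\Omega}^* \subseteq S^2$. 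Consequently no geometric restriction on the target is needed and the proof is a direct adaptation of the near-field one.
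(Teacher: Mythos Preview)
Your proposal is correct and follows exactly the paper's approach, which simply says to repeat the Section~\ref{sec:reflectorsandreflectormeasures} arguments with Propositions~\ref{parapropmeasu} and~\ref{paraconvprop} in place of their near-field analogues. One minor sharpening: you need not obtain $|S|=0$ by a covering argument, since in the far field a unique normal at a differentiability point determines, via Snell's law, a unique reflected direction and hence a unique paraboloid axis $m$, so $S\subseteq N$ automatically (which is why Proposition~\ref{paraconvprop} is stated relative to $\bar\Omega\setminus N$ rather than $\bar\Omega\setminus S$).
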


\begin{prop} \label{paraweaklem}
Suppose $\Bar{\Omega} \cdot \Bar{\Omega}^* \leq 1- \delta$.
Let $\sigma_n$ be a sequence of reflectors in $\mathcal A(a)$ for some fixed $a>0$, where $\sigma_n=\{ \rho_n (x)x \}$ such that $\rho_n(x)\leq b$ for all $x \in \Bar{\Omega}$ and $\rho_n$ converges point-wise to $\rho$ in $\Bar{\Omega}$.
Let $\sigma=\{\rho(x)x\}$. Then we have:
\begin{enumerate}
\item $\sigma\in \mathcal A(a)$.
\item If $\mu$ is the reflector measure corresponding to $\sigma$, then $\mu_n$ converges weakly to $\mu$ .
\end{enumerate}
\end{prop}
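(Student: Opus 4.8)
The plan is to follow the near-field argument of Proposition \ref{weaklem} essentially verbatim, substituting paraboloids for ellipsoids and invoking the far-field analogues of the auxiliary results: Propositions \ref{impparaine}, \ref{parapropmeasu}, and \ref{paraconvprop}.

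For part (1), I would fix $x_0\in\Bar{\Omega}$ and use $\sigma_n\in\mathcal A(a)$ to obtain a supporting paraboloid $P_{d_n}(m_n)$ to $\sigma_n$ at $\rho_n(x_0)x_0$, with $m_n\in\Bar{\Omega}^*$ and $d_n\ge 2a$. The hypothesis $\rho_n(x_0)\le b$ together with Proposition \ref{impparaine} gives $b\ge\rho_{d_n}(x_0)\ge d_n/2$, so $2a\le d_n\le 2b$. Since $\Bar{\Omega}^*$ is compact, I extract subsequences $m_{n_k}\to m_0\in\Bar{\Omega}^*$ and $d_{n_k}\to d_0\in[2a,2b]$. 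Because $x\cdot m\le 1-\delta$ on $\Bar{\Omega}\times\Bar{\Omega}^*$, the denominators $1-x\cdot m_{n_k}$ stay bounded away from zero, whence $\rho_{d_{n_k}}(x)\to\rho_{d_0}(x)$ pointwise. Letting $k\to\infty$ in $\rho_{n_k}(x)\le\rho_{d_{n_k}}(x)$ (with equality at $x_0$) and using $\rho_{n_k}\to\rho$ pointwise gives $\rho(x)\le\rho_{d_0}(x)$ with equality at $x_0$; thus $P_{d_0}(m_0)$ supports $\sigma$ at $\rho(x_0)x_0$ with $d_0\ge 2a$, so $\sigma\in\mathcal A(a)$.

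For part (2), I would assemble an exceptional set $H$ of surface measure zero from the non-differentiability sets of the concave functions $\rho_n,\rho$ (null by Proposition \ref{paralipschitz}) together with the sets where the reflector mapping is multi-valued (null by Proposition \ref{parapropmeasu}). On $\Bar{\Omega}\setminus H$ both $\mathcal N_{\sigma_n}$ and $\mathcal N_\sigma$ are single-valued, with normals continuous by Proposition \ref{paraconvprop}. For a test function $h\in C(\Bar{\Omega}^*)$ I would write $\int h\,d\mu_n=\int_{\Bar{\Omega}\setminus H}h(\mathcal N_{\sigma_n}(x))\,f(x)\,\frac{x\cdot\nu_n(x)}{\rho_n^2(x)}\,dx$; since $\rho_n(x)\ge a$ (because $\rho_n=\rho_{d_n}\ge d_n/2\ge a$) and $x\cdot\nu_n(x)\le 1$, the integrand is dominated by $\|h\|_\infty\,f(x)/a^2\in L^1(\Bar{\Omega})$, so dominated convergence will apply once the integrand is shown to converge pointwise.

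The pointwise convergence is the crux. Writing $\mathcal N_{\sigma_n}(x)=\{m_n\}$ and $\mathcal N_\sigma(x)=\{m_0\}$ on $\Bar{\Omega}\setminus H$, I would show $m_n\to m_0$ by the subsequence principle: every subsequence of $\{m_n\}$ has, by the argument of part (1), a further subsequence converging to some $\bar m\in\mathcal N_\sigma(x)=\{m_0\}$, forcing $\bar m=m_0$, so the full sequence converges. Then $h(m_n)\to h(m_0)$ by continuity, $\nu_n(x)=\frac{x-m_n}{|x-m_n|}\to\frac{x-m_0}{|x-m_0|}=\nu(x)$ by \eqref{paranueq} (the denominator being nonzero since $x\cdot m_0\le 1-\delta<1$), and $\rho_n(x)\to\rho(x)$ by hypothesis. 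Hence the integrand converges pointwise to $h(\mathcal N_\sigma(x))\,f(x)\,\frac{x\cdot\nu(x)}{\rho^2(x)}$, and dominated convergence yields $\int h\,d\mu_n\to\int h\,d\mu$ for every $h\in C(\Bar{\Omega}^*)$, i.e.\ $\mu_n\to\mu$ weakly. I expect no serious obstacle: the only points requiring care, both routine, are keeping the paraboloid denominators bounded away from zero via $\Bar{\Omega}\cdot\Bar{\Omega}^*\le 1-\delta$ (needed both for the convergence of the $\rho_{d}$'s and for the domination) and confining all multi-valuedness to the null set $H$.
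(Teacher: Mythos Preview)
Your proposal is correct and is precisely the approach the paper intends: the paper does not give a separate proof of Proposition \ref{paraweaklem} but states it as a direct consequence of Propositions \ref{parapropmeasu} and \ref{paraconvprop} together with the near-field argument of Proposition \ref{weaklem}, which is exactly what you carry out. The only cosmetic difference is that in the near-field proof the paper takes $H$ to be the union of the sets $S_n,S$ where the reflector map is multi-valued (the singular set being contained in these by construction), whereas you explicitly throw in the non-differentiability sets as well; either way $|H|=0$ and the argument proceeds identically.
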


\begin{cor}\label{paracontinui}
If $\Omega^*=\{m_1,m_2,...,m_N\}$ and $\sigma_n,\sigma,\mu_n,\mu$ are as in the above proposition, then
$$\lim_{n \to +\infty} \mu_n(m_i)=\mu(m_i).$$
\end{cor}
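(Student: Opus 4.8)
The plan is to imitate the proof of Corollary \ref{continui}, exploiting the fact that $\Omega^*=\{m_1,\dots,m_N\}$ is a finite set and therefore carries the discrete topology inherited from $S^2$. First I would fix an index $i$ with $1\leq i\leq N$ and define the function $h_i\colon \Omega^*\to \R$ by $h_i(m_j)=\delta_i^j$, the Kronecker delta; in other words, $h_i$ is the indicator of the singleton $\{m_i\}$. Since every point of a finite set is isolated, each $h_i$ is automatically continuous, so $h_i\in C(\Omega^*)$, and thus it is an admissible test function for the notion of weak convergence supplied by the previous proposition.

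Next I would invoke part (2) of Proposition \ref{paraweaklem}, which asserts that the reflector measures $\mu_n$ converge weakly to $\mu$. Testing this weak convergence against the continuous function $h_i$ gives
\[
\lim_{n\to+\infty}\int_{\Omega^*} h_i\,d\mu_n=\int_{\Omega^*} h_i\,d\mu.
\]
Finally, because $h_i$ is the indicator of $\{m_i\}$, both integrals reduce to the corresponding point masses, namely $\int_{\Omega^*} h_i\,d\mu_n=\mu_n(m_i)$ and $\int_{\Omega^*} h_i\,d\mu=\mu(m_i)$, which yields the claimed limit $\lim_{n\to+\infty}\mu_n(m_i)=\mu(m_i)$.

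There is essentially no serious obstacle here; the only thing to check is that the hypotheses of Proposition \ref{paraweaklem} are in force, i.e., that $\sigma_n\in \mathcal A(a)$, that there is a uniform upper bound $\rho_n(x)\leq b$, that $\rho_n\to\rho$ pointwise on $\Bar{\Omega}$, and that $\Bar{\Omega}\cdot\Bar{\Omega}^*\leq 1-\delta$ — all of which are guaranteed by the phrase ``as in the above proposition.'' The one structural idea being used is that weak convergence of Borel measures tests only against continuous functions, but on a discrete target the indicator of a single point \emph{is} continuous, so weak convergence upgrades automatically to convergence of the individual point masses. This is exactly the far field counterpart of the argument already carried out in Corollary \ref{continui} for the near field discrete target $D$, with paraboloids $P_d(m)$ replacing the ellipsoids $E_d(P)$.
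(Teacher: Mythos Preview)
Your proposal is correct and follows exactly the same approach as the paper: define the indicator $h_i(m_j)=\delta_i^j$, observe it is continuous on the finite set $\Omega^*$, and apply the weak convergence from Proposition \ref{paraweaklem} to conclude. This is precisely the far field analogue of the argument given for Corollary \ref{continui}.
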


\subsection{Solution of the problem}

\begin{defi}\label{FarReflector}
Let $0<\delta <1$, and let $\Omega \subseteq S^2$, and $\Omega ^*=\{m_1,m_2,...,m_N\}$ be  such that $\Bar{\Omega} \cdot \Bar{\Omega}^* \leq 1-\delta$.
Let $d_1,\cdots ,d_N$ be positive numbers and $w=(d_1,d_2,\cdots ,d_n)$.
We define the reflector $\sigma=\{\rho_w(x)x\}_{x\in \Bar{\Omega}}$ by
\[
\rho_w(x)=\min_{1\leq i \leq N} \rho_{d_i}(x),
\]
with $\rho_{d_i}(x)=\dfrac{d_i}{1- x \cdot m_i}$.
\end{defi}

\begin{lem}\label{parablwbdd}
Let $0<a '$, and let $\{\rho_w(x)x\}$ be the reflector with $w=(d_1,\cdots ,d_N)$, where $d_1\leq a'$.
If $f\in L^1(\Bar{\Omega})$ and $f>0$ a.e.,
then
\begin{equation}\label{paracond}
\mu_w(D)=\int_{\Bar{\Omega}}f(x)\dfrac{x\cdot \nu_w(x)}{\rho_w ^2 (x)}dx > C(a',\delta)
\int_{\Bar{\Omega}} f(x)dx
\end{equation}
where $C(a',\delta)$ is a constant depending only on $a'$ and $\delta$.
\end{lem}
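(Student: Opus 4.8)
The plan is to follow the proof of the near-field Lemma \ref{blwbdd} almost verbatim, taking advantage of the fact that the standing hypothesis $\bar{\Omega}\cdot\bar{\Omega}^*\le 1-\delta$ does double duty: it both bounds the polar radii from above (through Proposition \ref{impparaine}) and keeps the inner products $x\cdot m_i$ bounded away from $1$. First I would bound $\rho_w$ from above: since $\rho_w(x)=\min_{1\le i\le N}\rho_{d_i}(x)\le\rho_{d_1}(x)$ and $d_1\le a'$, Proposition \ref{impparaine} gives
\[
\rho_w(x)\le\rho_{d_1}(x)\le\frac{d_1}{\delta}\le\frac{a'}{\delta}\qquad\text{for all }x\in\bar{\Omega}.
\]

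Next I would produce the pointwise lower bound for the integrand. By Proposition \ref{paralipschitz}, $\rho_w$ is Lipschitz, hence differentiable off a set of measure zero. At a non-singular $x$ the minimum defining $\rho_w$ is attained at some index $i$, so $\rho_w(x)=\rho_{d_i}(x)$ and, the two graphs touching from below with $\rho_w$ differentiable there, the unique normal is $\nu_w(x)=\nu_{d_i}(x)=(x-m_i)/|x-m_i|$ by \eqref{paranueq}. Using $|x-m_i|^2=2(1-x\cdot m_i)$ and $x\cdot m_i\le 1-\delta$ I obtain
\[
x\cdot\nu_w(x)=\frac{1-x\cdot m_i}{|x-m_i|}=\sqrt{\frac{1-x\cdot m_i}{2}}\ge\sqrt{\frac{\delta}{2}}.
\]
Combining this with the upper bound on $\rho_w$ yields, for a.e. $x$,
\[
\frac{x\cdot\nu_w(x)}{\rho_w^2(x)}\ge\frac{\sqrt{\delta/2}}{(a'/\delta)^2}=:C(a',\delta)>0,
\]
and integrating against $f$ gives $\mu_w(D)\ge C(a',\delta)\int_{\bar{\Omega}}f\,dx$.

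The main obstacle is the strict inequality, which I would prove by contradiction exactly as in Lemma \ref{blwbdd}. Suppose equality holds; since $f>0$ a.e., the integrand must equal $C(a',\delta)$ a.e., so $\rho_w^2(x)=x\cdot\nu_w(x)/C(a',\delta)\ge\sqrt{\delta/2}\,/\,C(a',\delta)=(a'/\delta)^2$ a.e. Together with the upper bound $\rho_w\le a'/\delta$ this forces $\rho_w$ to equal the constant $a'/\delta$ a.e., hence everywhere by continuity. To reach a contradiction I would use Proposition \ref{parapropmeasu}: the set of directions reflected to two distinct $m_i$'s, namely $\bigcup_{i\ne j}\tau_\sigma(m_i)\cap\tau_\sigma(m_j)$, has measure zero, so at a.e. non-singular $x_0$ the minimizing index $j$ is unique, and by continuity $\rho_w=\rho_{d_j}$ on a whole neighborhood $V_{x_0}$. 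But then $\rho_{d_j}(x)=d_j/(1-x\cdot m_j)$ would be constant on $V_{x_0}$, forcing $x\cdot m_j$ to be constant on an open subset of $S^2$, which is impossible since its level sets are circles. This contradiction establishes the strict inequality and completes the proof. The delicate point, as in the near-field case, is precisely this last step: one must localize to a neighborhood on which $\rho_w$ coincides with a single $\rho_{d_j}$, which requires ruling out both singular points and the measure-zero set of doubly-covered directions before invoking the non-constancy of a genuine paraboloid's polar radius.
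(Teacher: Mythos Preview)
Your proof is correct and follows the paper's approach essentially line by line: bound $\rho_w\le a'/\delta$ via $\rho_{d_1}$, bound $x\cdot\nu_w$ from below at non-singular points, integrate, and for strictness argue by contradiction that $\rho_w$ would be constant on a neighborhood where it equals a single $\rho_{d_j}$. The one difference is that you compute $x\cdot\nu_w=\sqrt{(1-x\cdot m_i)/2}\ge\sqrt{\delta/2}$ exactly, whereas the paper uses the cruder estimate $|x-m_i|\le 2$ to get $x\cdot\nu_w\ge\delta/2$; this gives you the sharper constant $C(a',\delta)=\delta^{5/2}/(\sqrt{2}\,a'^2)$ in place of the paper's $\delta^3/(2a'^2)$, which is harmless for the lemma as stated but worth noting since the paper later fixes $C(a',\delta)=\delta^3/(2a'^2)$ explicitly in Theorems~\ref{paradiscretethmgeq} and~\ref{thm:parasolutionforgeneralmeasure}.
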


\begin{proof}
From Proposition \ref{impparaine}, $\rho_{d_1}(x) \leq \dfrac{d_1}{\delta} \leq \dfrac{a'}{\delta}$,
and so by the Definition \ref{FarReflector} $\rho_w(x)\leq \dfrac{a'}{\delta}$.
From Proposition \ref{paralipschitz}, the set of singular points of the reflector $\rho_w$ has measure zero.
So for each $x \in \Bar{\Omega}$ not a singular point, there exists $1\leq i \leq N$ such that
\begin{equation*}
 x\cdot\nu_w(x) = x\cdot \nu_{d_i}(x)
= \dfrac{1-  x\cdot m_i}{|x- m_i|}\geq \dfrac{\delta}{2},\qquad
\text{by \eqref{paranueq}.}
\end{equation*}
Therefore $\dfrac{ x\cdot \nu_w(x)}{\rho_w ^2 (x)}\geq \dfrac{\delta^3}{2a'^2}:=C(a ',\delta)$ for a.e. $x$, and so
$$\mu_w(D)=\int_{\Bar{\Omega}}f(x)\dfrac{x\cdot \nu_w(x)}{\rho_w ^2 (x)}dx \geq C(a ',\delta)\int_{\Bar{\Omega}} f(x)dx.$$
To prove that the inequality is strict we proceed as in Lemma \ref{blwbdd}.
\end{proof}

The statements of Lemma \ref{compare} and Corollary \ref{corcomp} hold true in the far field case and we therefore obtain the solution in the discrete case.
%
%
%
%
%

\begin{theorem}\label{paradiscretethmgeq}
Let $\Omega\subseteq S^2,0 < \delta <1$,
and $f\in L^1(\bar{\Omega})$ such that $f>0$ a.e.
Suppose $g_1,g_2,\cdots ,g_N$ are positive numbers with $N > 1$, and
$\Omega^*=\{m_1,m_2,....,m_N\}\subseteq S^2$  be such that $\Bar{\Omega} \cdot \Bar{\Omega}^* \leq 1-\delta$.
Define the measure $\eta$ on $\Bar{\Omega}^*$ by $\eta=\sum_{i=1}^{N} g_i \delta_{P_i}$.
Fix $a > 0$ and $a' \geq \dfrac{4a}{\delta}$, and suppose that
\begin{equation}\label{paraprobcond}
\int_{\Bar{\Omega}}f(x)dx \geq \dfrac{1}{C(a',\delta)} \eta(\Omega^*),
\end{equation}
where $C(a',\delta)=\dfrac{\delta^3}{2a'^2}$.

Then there exists a reflector $\Bar{w}=(a',\bar {d_2},\cdots ,\Bar{d_{N}})$ with $\Bar d_{i}\geq 2a$ for $1 \leq i \leq N$ satisfying
\begin{enumerate}
\item $\Bar{\Omega}=\bigcup_{i=1}^{N} \tau_{\Bar{\sigma}}(m_{i})$.
\item $ \Bar{\mu}(P_i)= g_i$ for $ 2 \leq i \leq N$, where $\Bar{\mu}$ is the reflector measure corresponding to $\Bar{w}$; and
\item $\Bar{\mu}(P_1) > g_1$.
\end{enumerate}

\end{theorem}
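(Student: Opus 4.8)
The plan is to replicate verbatim the variational argument of Theorem \ref{discretethmgeq}, now with paraboloids in the role of ellipsoids, relying on the stated facts that Lemma \ref{compare} and Corollary \ref{corcomp} carry over to the far field. First I would introduce the feasible set
\[
W=\{w=(d_1,\dots,d_N): d_1=a',\ d_i\geq 2a\ \text{for all }i,\ \mu_w(m_i)\leq g_i\ \text{for}\ 2\leq i\leq N\},
\]
where $\mu_w$ is the reflector measure attached to $\sigma_w$ from Definition \ref{FarReflector}. To see $W\neq\emptyset$, take $d_i$ large for $i\geq 2$: by Proposition \ref{impparaine} one has $\rho_{d_i}(x)\geq d_i/2$ and $\rho_{a'}(x)\leq a'/\delta$, so once $d_i\geq 2a'/\delta$ the paraboloid $P_{d_i}(m_i)$ lies above $P_{a'}(m_1)$ on all of $\bar{\Omega}$; then $\rho_w\equiv\rho_{a'}$ and $\mu_w(m_i)=0<g_i$. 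Closedness of $W$ follows from Corollary \ref{paracontinui}: along $w_n\to w$ the uniform bound $\rho_{w_n}\leq a'/\delta$ keeps the limit in $\mathcal A(a)$ and yields $\mu_{w_n}(m_i)\to\mu_w(m_i)$.

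Next I would check that every $w\in W$ overshoots at $m_1$. Writing $\mu_w(m_1)=\mu_w(\Omega^*)-\sum_{i\geq 2}\mu_w(m_i)\geq\mu_w(\Omega^*)-\sum_{i\geq 2}g_i$ and combining the strict lower bound of Lemma \ref{parablwbdd} with hypothesis \eqref{paraprobcond} gives
\[
\mu_w(m_1)-g_1\geq \mu_w(\Omega^*)-\eta(\Omega^*)>C(a',\delta)\int_{\bar{\Omega}}f\,dx-\eta(\Omega^*)\geq 0 .
\]
Then I set $\bar d_1=a'$, $\bar d_i=\inf_{w\in W}d_i$ for $i\geq 2$, and $\bar w=(\bar d_1,\dots,\bar d_N)$. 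Since $W$ is closed and the coordinates are bounded below by $2a$, each infimum is attained at some $\bar w_i\in W$; as $\bar w=\min_i\bar w_i$ coordinatewise, the far-field Corollary \ref{corcomp} gives $\bar\mu(m_i)\leq\max_i\mu_{\bar w_i}(m_i)\leq g_i$, so $\bar w\in W$. This already secures conclusions (1) and (3), leaving only the equalities in (2).

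For (2) I argue by contradiction, supposing without loss that $\bar\mu(m_2)<g_2$. The decisive step is to show $\bar d_2>2a$: if instead $\bar d_2=2a$, then Proposition \ref{impparaine} yields $\rho_{\bar d_1}\geq a'/2\geq 2a/\delta\geq\rho_{2a}=\rho_{\bar d_2}$ on $\bar{\Omega}$, where the hypothesis $a'\geq 4a/\delta$ is exactly what is required, so $\tau_{\bar\sigma}(m_1)\subseteq\tau_{\bar\sigma}(m_2)$ and Proposition \ref{parapropmeasu} forces $|\tau_{\bar\sigma}(m_1)|=0$, whence $\bar\mu(m_1)=0$, contradicting (3). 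Having $\bar d_2>2a$, I would replace $\bar d_2$ by $\lambda\bar d_2$ with $\lambda<1$ near $1$; the far-field Lemma \ref{compare} keeps the masses at $m_i$, $i\geq 3$, from increasing, while Corollary \ref{paracontinui} gives $\mu_{w_\lambda}(m_2)\to\bar\mu(m_2)<g_2$, so $w_\lambda\in W$ for $\lambda$ close to $1$, contradicting the minimality of $\bar d_2$.

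The main obstacle is essentially bookkeeping rather than a new idea: one must confirm that the strict monotonicity of $\rho_d$ in $d$ for paraboloids (noted after \eqref{def:paraofpolarradius}) and the two-sided bounds of Proposition \ref{impparaine} are exact substitutes for Proposition \ref{propmonot} and Proposition \ref{propineq}, so that the comparison results transfer cleanly, and that the threshold $a'\geq 4a/\delta$ plays precisely the role of $k\geq(1+c_\delta)/(1-c_\delta)$ in the near-field proof. Once these correspondences are in place, the argument above runs identically.
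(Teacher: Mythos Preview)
Your proposal is correct and follows exactly the approach the paper intends: the paper does not write out a separate proof of Theorem \ref{paradiscretethmgeq} but simply notes that Lemma \ref{compare} and Corollary \ref{corcomp} transfer to the far-field setting, so the proof of Theorem \ref{discretethmgeq} carries over verbatim. You have correctly identified all the substitutions---Proposition \ref{impparaine} for Proposition \ref{propineq}, the monotonicity after \eqref{def:paraofpolarradius} for Proposition \ref{propmonot}, and the threshold $a'\geq 4a/\delta$ for $k\geq (1+c_\delta)/(1-c_\delta)$---and the arithmetic checks out at each step.
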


For the general case we have the following.
\begin{theorem}\label{thm:parasolutionforgeneralmeasure}
Let $\Omega,\Omega^*\subseteq S^2$, such that $\Bar{\Omega} \cdot \Bar{\Omega}^* \leq 1-\delta$ for some $0<\delta<1$.
Let $f\in L^1(\Bar{\Omega})$ be such that $f>0$ a.e, and let $\eta$ be a Radon measure on $\Bar{\Omega}^*$.
Given $a>0$, and $a' \geq \dfrac{4a}{\delta}$ we assume that
\begin{equation}\label{paraprobcondgeneral}
\int_{\bar \Omega}f(x)dx \geq \dfrac{1}{C(a',\delta)} \eta(\Bar{\Omega}^*),
\end{equation}
where $C(a',\delta)=\dfrac{\delta^3}{2a'^2}$.

Then given $m_0\in \supp (\eta)$, the support of the measure $\eta$,
there exists a reflector $\sigma=\{\rho(x)x\}_{x\in \Bar{\Omega}}$ from $\Bar{\Omega}$ to $\Bar{\Omega}^*$ such that
\[
\eta(E)\leq \int_{\tau_\sigma(E)}f(x)\,\dfrac{x\cdot \nu_{\rho}(x)}{\rho^2 (x)}dx,
\]
for each Borel set $E\subseteq \Bar{\Omega}^*$, and
\[
\eta(E)= \int_{\tau_\sigma(E)}f(x)\,\dfrac{x\cdot \nu_{\rho}(x)}{\rho^2 (x)}dx,
\]
for each Borel set $E\subseteq \Bar{\Omega}^*$ with $m_0\notin E$.
In addition, the reflector $\sigma=\{\rho(x)x\}$ belongs to $\mathcal A(a)$, and
$a \leq \rho(x)\leq \dfrac{a'}{\delta}$ for all $x$.
\end{theorem}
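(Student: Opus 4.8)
The plan is to mirror the proof of Theorem \ref{thm:solutionforgeneralmeasure}, replacing ellipsoids by paraboloids and invoking the far field analogues of the relevant propositions, so that the whole argument becomes a discretize--solve--pass to the limit scheme built on the discrete existence result Theorem \ref{paradiscretethmgeq}. Since the paper has already established the far field versions of all the ingredients, the proof is essentially a transcription of the near field case.

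First I would discretize $\eta$. For each $\ell$ I partition $\Bar{\Omega}^*$ into finitely many disjoint Borel sets $D_i^\ell$ of diameter at most $\varepsilon/2^\ell$, arranged so that $m_0$ lies in the interior of $D_1^\ell$ and $D_1^{\ell+1}\subseteq D_1^\ell$, discarding the sets of zero $\eta$-measure (legitimate, since they do not affect $\eta$) and choosing $m_i^\ell\in D_i^\ell$ with $m_1^\ell=m_0$ as the overshooting direction. Setting $\eta_\ell=\sum_i \eta(D_i^\ell)\,\delta_{m_i^\ell}$, the total mass is preserved, $\eta_\ell(\Bar{\Omega}^*)=\eta(\Bar{\Omega}^*)$, so hypothesis \eqref{paraprobcondgeneral} lets me apply Theorem \ref{paradiscretethmgeq} to produce a discrete reflector $\sigma_\ell=\{\rho_\ell(x)x\}$ with $d_1^\ell=a'$ and $d_i^\ell\geq 2a$, whose reflector measure $\mu_\ell$ satisfies $\mu_\ell(E)=\eta_\ell(E)$ for every Borel $E$ with $m_0\notin E$ and $\mu_\ell(E)\geq \eta_\ell(E)$ in general; note that $\mu_\ell$ is supported on the finite set $\{m_i^\ell\}$ because $\sigma_\ell$ uses only paraboloids with those axes.

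Next I extract a limit. Since $d_1^\ell=a'$ is fixed, Proposition \ref{impparaine} gives the uniform two-sided bound $a\leq \rho_\ell(x)\leq a'/\delta$; in particular every supporting paraboloid satisfies $d\leq 2\rho_\ell(x)\leq 2a'/\delta$, so the estimate in the proof of Proposition \ref{paralipschitz} bounds the Lipschitz constant by $2a'/\delta^3$, uniformly in $\ell$. By Arzel\'a--Ascoli a subsequence $\rho_\ell$ converges uniformly to some $\rho$, and by Proposition \ref{paraweaklem} the surface $\sigma=\{\rho(x)x\}$ lies in $\mathcal A(a)$ with $\mu_\ell\to\mu$ weakly, $\mu$ being its reflector measure; the bound $a\leq \rho(x)\leq a'/\delta$ passes to the limit. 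I also check $\eta_\ell\to\eta$ weakly: for $h\in C(\Bar{\Omega}^*)$ the difference $\int h\,d\eta_\ell-\int h\,d\eta=\sum_i\int_{D_i^\ell}\bigl(h(m_i^\ell)-h(y)\bigr)\,d\eta(y)$ is controlled by the modulus of continuity of $h$ evaluated on the diameters $\varepsilon/2^\ell\to 0$.

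Finally I pass the two relations to the limit. For the equality, every $h\in C(\Bar{\Omega}^*)$ with $h(m_0)=0$ satisfies $\int h\,d\mu_\ell=\int h\,d\eta_\ell$, since the only discrepancy between $\mu_\ell$ and $\eta_\ell$ sits at $m_0$; hence $\int h\,d\mu=\int h\,d\eta$ in the limit, and the uniqueness part of the Riesz representation theorem on the locally compact space $\Bar{\Omega}^*\setminus\{m_0\}$ forces $\mu(E)=\eta(E)$ for all Borel $E$ with $m_0\notin E$. For the inequality, every nonnegative $h\in C(\Bar{\Omega}^*)$ obeys $\int h\,d\eta_\ell\leq\int h\,d\mu_\ell$ because $\eta_\ell(\{m_i^\ell\})\leq\mu_\ell(\{m_i^\ell\})$ for all $i$, so $\int h\,d\eta\leq\int h\,d\mu$, giving $\eta(E)\leq\mu(E)$ for every Borel $E$. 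The step that needs the most care is this last passage from weak convergence to the set-wise (in)equalities, which rests on the regularity of the limit measures rather than on weak convergence alone; all the remaining steps are routine once the far field propositions are in place.
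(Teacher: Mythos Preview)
Your proof is correct and follows essentially the same approach as the paper: discretize $\eta$, solve each discrete problem via Theorem \ref{paradiscretethmgeq}, obtain uniform $C^{0,1}$ bounds from $d_1^\ell=a'$ together with Propositions \ref{impparaine} and \ref{paralipschitz}, pass to a limit with Arzel\`a--Ascoli, and invoke Proposition \ref{paraweaklem} for weak stability. In fact your write-up is more careful than the paper's, which simply refers back to the near field argument; your explicit justification of the passage from weak convergence of $\mu_\ell,\eta_\ell$ to the setwise (in)equalities via Riesz on $\Bar{\Omega}^*\setminus\{m_0\}$ fills a gap that the paper glosses over.
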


\begin{proof}
Using the proof of Proposition \ref{paralipschitz}, we have that every reflector solution in Theorem \ref{paradiscretethmgeq} with $w=(a',d_2,\cdots,d_n)$ and $a'\geq \dfrac{4a}{\delta}, d_i \geq 2a$, satisfies
$$a\leq \rho(x)\leq \dfrac{a'}{\delta} \quad \text{ and }\quad |\rho(x)-\rho(y)|\leq \dfrac{2a'}{\delta^3}|x-y| \text{          for all $ x,y \in \Bar{\Omega}$}.$$ Hence Arzel\'a-Ascoli theorem can be applied and the proof follows as in Theorem \ref{thm:solutionforgeneralmeasure}.
\end{proof}

\subsection{Overshooting discussion}
As in the near field case, there exists a far field reflector $\sigma_0$ such that:
$$\mu_0(m_0)=\inf\left\{\int_{\tau_{\sigma}(m_0)}f(x)\dfrac{x\cdot \nu(x)}{\rho^2(x)}dx: \text{$\sigma$ a reflector as in Theorem \ref{thm:parasolutionforgeneralmeasure}}\right\} .$$
Moreover, for every open set $G$ containing $m_0$ we have
$\mu_0(G)>\eta(G)$ if $\mu_0(m_0)>0$, and
$\mu_0(G)=\eta(G)$ if $\mu_0(m_0)=0$.


\subsection{Derivation of the far field differential equation}\label{sec:FarFieldEquation}
We use the notation from Section \ref{sec:NearFieldEquation}.
Let $\sigma=\{\rho(X)X\}$ be a far field reflector from $\bar{\Omega}$ to $\bar{\Omega}^*$ with emitting radiance intensity $f(x)$, and $\eta= g \,dx$ with $g$ positive function in $L^1(\bar{\Omega}^*)$. 
Recall $\mathcal U=\{x:
(x,\sqrt{1-|x|^2})\in \bar{\Omega}\}$ and suppose $\rho$ is $C^2$. 
Keeping in mind 
\eqref{Snellrefl},
we now let $T$ be the map on $\mathcal U$ defined by $x\to Y$.
%
If $DT$ is the Jacobian of $T$, then as in the near field case we can deduce that
\begin{equation}\label{parasimplequ}
|\det( DT)| \leq  \dfrac{f(x)\,(X\cdot \nu(X))}{\sqrt{1-|x|^2}\, \rho^2 (x) \, g(T(x))}.
\end{equation}
%
We also have from the proof of \cite[Theorem 8.2]{gutierrez-mawi:refractorwithlossofenergy} when $\kappa=1$ and $n=3$ that
$$\det (DT) = \dfrac{1}{w \sqrt{1-|x|^2}}\, \det (D^2\rho + C^{-1}\,B)\, \det (C)$$
with
\begin{align*}
C(x) &= x\otimes D_p w +h\,I+D\rho \otimes D_p h\\
B(x) &= w\,I+x\otimes D_x w+ w_z\, x \otimes D\rho +D\rho \otimes D_x h +h_z\, D\rho \otimes D\rho\\
\det (C)&= h^2\,(1-|x|^2)\,\left(1-h^{-1}\left(\dfrac{\rho}{1-|x|^2}\,x-D\rho\right)\cdot D_p h\right)
\end{align*}
where $h:=h(x,\rho(x),D\rho(x)),w:=w(x,\rho(x),D\rho(x))$, and 
$$h(x,z,p)=\dfrac{2\,z}{z^2+|p|^2-(p\cdot x)^2}, \qquad w(x,z,p)=1-h(x,z,p)(z+p\cdot x).$$
Replacing these values in equation (\ref{parasimplequ}), and using \eqref{eq:formulasfornormal} we conclude that $\rho$ satisfies the following Monge-Amp\`ere type equation
\begin{align}\label{paraMongEqu}
&|\det(D^2\rho +C^{-1} B)|\\
&\leq \dfrac{f(x)\,|w|}{g(T(x))\,(1-|x|^2)\,h^2\left|1-h^{-1}\left(\dfrac{\rho}{1-|x|^2}x-D\rho\right)
\cdot D_p h\right|\rho\sqrt{\rho^2+|D\rho|^2-(D\rho \cdot x)^2}}.\notag
\end{align}

\providecommand{\bysame}{\leavevmode\hbox to3em{\hrulefill}\thinspace}
\providecommand{\MR}{\relax\ifhmode\unskip\space\fi MR }
\providecommand{\MRhref}[2]{%
  \href{http://www.ams.org/mathscinet-getitem?mr=#1}{#2}
}
\providecommand{\href}[2]{#2}

\end{document}